\documentclass[11pt]{article}

\usepackage{amsmath,amsthm,amsfonts,amssymb}
\usepackage[margin=2.2cm]{geometry}
\usepackage{thmtools}
\usepackage{hyperref}
\usepackage{enumitem}
\usepackage{authblk}
\usepackage{orcidlink}

\newtheorem{thm}{Theorem}[section]
\newtheorem{lem}{Lemma}[thm]
\newtheorem{prop}{Proposition}[section]

\theoremstyle{definition}
\newtheorem{defin}{Definition}[thm]
\newtheorem{obs}{Observation}

\newtheorem{remark}{Remark}[thm]
\usepackage{bbold}
\usepackage{tikz}
\usetikzlibrary{positioning}
\usepackage{xfrac}
\usepackage{quiver}
\usepackage{graphicx}
\usepackage{enumitem}
\newcommand{\Crit}{\operatorname{Crit}}
\newcommand{\Ker}{\operatorname{Ker}}
\newcommand{\GS}{\operatorname{GS}}

\newcommand{\MV}{\operatorname{MV}}

\title{An effective Mayer-Vietoris Theorem for discrete Morse homology} 
\author[a]{Sajal Mukherjee~\orcidlink{0009-0004-6959-4334}\thanks{\texttt{sajal.mukherjee@tcgcrest.org}}}

\author[b]{Pritam Chandra Pramanik~\orcidlink{0009-0000-8332-3347}\thanks{\texttt{pritam.pramanik.80@tcgcrest.org}}}

\author[c]{Arundhati Rakshit~\orcidlink{0009-0002-8209-8584}\thanks{\texttt{arundhati.rakshit.124@tcgcrest.org}}}

\affil[a,b,c]{\small Institute for Advancing Intelligence (IAI), TCG CREST, Kolkata--700091, West Bengal, India}
\affil[a,c]{\small Mathematical \& Information Science, Academy of Scientific and Innovative Research (AcSIR), Ghaziabad--201002, Uttar Pradesh, India}

\date{}

\begin{document}
	\maketitle	
	
	\begin{abstract}
		 The Mayer-Vietoris theorem is known for its wide applications, especially in determining homology. In fact, this theorem provides us with a long exact sequence, where the underlying homology groups  fit in. However, this theorem does not provide an explicit way to compute homology. In this paper we prove an ``effective" version of the Mayer-Vietoris theorem using discrete Morse theory. Suppose, we have a Mayer-Vietoris type setup, i.e., let $X$ be a simplicial complex and $A$ and $B$ be two subcomplexes of $X$, such that $A \cup B=X$. Moreover, let $\mathcal{W}_A$, $\mathcal{W}_{B}$ and $\mathcal{W}_{A \cap B}$ be gradient vector fields on $A$, $B$ and $A \cap B$ respectively (which need not be ``coherent", i.e., they do not need to coincide on their intersection). Then, the main theorem of our paper provides an explicit way to compute the homology groups of $X$, using the combinatorial information regarding the trajectories of the aforementioned gradient vector fields, we do not even need to know the individual homology groups $H_{*}(A)$, $H_{*}(B)$ and $H_{*}(A \cap B)$. In principle, the homology of $X$ can always be computed explicitly using our theorem irrespective of the choice of the gradient vector fields. Further, if we choose the subcomplexes $A$ and $B$ wisely so that each of $A$, $B$ and $A \cap B$ admits an efficient gradient vector field, then the computation of the homology groups is considerably reduced.
	\end{abstract}
	
	\textbf{Keywords:}  simplicial complex, discrete Morse theory, chain complex, effective homology computation, Mayer-Vietoris sequence.
	
	\textit{MSC 2020:} 57Q70 (primary), 05E45, 55U15.
	
	\begin{section}{Introduction}\label{intro}
	
	 The Mayer-Vietoris theorem \cite{Munkres} is a powerful tool in algebraic topology, primarily used to determine the homology of simplicial complexes (in general, topological spaces. However, throughout the paper, we work in simplicial setting, as the main focus of our paper is explicit computation and simplicial complexes are more amenable to coding). Particularly, its utility lies in calculating the homology of a large, complicated simplcial complex by ``breaking" it into smaller and easier to handle subcomplexes. To be more precise, if $X$ is a simplicial complex and $A$ and $B$ are two subcomplexes of $X$ such that $A \cup B=X$. Then the Mayer-Vietoris theorem (in simplicial setting) states that the homology groups $H_{*}(X)$, $H_{*}(A)$, $H_{*}(B)$ fit in the following long exact sequence (w.r.t. some appropriate maps $f_{*}$, $g_{*}$ and $h_{*}$).
	
	$$ \dots \rightarrow H_n(A \cap B)  \xrightarrow{f_n} H_n(A) \oplus H_n(B) \xrightarrow{g_n} H_n(X) \xrightarrow{h_n} H_{n-1}(A \cap B) \rightarrow \dots H_0(X) \rightarrow 0.$$
	
	However, despite its multifarious uses in algebraic topology, a notable limitation of the Mayer-Vietoris theorem lies in its computational aspect. The Mayer-Vietoris theorem simply provides us with the above long exact sequence where the homology groups $H_{*}(X)$, $H_{*}(A)$, $H_{*}(B)$ fit in, from where we can compute the homology of $X$ in some limited cases. In general, it does not provide an explicit way to compute the homology groups of $X$. In this paper, we provide an ``effective" version of the Mayer-Vietoris theorem using discrete Morse theory. To state our result, we need the following notions from discrete Morse theory.\\
	
	 Let $X$ be a simplicial complex. Let us denote a simplex $\sigma$ of dimension $q$ as $\sigma^{(q)}$.
	
	A \emph{discrete vector field} $\mathcal{V}$ on $X$ is defined as a collection of pairs of simplices $\{(\sigma^{(q-1)},\tau^{(q)})\mid \sigma, \tau \in X, \sigma \subseteq \tau\}$ such that every simplex in $X$ appears in atmost one pair.\\

	\begin{defin}{(Forman trajectory)}
		A \emph{Forman $\mathcal{V}$-trajectory} (or simply a $\mathcal{V}$- trajectory) is defined as sequence of simplices of the following form.
		$$\tau_0^{(q)}, \sigma_1^{(q-1)}, \tau_1^{(q)}, \dots \sigma_k^{(q-1)}, \tau_k^{(q)},$$
		
		 where,  $(\sigma_{i}^{(q-1)}, \tau_{i}^{(q)}) \in \mathcal{V}$, $ \sigma_{i}^{(q-1)} \subseteq \tau_{i-1}^{(q)}$, $(\sigma_i^{(q-1)}, \tau_{i-1}^{(q)}) \notin \mathcal{V}$ for all $i\in [k]$.
		
		 We will refer to a $\mathcal{V}$-trajectory as simply a trajectory whenever the gradient vector field is clear from the context. A trajectory is said to be \emph{closed} if $\tau_0 = \tau_k$ for $k > 1$. 
		
		 Now we define an \emph{extended Forman $\mathcal{V}$-trajectory} (or, \emph{extended $\mathcal{V}$-trajectory} )  as a sequence of simplices of the following form.

		$$\tau_0^{(q)}, \sigma_1^{(q-1)}, \tau_1^{(q)}, \dots \sigma_k^{(q-1)}, \tau_k^{(q)}, \sigma_{k+1}^{(q-1)},$$
		 where  $(\sigma_{i}^{(q-1)}, \tau_{i}^{(q)}) \in \mathcal{V}$ for all $i \in [k]$,  $ \sigma_{i}^{(q-1)} \subseteq \tau_{i-1}^{(q)}$ and $(\sigma_i^{(q-1)}, \tau_{i-1}^{(q)}) \notin \mathcal{V}$ for all $i\in [k+1]$.

		 Here, $\tau_{0}$ is said to be the \emph{initial simplex} and $\sigma_{k+1}$ is said to be the \emph{terminal simplex} of the extended trajectory. An (extended) trajectory is said to be \emph{non-trivial} if $k > 0$. 
		 
		 From here onwards, we refer to an extended trajectory as simply trajectory, for the sake of simplicity, and note that this will not cause any terminological ambiguity.
		 
	\end{defin}

	A discrete vector field $\mathcal{V}$ is said to be \emph{acyclic} if there exists no non-trivial closed $\mathcal{V}$-trajectories. A discrete vector field $\mathcal{V}$ is said to be a \emph{gradient vector field} if $\mathcal{V}$ is acyclic.
	
	Given a gradient vector field $\mathcal{V}$ on a simplicial complex, a simplex is said to be \emph{$\mathcal{V}$-critical} if it does not appear in $\mathcal{V}$. We will refer to them as simply critical simplices when the gradient vector field is clear from the context. We denote the set of all $q$-dimensional $\mathcal{V}$-critical simplices in $X$ as $\Crit_q^{\mathcal{V}}(X)$.


	Let $X$ be a simplicial complex with a gradient vector field $\mathcal{V}$.
	
	\begin{defin}
		
		 Let $\gamma: \tau_{0}^{(q)}, \sigma_{1}^{(q-1)}, \tau_{1}^{(q)}, \dots, \sigma_{k}^{(q-1)}, \tau_{k}^{(q)},\sigma_{k+1}^{(q-1)}$ be a $\mathcal{V}$-trajectory in X. Then the weight of $\gamma$, denoted as $w(\gamma)$ is defined as,
		$$ w(\gamma):= \left(\prod_{i=0}^{k-1} -\langle  \tau_{i}, \sigma_{i+1},\rangle \langle \tau_{i+1}, \sigma_{i+1} \rangle \right)\langle \tau_k, \sigma_{k+1}\rangle.$$
		
	\end{defin}
	 The set of all $\mathcal{V}$-trajectories with $\tau^{(q)}$ as the initial simplex and $\sigma^{(q-1)}$ as the terminal simplex is denoted as $\Gamma(\tau, \sigma)$.

	For a simplicial complex $X$ and a gradient vector field $\mathcal{V}$ on it, we define the Thom-Smale chain group $C^{\mathcal{V}}_{q}(X, \mathbb{Z})$ as the abelian group generated by $\Crit_{q}^{\mathcal{V}}(X)$ over $\mathbb{Z}$ for each $q \geq 0$, i.e.,
	
	$$ C^{\mathcal{V}}_{q}(X, \mathbb{Z})= \mathbb{Z}\langle \Crit^{V}_{q}(X) \rangle  \text{, for each } q \geq 0.$$ 
	
	From here onwards, we work using the coefficent ring as $\mathbb{Z}$. So,  whenever we refer to chain groups and homology groups $K$, instead of $K_{\#}(X, \mathbb{Z})$, we simply write $K_{\#}(X)$.

	Now, we define the Thom-Smale boundary operator $\partial_{q+1}^\mathcal{V}: C^{\mathcal{V}}_{q+1}(X) \rightarrow C^{\mathcal{V}}_{q}(X)$ as,
	$$ \partial_{q+1}^\mathcal{V}(\tau) = \sum_{\sigma \in \Crit_{q}^{\mathcal{V}}(X)}\left(\sum_{P \in \Gamma(\tau,\sigma)}w(P)\right)\sigma \text{, for each } \tau \in \Crit_{q+1}^{\mathcal{V}}(X) \text{, } q  \geq 0.$$
	 and extend it linearly to the whole of $C^{\mathcal{V}}_{q}(X)$. Thus the Thom-Smale chain complex $(C_{\#}^{\mathcal{V}}(X), \partial_{\#}^{\mathcal{V}})$ runs as,
	$$ \dots \rightarrow C^{\mathcal{V}}_{q+1}(X) \xrightarrow{\partial^{\mathcal{V}}_{q+1}}  C^{\mathcal{V}}_{q}(X) \xrightarrow{\partial_q^{\mathcal{V}}} C^{\mathcal{V}}_{q-1}(X) \rightarrow \dots \rightarrow C^{\mathcal{V}}_0(X) \rightarrow 0. $$
	
	 The Thom-Smale homology group, $H^{\mathcal{V}}_{q}(X):= \frac{\Ker(\partial_{q}^{\mathcal{V}})}{Im (\partial^{\mathcal{V}}_{q+1})}$, for each $q \geq 0$.

    The following theorem of Forman \cite{Forman1, Forman2} is of utmost importance. 
	
	\begin{thm}[\cite{Forman1, Forman2}]\label{hom}
	The Thom-Smale chain complex of $X$ (with respect to a gradient vector field $\mathcal{V}$ on $X$), $(C^\mathcal{V}_{\#}(X),\partial^\mathcal{V}_{\#})$  is homotopy equivalent to the simplicial chain complex of $X$, $(C_{\#}(X), \partial_{\#})$. Hence $H^{\mathcal{V}}_{\#}(X) \cong H_{\#}(X)$.
    \end{thm}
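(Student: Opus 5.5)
I would prove Theorem~\ref{hom} by algebraic discrete Morse theory: reduce the simplicial chain complex one matched pair at a time, and check that the reduced complex is exactly the Thom--Smale complex. \emph{Setup.} Fix an orientation on every simplex of $X$, so that $\partial_q\tau=\sum_{\sigma}\langle\tau,\sigma\rangle\sigma$ with $\langle\tau,\sigma\rangle\in\{0,\pm1\}$. For a pair $(\sigma,\tau)\in\mathcal{V}$ with $\sigma$ a facet of $\tau$, the incidence $\langle\tau,\sigma\rangle=\pm1$ is a unit in $\mathbb{Z}$. This is what makes Gaussian elimination possible over the integers.

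\emph{Step 1: the basic reduction lemma.} Let $(C_\#,\partial)$ be a free chain complex with a chosen basis, and suppose some basis elements $\tau\in C_{q}$ and $\sigma\in C_{q-1}$ satisfy $\langle\partial\tau,\sigma\rangle=\pm1$. Then $C$ is chain homotopy equivalent to the complex $C'$ obtained as follows:
\begin{itemize}
\item delete $\tau$ and $\sigma$ from the basis;
\item replace $\partial_q$ on each remaining basis element $\tau'\in C_q$ by
\[
\partial'\tau'=\partial\tau'-\frac{\langle\partial\tau',\sigma\rangle}{\langle\partial\tau,\sigma\rangle}\,\partial\tau,
\]
projected away from $\sigma$;
\item leave all other differentials unchanged.
\end{itemize}
To prove this, I will split off the contractible subcomplex $0\to\langle\tau\rangle\xrightarrow{\cong}\langle\partial\tau\rangle\to0$ via a change of basis. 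I will write down the inclusion $C'\to C$, the projection $C\to C'$, and the chain homotopy explicitly, and verify the homotopy identities directly.

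\emph{Step 2: iterate along the gradient vector field.} Acyclicity of $\mathcal{V}$ gives a linear order on the pairs: the modified Hasse diagram, with the arrows of $\mathcal{V}$ reversed, is a DAG. Equivalently, there is a discrete Morse function compatible with $\mathcal{V}$, or $X$ can be built by elementary collapses interleaved with critical-cell attachments. I will cancel the pairs in an order consistent with this. Finiteness of $X$ ensures the process terminates. Since homotopy equivalences compose, $C_\#(X)$ is homotopy equivalent to the final complex, whose basis is exactly $\Crit^{\mathcal{V}}_\#(X)$.

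\emph{Step 3 (main obstacle): identify the final differential with $\partial^{\mathcal{V}}$.} I will show by induction on the number of cancelled pairs that the coefficient of $\sigma'$ in the current $\partial\tau'$ equals a sum of weights of paths of the following form: paths from $\tau'$ to $\sigma'$ that alternate down-steps (face incidences) and up-steps through already-cancelled pairs. Each up-step through a pair $(\sigma_i,\tau_i)$ contributes the factor $-\langle\tau_{i-1},\sigma_i\rangle\langle\tau_i,\sigma_i\rangle^{-1}=-\langle\tau_{i-1},\sigma_i\rangle\langle\tau_i,\sigma_i\rangle$, since the incidence is $\pm1$. This is precisely the factor in $w(\gamma)$. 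The sign $-1$ and the product come straight from the formula in Step 1, and cancelling a new pair simply concatenates paths through it.

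The delicate points are two:
\begin{itemize}
\item The condition $(\sigma_i,\tau_{i-1})\notin\mathcal{V}$ must be accounted for: one cannot step down into one's own partner.
\item Cancellations must not create new paths involving not-yet-cancelled pairs. This is where the ordering from acyclicity is used, ensuring each trajectory is counted exactly once.
\end{itemize}
Once every pair is cancelled, the surviving sums run over all $\mathcal{V}$-trajectories in $\Gamma(\tau,\sigma)$, which gives $\partial^{\mathcal{V}}$. The isomorphism $H^{\mathcal{V}}_\#(X)\cong H_\#(X)$ then follows because homotopy-equivalent chain complexes have isomorphic homology.
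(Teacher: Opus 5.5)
Your proof is correct, but it is not the argument behind this statement. The paper gives no proof of Theorem~\ref{hom}; it imports the result from Forman, whose route is geometric and dynamical. Via collapses along a compatible discrete Morse function, he shows that $X$ has the homotopy type of a CW complex with one $q$-cell per critical $q$-simplex. He obtains the chain-level statement and the path formula from the discrete gradient flow $\Phi=\mathrm{Id}+\partial V+V\partial$, where $V$ is the chain-level operator of $\mathcal{V}$ and is itself a chain homotopy between $\Phi$ and $\mathrm{Id}$. He identifies the $\Phi$-invariant chains with the span of the critical simplices and expands $\Phi^{\infty}$ along gradient paths.

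Your route is the algebraic Morse theory of Sk\"oldberg, Kozlov and J\"ollenbeck--Welker. It needs no Morse function and no topology, works for any based free chain complex over any ring in which the matched incidences are units, and gives explicit homotopy-equivalence data. What it does not give is Forman's statement about the homotopy type of $X$ itself. In this paper the extra generality is actually the more useful feature. Apply your reduction to the mapping cone of $C_\#(A\cap B)\to C_\#(A)\oplus C_\#(B)$, $c\mapsto(-c,c)$, with differential $(a,b,c)\mapsto(\partial a-c,\ \partial b+c,\ -\partial c)$; this cone is quasi-isomorphic to $C_\#(X)$ by the Mayer--Vietoris short exact sequence. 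Use the matching $\mathcal{W}_{\bar A}\cup\mathcal{W}_{\bar B}\cup\mathcal{W}_{\overline{A\cap B}}$, which is acyclic because each block is and paths can pass from the $A\cap B$ block into the $A$ or $B$ block but never back. This yields exactly $\mathcal{D}_\#(X)$ with the weights $w_M$, signs included, so the main theorem would follow without constructing $\widetilde{X}$.

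Two points to tighten. First, in Step~1 the differential $\partial_{q+1}$ is not literally unchanged. It must be followed by the projection that drops the $\tau$-coordinate, and $\partial_{q-1}$ must be restricted to the surviving basis. Coefficientwise on surviving generators it is unchanged, which is all Step~3 uses.

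Second, acyclicity is really needed in a slightly different place from where you put it. Paths through not-yet-cancelled pairs never arise, by construction. What must be checked is that when you reach a pair $(\sigma,\tau)$, the coefficient of $\sigma$ in the \emph{current} $\partial\tau$ is still $\pm1$, because Step~1 is applied to the current complex, not the original one. By your inductive formula, that coefficient is $\langle\tau,\sigma\rangle$ plus the weights of nontrivial paths $\tau,\sigma_1,\tau_1,\dots,\tau_k,\sigma$ through already cancelled pairs. Each such path closes up with $(\sigma,\tau)$ into a nontrivial closed $\mathcal{V}$-trajectory, so there are none. Likewise, a path meeting the newly cancelled pair twice would contain a closed trajectory. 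That is why the update $c(x,y)\mapsto c(x,y)-c(x,\sigma)\langle\tau,\sigma\rangle c(\tau,y)$ of the current coefficients counts every path through the new pair exactly once. Both uses of acyclicity hold for any cancellation order, so the linear extension in Step~2 is unnecessary.
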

	
	We note here that the power of discrete Morse theory rests on its ability to simplify the computation of simplicial homology by reducing the number of generators of the chain groups (see \cite{Matching}). The computation now relies on the information of only the critical simplices instead of all the simplices. 
	However the feasibility of the computation of Morse homology depends crucially on finding an efficient gradient vector field (i.e., one with lower number of critical simplices), on the underlying simplicial complex $X$, which is very often a challenging task (see \cite{Matching}). In fact, it is an $NP$-hard problem to determine an optimal gradient vector field on a simplicial complex (see \cite{NP1, NP2, NP3}).
	
	In this paper, we formulate a Forman type theorem,  (similar to \autoref{hom}) in a Mayer-Vietoris type setup, as follows.

	Let $X$ be a simplicial complex with two subcomplexes $A$ and $B$ such that $A \cup B=X$. Suppose, $\mathcal{W}_A$, $\mathcal{W}_{B}$, $\mathcal{W}_{A \cap B}$ be gradient vector fields on $A$, $B$ and $A \cap B$ respectively. The gradient vector fields on these subcomplexes do not have to be ``coherent", i.e., they do not need to coincide on the intersection nor do they need to be acyclic as a whole, on the entire simplicial complex $X$. Then, using the gradient vector fields $\mathcal{W}_A$, $\mathcal{W}_B$, and $\mathcal{W}_{A \cap B}$, we can determine the homology groups of $X$. The relaxation on the requirement that the gradient vector fields $\mathcal{W}_{A}$, $\mathcal{W}_{B}$ and $\mathcal{W}_{A \cap B}$ need to be coherent provides more flexibility to our theorem.
	

	 Suppose, $X$ is a simplicial complex and $Y$ is a subcomplex of $X$. We define a simplicial complex $\bar{Y}$ as,
	
	$$ V(\bar{Y}):= \{\bar{y}_1, \dots ,\bar{y}_n\}, \text{ where } V(Y)= \{y_1, \dots ,y_n\},$$
	$$ \text{and } \mathcal{F}(\bar{Y}):= \{\{\bar{y}_{i_1}, \dots ,\bar{y}_{i_k}\} \mid \{y_{i_1}, \dots , y_{i_k}\} \in \mathcal{F}(Y)\}.$$
	
	 We call $\bar{Y}$ a \emph{copy} of $Y$.
	
	 For each $\sigma= \{y_{i_0}, \dots , y_{i_k}\} \in Y$, we denote $\{\bar{y}_{i_0}, \dots , \bar{y}_{i_k}\} \in \mathcal{F}(\bar{Y})$ as $\sigma_{\bar{Y}}$.\\
	 Now, we consider a simplicial complex $X$, with subcomplexes $A$ and $B$ such that $A \cup B=X$. Let $\bar{A}, \bar{B}$ and $\overline{A \cap B}$ be \emph{disjoint copies} (i.e., $V(\bar{A})$, $V(\bar{B})$ and $V(\overline{A \cap B})$ are pairwise disjoint) of $A, B$ and $A \cap B$ respectively. Let $\mathcal{W}_{\bar{A}}$, $\mathcal{W}_{\bar{B}}$ and $\mathcal{W}_{\overline{A \cap B}}$ be gradient vector fields on $\bar{A}$, $\bar{B}$, $\overline{A \cap B}$ respectively. Let,
		
		$$ D_0(X):= \Crit_0^{\mathcal{W}_{\bar{A}}}(\bar{A}) \cup \Crit_0^{\mathcal{W}_{\bar{B}}}(\bar{B}), \text{ and}$$
		 $$ D_q(X):= \Crit_q^{\mathcal{W}_{\bar{A}}}(\bar{A}) \cup \Crit_q^{\mathcal{W}_{\bar{B}}}(\bar{B}) \cup \Crit_{q-1}^{\mathcal{W}_{\overline{A \cap B}}}(\overline{A \cap B}), q \geq 1 .$$
	 Next, we define our main object of interest, i.e., a \emph{Mayer-Vietoris trajectory} from a simplex $ \beta \in D_q(X)$ to a simplex $\alpha \in D_{q-1}(X)$.
	\begin{defin} (Mayer-Vietoris trajectory)
		Let $\beta \in D_q(X)$, $\alpha \in D_{q-1}(X)$. Then a \emph{Mayer-Vietoris trajectory} (in short, an MV trajectory) $P$ from $\beta$ to $\alpha$ is defined for the following five cases.
		
	\begin{enumerate}
		\item Let $\beta \in \Crit_q^{\mathcal{W}_{\bar{A}}}(\bar{A})$, and $ \alpha \in \Crit_{q-1}^{\mathcal{W}_{\bar{A}}}(\bar{A}) $. An \emph{MV trajectory} $P$ from $\beta$ to $\alpha$ is defined as a sequence of simplices. 
			$$(\beta=)~(\tau_0)_{\bar{A}}^{(q)}, (\sigma_1)_{\bar{A}}^{(q-1)}, (\tau_1)_{\bar{A}}^{(q)}, \dots ,(\sigma_k)_{\bar{A}}^{(q-1)}, (\tau_k)_{\bar{A}}^{(q)}, (\sigma_{k+1})_{\bar{A}}^{(q-1)}~(=\alpha),$$ 
			
			where, $((\sigma_i)_{\bar{A}}, (\tau_i)_{\bar{A}}) \in \mathcal{W}_{\bar{A}}$, for each $i \in [k]$, $(\sigma_{i})_{\bar{A}} \subseteq (\tau_{i-1})_{\bar{A}}$ and $((\sigma_i)_{\bar{A}}, (\tau_{i-1})_{\bar{A}}) \notin \mathcal{W}_{\bar{A}}$, for each $i \in [k+1]$.

			 The weight of such a trajectory is given by,
			
			$$w_M(P):= \left(\prod_{i=0}^{k-1}-\langle \tau_i, \sigma_{i+1}\rangle \langle \tau_{i+1},\sigma_{i+1} \rangle \right) \langle \tau_k, \sigma_{k+1}\rangle.$$
			
		\item  Let $\beta \in \Crit_q^{\mathcal{W}_{\bar{B}}}(\bar{B})$, and $\alpha \in \Crit_{q-1}^{\mathcal{W}_{\bar{B}}}(\bar{B}) $. An \emph{MV trajectory} $P$ from $\beta$ to $\alpha$ is defined as a sequence of simplices. 
		$$(\beta=)~(\tau_0)_{\bar{B}}^{(q)}, (\sigma_1)_{\bar{B}}^{(q-1)}, (\tau_1)_{\bar{B}}^{(q)}, \dots ,(\sigma_k)_{\bar{B}}^{(q-1)}, (\tau_k)_{\bar{B}}^{(q)}, (\sigma_{k+1})_{\bar{B}}^{(q-1)}~(=\alpha),$$ 
		
		 where, $((\sigma_i)_{\bar{B}}, (\tau_i)_{\bar{B}}) \in \mathcal{W}_{\bar{B}}$, for each $i \in [k]$, $(\sigma_{i})_{\bar{B}} \subseteq (\tau_{i-1})_{\bar{B}}$ and  $((\sigma_i)_{\bar{B}}, (\tau_{i-1})_{\bar{B}}) \notin \mathcal{W}_{\bar{B}}$ for each $i \in [k+1]$.\\

		 The weight of such a trajectory is defined as,
		
		$$w_M(P):= \left(\prod_{i=0}^{k-1}-\langle \tau_i, \sigma_{i+1}\rangle \langle \tau_{i+1},\sigma_{i+1} \rangle \right) \langle \tau_k, \sigma_{k+1}\rangle.$$
		
			\item  Let $\beta \in \Crit_{q-1}^{\mathcal{W}_{\overline{A \cap B}}}(\overline{A \cap B})$, and $ \alpha \in \Crit_{q-2}^{\mathcal{W}_{\overline{A \cap B}}}(\overline{A \cap B}) $. An \emph{MV trajectory} $P$ from $\beta$ to $\alpha$ is defined as a sequence of simplices, 
			$$(\beta=)~(\tau_0)_{\overline{A \cap B}}^{(q-1)}, (\sigma_1)_{\overline{A \cap B}}^{(q-2)}, (\tau_1)_{\overline{A \cap B}}^{(q-1)}, \dots ,(\sigma_k)_{\overline{A \cap B}}^{(q-2)}, (\tau_k)_{\overline{A \cap B}}^{(q-1)}, (\sigma_{k+1})_{\overline{A \cap B}}^{(q-2)}~(= \alpha),$$ 
			
			 where, $((\sigma_i)_{\overline{A \cap B}}, (\tau_i)_{\overline{A \cap B}}) \in \mathcal{W}_{\overline{A \cap B}}$, for each $i \in [k]$, $(\sigma_{i})_{\overline{A \cap B}} \subseteq (\tau_{i-1})_{\overline{A \cap B}}$ and \\ $((\sigma_i)_{\overline{A \cap B}}, (\tau_{i-1})_{\overline{A \cap B}}) \notin \mathcal{W}_{\overline{A \cap B}}$ for each $i \in [k+1]$.

			 The weight of such a trajectory is defined as,
			$$w_M(P):=-\left(\prod_{i=0}^{k-1}-\langle \tau_i, \sigma_{i+1}\rangle \langle \tau_{i+1},\sigma_{i+1} \rangle\right)\langle \tau_k, \sigma_{k+1}\rangle.$$
			
			\item Let $\beta \in \Crit_{q-1}^{\mathcal{W}_{\overline{A \cap B}}}(\overline{A \cap B})$, and $ \alpha \in \Crit_{q-1}^{\mathcal{W}_{\bar{A}}}(\bar{A}) $. An \emph{MV trajectory} $P$ from $\beta$ to $\alpha$ is defined as a sequence of simplices,
			\begin{equation*}
				\begin{aligned}
					&(\beta=)~(\tau_0)_{\overline{A \cap B}}^{(q-1)}, (\sigma_1)_{\overline{A \cap B}}^{(q-2)}, (\tau_1)_{\overline{A \cap B}}^{(q-1)}, \dots , (\sigma_p)_{\overline{A \cap B}}^{(q-2)}, (\tau_p)_{\overline{A \cap B}}^{(q-1)}, (\tau_p)_{\bar{A}}^{(q-1)}, (\alpha_{p})_{\bar{A}}^{(q)}, \dots ,(\tau_{p+l-1})_{\bar{A}}^{(q-1)},\\ & (\alpha_{p+l-1})_{\bar{A}}^{(q)}, (\tau_{p+l})_{\bar{A}}^{(q-1)}~(= \alpha),
				\end{aligned}
			\end{equation*}
			where, $((\sigma_i)_{\overline{A \cap B}}, (\tau_i)_{\overline{A \cap B}}) \in \mathcal{W}_{\overline{A \cap B}}$, for each $i \in [p]$, $(\sigma_{i})_{\overline{A \cap B}} \subseteq (\tau_{i-1})_{\overline{A \cap B}}$ and $((\sigma_i)_{\overline{A \cap B}}, (\tau_{i-1})_{\overline{A \cap B}}) \notin \mathcal{W}_{\overline{A \cap B}}$ for each $i \in [p]$, $((\tau_i)_{\bar{A}},(\alpha_i)_{\bar{A}}) \in \mathcal{W}_{\bar{A}}$, for $i \in \{p, \dots , (p+l-1)\}$, $(\tau_{i})_{\bar{A}} \subseteq (\alpha_{i-1})_{\bar{A}}$ and $((\tau_{i})_{\bar{A}}, (\alpha_{i-1})_{\bar{A}}) \notin \mathcal{W}_{\bar{A}}$ for each $i \in \{p+1, \dots ,p+l\}$, $p,l \geq 0$.

			 The weight of such a trajectory is given by,
			
			$$w_M(P):= - \left(\prod_{i=0}^{p-1}-\langle \tau_i, \sigma_{i+1} \rangle \langle \tau_{i+1}, \sigma_{i+1}\rangle \right) \left(\prod_{i=p}^{p+l-1}-\langle \alpha_i, \tau_i\rangle \langle \alpha_i, \tau_{i+1} \rangle\right).$$
			
			\item   Let $\beta \in \Crit_{q-1}^{\mathcal{W}_{\overline{A \cap B}}}(\overline{A \cap B})$, and $ \alpha \in \Crit_{q-1}^{\mathcal{W}_{\bar{B}}}(\bar{B}).$ An \emph{MV trajectory} $P$ from $\beta$ to $\alpha$ is defined as a sequence of simplices,
				\begin{equation*}
				\begin{aligned}
					&(\beta=)~(\tau_0)_{\overline{A \cap B}}^{(q-1)}, (\sigma_1)_{\overline{A \cap B}}^{(q-2)}, (\tau_1)_{\overline{A \cap B}}^{(q-1)}, \dots , (\sigma_p)_{\overline{A \cap B}}^{(q-2)}, (\tau_p)_{\overline{A \cap B}}^{(q-1)}, (\tau_p)_{\bar{B}}^{(q-1)}, (\alpha_{p})_{\bar{B}}^{(q)}, \dots ,(\tau_{p+l-1})_{\bar{B}}^{(q-1)},\\ & (\alpha_{p+l-1})_{\bar{B}}^{(q)}, (\tau_{p+l})_{\bar{B}}^{(q-1)}~(= \alpha),
				\end{aligned}
			\end{equation*}
			where, $((\sigma_i)_{\overline{A \cap B}}, (\tau_i)_{\overline{A \cap B}}) \in \mathcal{W}_{\overline{A \cap B}}$, for each $i \in [p]$, $(\sigma_{i})_{\overline{A \cap B}} \subseteq (\tau_{i-1})_{\overline{A \cap B}}$ and $((\sigma_i)_{\overline{A \cap B}}, (\tau_{i-1})_{\overline{A \cap B}}) \notin \mathcal{W}_{\overline{A \cap B}}$ for each $i \in [p]$, $((\tau_i)_{\bar{B}},(\alpha_{i})_{\bar{A}}) \in \mathcal{W}_{\bar{B}}$, for $i \in \{p, \dots , (p+l-1)\}$, $(\tau_{i})_{\bar{B}} \subseteq (\alpha_{i-1})_{\bar{B}}$ and $((\tau_i)_{\bar{B}}, (\alpha_{i-1})_{\bar{B}}) \notin \mathcal{W}_{\bar{B}}$ for each $i \in \{p+1, \dots ,p+l\}$, $p, l \geq 0$. 
		
			 The weight of such a trajectory is given by,
			
			$$w_M(P):= \left(\prod_{i=0}^{p-1}-\langle \tau_i, \sigma_{i+1} \rangle \langle \tau_{i+1}, \sigma_{i+1}\rangle \right)\left(\prod_{i=p}^{p+l-1}-\langle \alpha_i, \tau_i\rangle \langle \alpha_i, \tau_{i+1} \rangle\right).$$
			
	\end{enumerate}

	\end{defin}
	For any $\beta \in D_q(X)$ and $\alpha \in D_{q-1}(X)$, we denote the set of all MV trajectories from $\beta$ to $\alpha$ as $\MV(\beta, \alpha)$. 
	
     For a non-negative integer $q$, we define $\mathcal{D}_q(X)$ as, $$\mathcal{D}_q(X):= \mathbb{Z} \langle D_{q}(X)\rangle.$$

		 For a non-negative integer $q$, we define a map $\partial^{\mathcal{D}}_{q+1} : \mathcal{D}_{q+1}(X) \longrightarrow \mathcal{D}_q(X)$ on the set of generators as follows. For each $\beta \in D_{q+1}(X)$, we define, 
		
	$$ \partial_{q+1}^{\mathcal{D}}(\beta) = \sum_{\alpha \in D_q(X)} \left( \sum_{P \in \MV(\beta, \alpha)}w_M(P) \right) \alpha,$$
	 and extend it linearly to $\mathcal{D}_{q+1}(X)$.

		 Now we are ready to state our main result.
		
		\begin{thm}\label{main}
			Let $X$ be a $d$-dimensional simplicial complex with subcomplexes $A$ and $B$ such that $A \cup B=X$. If $\mathcal{W}_{\bar{A}}$, $\mathcal{W}_{\bar{B}}$ and $\mathcal{W}_{\overline{A \cap B}}$ are gradient vector fields on  $\bar{A}$, $\bar{B}$ and $\overline{A \cap B}$  respectively, where $\bar{A}$, $\bar{B}$ and $\overline{A \cap B}$ are disjoint copies of $A$, $B$, and $A \cap B$ respectively (as defined before), then $(\mathcal{D}_q(X), \partial^{\mathcal{D}}_q)_{q \geq 0}$ is a chain complex, and $H^{\mathcal{D}}_q(X) \cong H_q(X)$, for each $q=0, \dots ,d$, where $H_q(X)$ denotes the $q$-th simplicial homology group of $X$ and $H^{\mathcal{D}}_q(X)$ denotes the $q$-th homology group of $\mathcal{D}_q(X)$.
		\end{thm}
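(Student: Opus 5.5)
The plan is to recognize $(\mathcal{D}_\#(X),\partial^{\mathcal{D}}_\#)$ as the Morse complex of a suitable algebraic gradient vector field on the mapping cone of the Mayer--Vietoris map. Concretely, I will define a chain complex $M_\#$ by
$$M_q := C_q(\bar A)\oplus C_q(\bar B)\oplus C_{q-1}(\overline{A\cap B}),$$
with differential
$$\partial^M(a,b,c)=\bigl(\partial a - \iota_A c,\ \partial b + \iota_B c,\ -\partial c\bigr),$$
where $\iota_A,\iota_B$ send $\sigma_{\overline{A\cap B}}$ to $\sigma_{\bar A}$ and $\sigma_{\bar B}$. The signs will be fixed to agree with the weights in the definition: case 3 carries an extra minus, case 4 an extra minus and case 5 none. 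I will adjust the sign convention of the cone if needed; up to isomorphism this does not affect homology. $M_\#$ is the mapping cone of $f=(-\iota_A,\iota_B)\colon C_\#(A\cap B)\to C_\#(A)\oplus C_\#(B)$. The short exact sequence $0\to C(A\cap B)\xrightarrow{f} C(A)\oplus C(B)\xrightarrow{g} C(X)\to 0$ (exact since $X=A\cup B$) shows that $g$ induces a quasi-isomorphism $\operatorname{Cone}(f)\to C_\#(X)$. Hence $H_q(M)\cong H_q(X)$ for every $q$, and in particular for $q=0,\dots,d$.

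Next I will treat $M_\#$ as a based free complex. Its basis is the disjoint union of the simplices of $\bar A$ and $\bar B$ in degree $q$ and of $\overline{A\cap B}$ shifted up by one. On this basis I take the union $\mathcal{W}=\mathcal{W}_{\bar A}\cup\mathcal{W}_{\bar B}\cup\mathcal{W}_{\overline{A\cap B}}$, where pairs in $\overline{A\cap B}$ become pairs between degrees $q$ and $q+1$ after the shift. Each pair $(x,y)$ has $\langle\partial^M y,x\rangle=\pm1$, which is invertible in $\mathbb{Z}$. The critical basis elements in degree $q$ are exactly $D_q(X)$.

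The key point is acyclicity of $\mathcal{W}$ in the sense of algebraic discrete Morse theory (Sköldberg, Kozlov, Jöllenbeck--Welker). There the Morse complex of a based complex with an acyclic matching is homotopy equivalent to the original, which generalizes Theorem~\ref{hom}. A zig-zag path alternates "go down by a boundary entry, go up along a matched pair." Boundary entries of $\partial^M$ either stay inside one block or go from the $\overline{A\cap B}$ block into the $\bar A$ or $\bar B$ block; nothing returns to $\overline{A\cap B}$. Matched pairs stay inside a block. So any closed path lies in a single block, and there it is a closed trajectory of one of the given gradient fields, which is impossible. Thus $\mathcal{W}$ is acyclic and the algebraic Morse theorem gives $H(M)\cong H(\text{Morse complex})$.

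The main remaining step, and the main obstacle, is to check that the Morse differential equals $\partial^{\mathcal D}$, including signs. The Morse differential from $\beta$ to $\alpha$ is a sum over zig-zag paths, each weighted by $\prod(-\langle\partial y_i,x_i\rangle^{-1}\langle \partial y_{i-1},x_i\rangle)$ times the final entry. Paths starting in $\bar A$ or $\bar B$ never leave their block, which gives cases 1 and 2 with the Forman weight, since $\langle\,,\rangle^{-1}=\langle\,,\rangle$ for $\pm1$. Paths starting in $\overline{A\cap B}$ do one of two things. They may stay there, giving case 3 with the extra $-1$ from $-\partial c$. Otherwise they run in $\overline{A\cap B}$ until some $(\tau_p)_{\overline{A\cap B}}$, cross via $\iota$ to $(\tau_p)_{\bar A}$ or $(\tau_p)_{\bar B}$ with coefficient $\mp1$, and then follow a $\mathcal{W}_{\bar A}$ or $\mathcal{W}_{\bar B}$ trajectory in the same dimension $q-1$, going up through $\alpha_i$ and down to $\tau_{i+1}$. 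This is exactly the shape of cases 4 and 5. I will carefully track how the $-1$ factors in the shifted block combine with the crossing sign to reproduce the stated weights. I will also check that the non-pairing conditions match, so that the matched step is not counted as a "down" step. Once this is done, $\partial^{\mathcal D}$ is the Morse differential, so $\partial^{\mathcal D}\circ\partial^{\mathcal D}=0$ and $H^{\mathcal D}_q(X)\cong H_q(M)\cong H_q(X)$.
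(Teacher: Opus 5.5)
Your proposal is correct and takes a genuinely different route from the paper. Your default sign convention already reproduces all five weights, so no adjustment is needed. In the shifted block an up-step has weight $-1/(-\langle\tau,\sigma\rangle)=\langle\tau,\sigma\rangle$, so each down--up pair there gives the usual factor $-\langle\tau_i,\sigma_{i+1}\rangle\langle\tau_{i+1},\sigma_{i+1}\rangle$. The only leftover sign is the $-1$ of the final down-step in case~3, the crossing coefficient $-1$ (from $-\iota_A$) in case~4, and $+1$ (from $+\iota_B$) in case~5.

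The paper stays entirely inside Forman's simplicial theory. It builds an honest simplicial complex $\widetilde{X}=\bar A\cup\mathbb{P}(\overline{A\cap B})\cup\bar B$, in effect a triangulated double mapping cylinder. It proves $H_\#(\widetilde X)\cong H_\#(X)$ using an auxiliary gradient field $\mathcal V$ whose Thom--Smale complex is isomorphic to $C_\#(X)$. It then lifts $\mathcal{W}_{\overline{A\cap B}}$ into the interior of the prism to obtain a gradient field $\mathcal W$ on $\widetilde X$ whose critical cells correspond to $D_\#(X)$. Most of its length goes into the ground-simplex propositions and Lemmas~\ref{bij} and~\ref{weight}. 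These show that the zig-zags a $\mathcal W$-path makes inside each $S_\alpha$ collapse to single steps of an MV trajectory with the right sign.

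Your mapping cone is the algebraic shadow of $\widetilde X$: a generator $c$ in the shifted block plays the role of the prism cell $c\times I$. So those internal zig-zags never arise. Acyclicity follows from the block-triangular shape of $\partial^M$ instead of the paper's projection to ground simplices. The path bijection is the identity, and the sign verification is the short computation above.

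The price is that you rely on algebraic discrete Morse theory (Sk\"oldberg, J\"ollenbeck--Welker, Kozlov) and on the standard quasi-isomorphism $\mathrm{Cone}(f)\to C_\#(X)$, rather than only Forman's Theorem~\ref{hom}. In return you get a far shorter argument that extends immediately to other coefficient rings, other based complexes, or covers with more pieces. The paper's route, by contrast, exhibits $\mathcal D_\#(X)$ literally as the Thom--Smale complex of a gradient field on a simplicial complex.
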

	
		One novel aspect of our theorem is that, unlike the usual Mayer-Vietoris theorem, we do not need to know the individual homology groups $H_{*}(A), H_{*}(B)$ and $H_{*}(A \cap B)$ (which would require a significant number of additional computational steps). We only need the information of the trajectories of the respective gradient vector fields. It is worthy of mention at this point, that in principle, we can always compute the homology of $X$ explicitly using our theorem (\autoref{main}). Moreover, if we choose the subcomplexes $A$ and $B$ wisely, such that each subcomplex $A$, $B$ and $A \cap B$ admits an efficient gradient vector field (i.e., with a lower number of critical simplices), then it eases the computation of the homology groups considerably.\\

	 As an illustrative example, we compute the homology of $S_0 * S_0 * S_0$ using \autoref{main}. \\
	
	 Let $X=S_0 * S_0 * S_0$. So, we can also write $X$ as $\{v_0, v_2\} * \{v_1, v_3\} * \{v_4, v_5\}$. Let $A$ and $B$ be two subcomplexes of $X$, where $A= \{v_0, v_2\} * \{v_1, v_3\} * \{v_5\}$, $B=\{v_0, v_2\} * \{v_1, v_3\} * \{v_4\}$. Thus, $A \cap B= \{v_0, v_2\} * \{v_1, v_3\}$. We relabel the vertices of $A$ as $a_i$, the vertices of $B$ as $b_i$ and those of $A \cap B$ as $c_i$ corresponding to $v_i$ for each $i$. Therefore, $\bar{A}= \{\bar{a}_0, \bar{a}_2\} * \{\bar{a}_1, \bar{a}_3\} * \{\bar{a}_5\}$, $\bar{B}=\{\bar{b}_0, \bar{b}_2\} * \{\bar{b}_1, \bar{b}_3\} * \{\bar{b}_4\}$ and, $\overline{A \cap B} = \{\bar{c}_0, \bar{c}_2\} * \{\bar{c}_1, \bar{c}_3\}$ (as depicted in Figure~\ref{exmp}).\\
	
	\begin{figure}[h]
		\centering
	\begin{tikzpicture}[
		vertex/.style={draw, circle, fill=black, inner sep=0.05pt, minimum size=0.7mm},
		baseline]
		
		\begin{scope}
			\node[vertex,label=below:$v_0$] (v1) at (0,0) {};
			\node[vertex,label=below left:$v_3$] (v2) at (1.8,-1) {};
			\node[vertex,label={[xshift=2pt,yshift=-1pt]above right: $v_1$}] (v3) at (2.8,0.3) {};
	 		\node[vertex,label=below right: $v_2$] (v4) at (4.5,-0.7) {};
	 		\node[vertex,label=above:$v_4$] (v6) at (2.2,2.5) {};
	 		\node[vertex,label=below:$v_5$] (v5) at (2.5,-3.4) {};
	 		\node at (2,-4.5) {$X$};
			
			\draw (v1) -- (v2);
			\draw (v2) -- (v4);
			\draw[dotted] (v3) -- (v4);
			\draw[dotted] (v1) -- (v3);
			\draw (v1) -- (v6);
			\draw (v2) -- (v6);
			\draw[dotted] (v3) -- (v6);
			\draw (v4) -- (v6);
			\draw (v1) -- (v5);
			\draw (v2) -- (v5);
			\draw[dotted] (v3) -- (v5);
			\draw (v4) -- (v5);
			

		\end{scope}

		\begin{scope}[xshift=9cm]
			\node[vertex, label=below:$\bar{b}_0$] (v1) at (0,0) {};
			\node[vertex,label=below left:$\bar{b}_3$] (v2) at (1.8,-1) {};
			\node[vertex,label=below right: $\bar{b}_2$] (v4) at (4.5,-0.7) {};
			\node[vertex,label={[xshift=0.1pt,yshift=-1pt]above right: $\bar{b}_1$}] (v3) at (2.8,0.3) {};
			
			\node[vertex, fill=black!90, minimum size=2mm, label=above:$\bar{b}_4$] (v6) at (2.2,2) {};
			\node at (3.5,2) {$\bar{B}$};
			\node[vertex,label=below:$\bar{c}_0$ ] (v7) at (0,-2) {};
			\node[vertex,label=below:$\bar{c}_3$] (v8) at (1.8,-3) {};
			\node[vertex, fill=black!90, minimum size=2mm, label=below:$\bar{c}_2$] (v9) at (4.5,-2.7) {};
			\node at (6, -2.7) {$\overline{A \cap B}$};
			\node[vertex, label=below:$\bar{c}_1$] (v10) at (2.8,-1.7) {};
			\node[vertex, label=below:$\bar{a}_0$] (v11) at (0,-4) {};
			\node[vertex, label={[xshift=5pt, yshift=-1pt]below left: $\bar{a}_3$}] (v12) at (1.8,-5) {};
			\node[vertex, label=below right:$\bar{a}_2$] (v13) at (4.5,-4.7) {};
			\node at (5, -6) {$\bar{A}$};
			\node[vertex, label={[xshift=0pt, yshift=-2pt]above right:$\bar{a}_1$}] (v14) at (2.8,-3.7) {};
			\node[vertex, fill=black!90, minimum size=2mm, label=below:$\bar{a}_5$ ] (v15) at (2.5,-7) {};

		\draw (v1) -- (v2);
		\draw (v2) -- (v4);
		\draw[dotted] (v3) -- (v4);
		\draw[dotted] (v1) -- (v3);
		\draw (v1) -- (v6);
		\draw (v2) -- (v6);
		\draw[dotted] (v3) -- (v6);
		\draw (v4) -- (v6);
		\draw (v7) -- (v8);
		\draw[ultra thick] (v8) -- (v9);
		\draw (v9) -- (v10);
		\draw (v7) -- (v10);
		\draw (v11) -- (v12);
		\draw (v12) -- (v13);
		\draw (v13) -- (v14);
		\draw[dotted] (v11) -- (v14);
		\draw (v11) -- (v15);
		\draw (v12) -- (v15);
		\draw (v13) -- (v15);
		\draw[dotted] (v14) -- (v15);
		\draw[->,thick] ($(v1)!0.5!(v3)$) -- (1.6,0.6);
		\draw[->,thick] ($(v1)!0.5!(v2)$) -- (1.2, 0);
		\draw[->,thick] ($(v2)!0.5!(v4)$) -- (2.9, -0.3);
		\draw[->,thick] ($(v3)!0.5!(v4)$) -- (3.3,0.3);
		\draw[->,thick] ($(v11)!0.5!(v12)$) -- (1.2,-5);
		\draw[->,thick] ($(v12)!0.5!(v13)$) -- (3,-5.5);
		\draw[->,thick] ($(v13)!0.5!(v14)$) -- (3.4,-4.7);
		\draw[->,thick] ($(v11)!0.5!(v14)$) -- (1.5,-4.3);
		\draw[->,thick] (v1) -- ($(v1)!0.2!(v6)$);
		\draw[->,thick] (v2) -- ($(v2)!0.2!(v6)$);
		\draw[->,thick] (v4) -- ($(v4)!0.2!(v6)$);
		\draw[->,thick] (v8) -- ($(v8)!0.2!(v7)$);
		\draw[->,thick] (v7) -- ($(v7)!0.2!(v10)$);
		\draw[->,thick] (v10) -- ($(v10)!0.3!(v9)$);
		\draw[->,thick] (v11) -- ($(v11)!0.2!(v15)$);
		\draw[->,thick] (v14) -- ($(v14)!0.2!(v15)$);
		\draw[->,thick] (v13) -- ($(v13)!0.25!(v15)$);
		\draw[->,thick] (v12) -- ($(v12)!0.3!(v15)$);
		\draw[->,thick] (v3) -- ($(v3)!0.3!(v6)$);

		\end{scope}
		
		\draw[->, thick] (6,0) -- (7,0);

	\end{tikzpicture}
	\caption{$S_0 * S_0 * S_0$, and the disjoint copies of its subcomplexes $A$, $B$ and $A \cap B$. The arrows represent the respective gradient vector fields while the critical simplices have been marked in bold.} \label{exmp}
	
\end{figure}
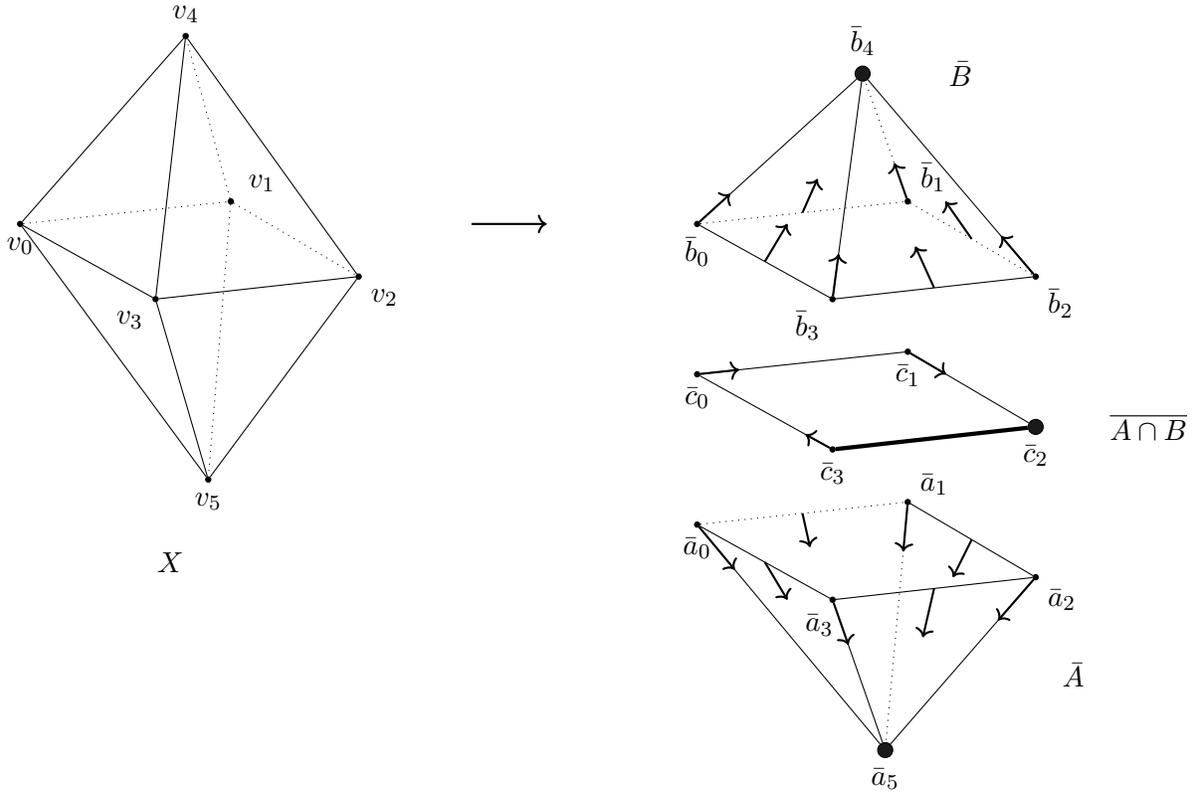
	
	Next, we define gradient vector fields $\mathcal{W}_{\bar{A}}$ on $\bar{A}$, $\mathcal{W}_{\bar{B}}$ on $\bar{B}$, and $\mathcal{W}_{\overline{A \cap B}}$ on $\overline{A \cap B}$. Let,
			$$\mathcal{W}_{\bar{A}} = \{ (\alpha \setminus \{\bar{b}_4\}, \alpha) \mid \alpha \in \bar{A}, \bar{b}_4 \in \alpha\},$$
			

		$$	\mathcal{W}_{\bar{B}} = \{ (\alpha \setminus \{\bar{a}_5\}, \alpha) \mid \alpha \in \bar{B}, \bar{a}_5 \in \alpha\}, $$
			
			
			$$\mathcal{W}_{\overline{A \cap B}} = \{([\bar{c}_3], [\bar{c}_0, \bar{c}_3]),([\bar{c}_0],[\bar{c}_0, \bar{c}_1]), ([\bar{c}_1], [\bar{c}_1, \bar{c}_2]) \}$$.
	
	 Hence, $D_{\#}(X)$ is given by,
	
	$$	D_0(X) =\Crit^{\mathcal{W}_{\bar{A}}}_0(\bar{A}) \cup \Crit^{\mathcal{W}_{\bar{B}}}_0(\bar{B}) = \{[\bar{a}_5], [\bar{b}_4]\},$$
	
	$$ D_1(X)= \Crit^{\mathcal{W}_{\bar{A}}}_1(\bar{A}) \cup \Crit_1^{\mathcal{W}_{\bar{B}}}(\bar{B}) \cup \Crit_0^{\mathcal{W}_{\overline{A \cap B}}}(\overline{A \cap B})=\{[\bar{c}_2]\},$$
	
	$$  D_2(X)= \Crit^{\mathcal{W}_{\bar{A}}}_2(\bar{A}) \cup \Crit^{\mathcal{W}_{\bar{B}}}_2(\bar{B}) \cup \Crit^{\mathcal{W}_{\overline{A \cap B}}}_1(\overline{A \cap B}) = \{[\bar{c}_2, \bar{c}_3]\}. $$
	
	 The chain complex goes as follows.
	
	$$ 0 \rightarrow \mathcal{D}_2(X) \xrightarrow{\partial_2^{\mathcal{D}}} \mathcal{D}_1(X) \xrightarrow{\partial_1^{\mathcal{D}}} \mathcal{D}_0(X) \xrightarrow{\partial_0^{\mathcal{D}}} 0.$$
	
	 The $\MV$ trajectories from $D_1(X)$ to $D_0(X)$ are given by,
	
	$$ P_1: [\bar{c}_2], [\bar{a}_2, \bar{a}_5], [\bar{a}_5].$$
	$$ P_2: [\bar{c}_2], [\bar{b_2}, \bar{b}_4], [\bar{b}_4].$$
	
	 The weights of these $\MV$ trajectories are,
	
	$$w_M(P_1)= - (-\langle [v_2, v_5], [v_2] \rangle \langle [v_2, v_5], [v_5] \rangle) = -1.$$
	$$ w_M(P_2)= (-\langle [v_2, v_4], [v_2] \rangle \langle [v_2, v_4], [v_4] \rangle) = 1.$$
	
	 Thus, 
	\begin{equation*}
		\begin{aligned}
			\partial_1^{\mathcal{D}}([\bar{c}_2])&= w_M(P_1)[\bar{a}_5] + w_M(P_2)[\bar{b}_4]\\
			&=[\bar{b}_4] -[\bar{a}_5].
		\end{aligned}
	\end{equation*}

	  Therefore, $H_0^{\mathcal{D}}(X)= \frac{\Ker(\partial_0^{\mathcal{D}})}{\operatorname{Im}(\partial_1^{\mathcal{D}})} \cong \frac{\langle [\bar{a}_5], [\bar{b}_4] \rangle}{\langle [\bar{b}_4] - [\bar{a}_5]\rangle} \cong \mathbb{Z}$.\\
	 
	  It follows that $\Ker(\partial_1^{\mathcal{D}})= 0$. Thus, $H_1^{\mathcal{D}}(X)= \frac{\Ker(\partial_1^{\mathcal{D}})}{\operatorname{Im}(\partial_2^{\mathcal{D}})} \cong 0.$
	 
	  Next, the $\MV$ trajectories from $D_2$ to $D_1$ are given by,
	 
	 $$ Q_1: [\bar{c}_2, \bar{c}_3], [\bar{c}_2].$$
	 $$ Q_2: [\bar{c}_2, \bar{c}_3], [\bar{c}_3], [\bar{c}_0, \bar{c}_3], [\bar{c}_0], [\bar{c}_0, \bar{c}_1], [\bar{c_1}], [\bar{c}_1, \bar{c}_2], [\bar{c}_2] . $$
	 
	  Therefore, the weights of these $\MV$ trajectories are given by,
	 
	 $$w_M(Q_1)= - \langle [v_2, v_3], [v_2] \rangle = 1. $$
	 \begin{equation*}
	 	\begin{aligned}
	 		 w_M(Q_2)&= - (-\langle [v_2, v_3], [v_3]\rangle \langle [v_0,v_3], [v_3]\rangle)(-\langle [v_0, v_3], [v_0]\rangle \langle [v_0,v_1], [v_0]\rangle)\\ & \hspace{2em}(-\langle [v_0,v_1], [v_1] \rangle \langle [v_1, v_2], [v_1]\rangle) \langle[v_1, v_2], [v_2] \rangle\\
	 		 &=-1.
	 	\end{aligned}
	 \end{equation*} 

	  Thus,

		\begin{equation*}
		\begin{aligned}
			\partial_2^{\mathcal{D}}([\bar{c}_2, \bar{c}_3])&= w_M(Q_1)[\bar{c}_2] + w_M(Q_2)[\bar{c}_2]\\
			&=[\bar{c}_2] -[\bar{c}_2] =0.
		\end{aligned}
	\end{equation*}
		
		 Therefore, $H_2^{\mathcal{D}}(X)= \Ker(\partial_1^{\mathcal{D}}) \cong \langle [\bar{c}_2, \bar{c}_3]\rangle \cong \mathbb{Z}$.
		
	\end{section}

	\begin{section}{Preliminaries}\label{prelim}
			
		A \emph{simplicial complex} $X$ is defined as an ordered pair $(V, \mathcal{F})$, where $V$ is a finite non-empty set and $\mathcal{F}$ denotes a non-empty collection of subsets of $V$, with the property that, if $\sigma \in \mathcal{F}$, then, every subset of $\sigma$ must also be in $\mathcal{F}$. An element of $V$ are said to be a vertex while an element of $\mathcal{F}$ is termed as a simplex.

	  For any simplical complex $X$, the vertex set is denoted by $V(X)$ and the set of simplices is denoted by $\mathcal{F}(X)$. Abusing the notation, we denote the set of simplices as simply $X$ instead of $\mathcal{F}(X)$. The dimension of a simplex $\sigma$ is given by $\dim(\sigma)= |\sigma| - 1$. The dimension of a simplicial complex $X$ is defined as, $\dim(X) = \max\{\dim(\sigma) \mid \sigma \in X\}$. If $\sigma \subseteq \tau$, and $\dim(\sigma)= \dim(\tau)-1$, then $\sigma$ is said to be a \emph{facet} of $\tau$. If $\sigma$ is a simplex, which is not contained in any other simplex, then $\sigma$ is said to be a \emph{maximal} simplex.

	A simplicial complex $A$ is said to be a subcomplex of a simplicial complex $B$ if $A \subseteq B$. The union of two simplicial complexes, $X$ and $Y$, is defined as $X\cup Y$. The intersection of $X$ and $Y$ is defined as $X \cap Y$. It can be verified that both union and intersection of simplicial complexes are themselves simplicial complexes.

			\begin{defin}{(Prism of a Simplicial Complex).}
				Let $X$ be a simplicial complex with $V(X) = \{v_0,v_1, \dots ,v_n\}$. We define \emph{prism of the simplicial complex} $X$, as $\mathbb{P}(X)$, whose  set of vertices is given by, $V(\mathbb{P}(X))=\{a_0, a_1, \dots ,a_n, b_0, b_1, \dots ,b_n\}$, where each $a_i$, $b_i$ corresponds to $v_i$, $i \in \{0, \dots ,n\}$ such that $\{a_0, \dots ,a_n\}$ and $\{b_0, \dots ,b_n\}$ are disjoint. A maximal simplex of $\mathbb{P}(X)$ is of the form, $\{a_{i_0}, \dots ,a_{i_r}, b_{i_r},\dots ,b_{i_q} \}$, $i_0 < i_1, \dots < i_q$, where $\{v_{i_0},\dots ,v_{i_q} \}$ is a maximal simplex of $X$, $0 \leq r \leq q$.  
				\end{defin}

				 Let $\alpha=\{v_{i_0}, \dots ,v_{i_q}\} \in X$. 
				
				Then, we define $A_{\alpha}:= \{ \{a_{i_0}, \dots ,a_{i_r},b_{i_r}, \dots ,b_{i_q}\} \mid 0 \leq r \leq q\}$ and $B_{\alpha} := \{ \{a_{i_0}, \dots , a_{i_{r-1}}, b_{i_r}, \dots \bar{b}_{i_q}\} \mid  1 \leq r \leq q\} \cup \{\{\bar{a}_{i_0}, \dots ,\bar{a}_{i_q}\}\}  \cup \{\{\bar{b}_{i_0}, \dots ,\bar{b}_{i_q}\}\}$. We define, $S_{\alpha}:= A_{\alpha} \cup B_{\alpha}$ and note that $\bigcup_{\alpha \in X}S_{\alpha}= \mathbb{P}(X)$, where $S_{\alpha}$-s are mutually disjoint sets.
				
				 For any simplex $\sigma \in S_{\alpha}$, $\alpha$ is said to be the \emph{ground simplex} of $\sigma$, denoted as $\GS(\sigma)$.
				
			The following diagram illustrates the prism of a two-dimensional simplicial complex $X$, where $X= \{\{v_0\}, \{v_1\}, \{v_2\}, \{v_0, v_1\}, \{v_0, v_2\}, \{v_1, v_2\}, \{v_0, v_1, v_2\}\}$.
				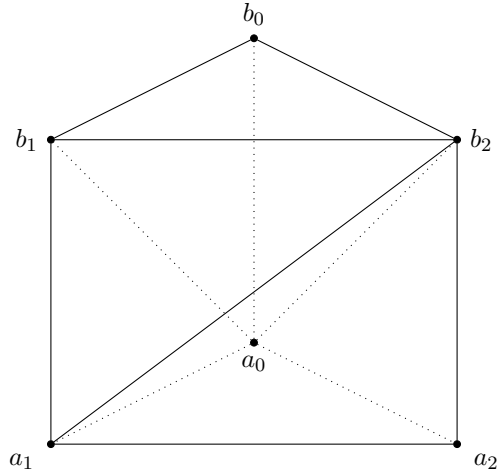
\begin{figure}[h]
					\centering
				
					\begin{tikzpicture}[ vertex/.style={draw, circle, fill=black, inner sep=0.05pt, minimum size=1mm},baseline, scale=0.9, transform shape]
					\node[vertex,label=below: $a_0$] (v1) at (0,0) {};
					\node[vertex, label=above: $b_0$ ] (v2) at (0,4.5) {};
					\node[vertex, label={[xshift=-2pt,yshift=1pt]below left: $a_1$}] (v3) at (-3,-1.5){};
					\node[vertex, label={[xshift=2pt,yshift=1pt]below right: $a_2$}] (v4) at (3,-1.5){};
					\node[vertex, label=left:$b_1$] (v5) at (-3,3){};
					\node[vertex, label=right:$b_2$] (v6) at (3,3){};
					
					\draw (v3) -- (v6);
					\draw[dotted] (v1) -- (v2);
					\draw (v2) -- (v5);
					\draw (v2) -- (v6);
					\draw (v5) -- (v6);
					\draw (v3) -- (v5);
					\draw (v4) -- (v6);
					\draw (v3) -- (v4);
					\draw[dotted] (v1) -- (v3);
					\draw [dotted] (v1) -- (v4);
					\draw[dotted] (v1) -- (v5);
					\draw[dotted] (v1) -- (v6);
					
				\end{tikzpicture}	
				\caption{An illustration of $\mathbb{P}(X)$.}
				\end{figure}
				
	We note from the figure that the maximal simplices of $\mathbb{P}(X)$ are $\{a_0, b_0, b_1, b_2\}, \{a_0, a_1, b_1, b_2\}$ and $\{a_0, a_1, a_2, b_2\}$.
			\begin{defin}
				The \emph{orientation} of a simplex $\{ v_{0},v_{1}, \dots v_{n} \}$ is defined as an ordering of the vertices of the simplex based on the following equivalence relation on all possible orderings of $\{ v_{0},v_{1}, \dots v_{n}\}$.
				$$ \{v_{i_{0}},v_{i_{1}}, \dots v_{i_{n}}\} \sim \{v_{j_{0}},v_{j_{1}}, \dots v_{j_{n}}\} \text{ if } \{i_{0},i_{1}, \dots ,i_{n}\} \text{ and } \{j_{0},j_{1}, \dots ,j_{n}\} \text{ differ by an even permutation}.$$
			\end{defin}
			 Two orientations are said to be same if they belong to the same equivalence class and different otherwise. Thus, for a simplex $\sigma$ such that $\dim(\sigma) \geq 1$, there are precisely two orientations corresponding to the two equivalence classes. A simplex with a given orientation is said to be an \emph{oriented} simplex. An oriented simplex $\{v_0, \dots ,v_n\}$ with the orientation given by the ordering $v_0, \dots ,v_n$ is denoted by $[v_0, \dots ,v_n]$. This represents the orientation of all the simplices in the same equivalence class as $\{ v_{0},v_{1}, \dots v_{n} \}$.\\

			Let $[v_0, \dots ,v_n]$ be an oriented simplex. Then $[v_0, \dots ,\widehat{v_i}, \dots ,v_n]$ is the oriented simplex with the vertex set $\{v_0, \dots ,v_n\} \setminus \{ v_i\}$ and the orientation given by the ordering $v_0, \dots,v_{i-1},v_{i+1}, \dots ,v_n$. 
			
			\begin{defin}{(Incidence Number).} 
				Let $X$ be a simplicial complex and $\sigma, \tau \in X$ such that  $\dim(\tau)=q$, $\dim(\sigma)=q-1$. Let $\tau=[v_0, \dots ,v_q]$. Then the \emph{incidence number} of $\sigma$ with respect to $\tau$ is given by,
				$$ \langle \tau, \sigma \rangle := \begin{cases}
					0 & \text{if } \sigma \not\subset \tau,\\
					(-1)^i & \text{if } \sigma=[v_0,\dots, \widehat{v_i}, \dots ,v_q]. \\
				\end{cases}$$
			\end{defin}
			\vspace{0.15em}
			 \textbf{The Simplicial Chain Complex:} Let $S_{q}(X)$ denote the set of all $q$-dimensional simplices of $X$, and $C_{q}(X)$ be the free abelian group generated by $S_{q}(X)$ over $\mathbb{Z}$. We define the $(q+1)^{th}$ boundary operator, $\partial_{q+1}: C_{q+1}(X) \rightarrow C_{q}(X)$ as,
			
			$$ \partial_{q+1} (\tau) = \sum_{\sigma \in S_{q}(X)} \langle \tau, \sigma \rangle \sigma, \text{    for each    } \text{ } \tau \in S_{q+1}(X),$$
			and extend it linearly to all of $C_{q+1}(X)$.
			Thus, the simplicial chain complex goes as,
			$$ \dots \rightarrow C_{q+1}(X) \xrightarrow{\partial_{q+1}}  C_{q}(X) \xrightarrow{\partial_q} C_{q-1}(X) \xrightarrow{\partial_q} \cdots \rightarrow C_0(X) \rightarrow 0.$$
			
			 The $q^{th}$ homology group is given by $H_{q}(X):= \frac{\Ker(\partial_{q})}{ \operatorname{Im}(\partial_{q+1})}$.\\
			
		 Now we explain some basic notions of homotopy equivalence and isomorphism between chain complexes.
		
		 A \emph{chain map} between two chain complexes $(K_{\#} =\{k_n\}_{n \geq 0}, \partial_{n}^{C})$, and $(L_{\#} =\{L_n\}_{n \geq 0}, \partial_{n}^{L})$ is defined as a sequence $f_{\#}=\{f_{n}\}_{n\geq0}$ of module homomorphisms from $K_{\#}$ to $L_{\#}$ such that,
		  $$ f_{n-1}\circ \partial_{n}=\partial_{n}\circ f_{n}  \text{ for each } i \geq 0.$$
		 
			\[\begin{tikzcd}
				\cdots & { K_{n+1}} && {K_{n}} && {K_{n-1}} & \cdots \\
				\\
				\cdots & {L_{n+1}} && {L_{n}} && {L_{n-1}} & \cdots \\
				\\
				&&&&&&& {}
				\arrow[from=1-1, to=1-2]
				\arrow["{\partial_{n+1}^{K}}", from=1-2, to=1-4]
				\arrow["{f_{n+1}}"', from=1-2, to=3-2]
				\arrow["{\partial_{n}^{K}}", from=1-4, to=1-6]
				\arrow["{\lambda_{n}}"', from=1-4, to=3-2]
				\arrow["{f_{n}}"', from=1-4, to=3-4]
				\arrow[from=1-6, to=1-7]
				\arrow["{\lambda_{n-1}}"', from=1-6, to=3-4]
				\arrow["{f_{n-1}}"', from=1-6, to=3-6]
				\arrow[from=3-1, to=3-2]
				\arrow["{\partial_{n+1}^{L}}"', from=3-2, to=3-4]
				\arrow["{\partial_{n}^{L}}"', from=3-4, to=3-6]
				\arrow[from=3-6, to=3-7]
			\end{tikzcd}\]
				Given two chain maps $f_{\#}$ and $g_{\#}$ between the chain complexes $K_{\#}$ and $L_{\#}$, a \emph{chain homotopy} between $f_{\#}$ and $g_{\#}$ is defined as sequence of module homomorphisms, $\lambda_{n} : K_{n} \longrightarrow L_{n+1}, n \geq 0$ such that, 
			$$ \lambda_{n-1}\circ \partial_{n}^K + \partial_{n+1}^L\circ \lambda_{n} = f_{n} - g_{n}, \text{  for all } n \geq 0.$$ 
				In this case $f_{n}$ and $g_{n}$ are said to be chain homotopic and denoted as $f_{n} \simeq g_{n}$.

			Two chain complexes $K_{\#}$ and $L_{\#}$ are said to be \emph{homotopy equivalent} if for a chain map $f_{\#}:K_{\#} \longrightarrow L_{\#}$, there exists another chain map $g_{\#}: L_{\#} \longrightarrow K_{\#}$ such that, $f_{\#} \circ g_{\#} \simeq id_{L_{\#}}$ and $g_{\#} \circ f_{\#} \simeq id_{K_{\#}}$. The chain map $f_{\#}$ is said to be a \emph{homotopy equivalence}. If two chain complexes are homotopy equivalent, then their respective homology groups are isomorphic. Two chain complexes $K_{\#}$ and $L_{\#}$ are said to be \emph{isomorphic} if, there exists a chain map $f_{\#}:K_{\#} \longrightarrow L_{\#}$ which is an isomorphism. If two chain complexes are isomorphic, then they are homotopy equivalent.\\

		Next, we introduce a few notations related to discrete Morse theory, which we will use in the subsequent sections in our paper.
		
		 Let $X$ be a simplicial complex and $\mathcal{V}$ be a gradient vector field on $X$. Then the simplices which appear in $\mathcal{V}$ are termed as \emph{regular} simplices. We also recall that the simplices in $X$ which do not appear in $\mathcal{V}$ are termed as $\mathcal{V}$-critical. The set of all $\mathcal{V}$-critical simplices in $X$ is denoted as $\Crit^{\mathcal{V}}(X)$. Thus, $$\Crit^{\mathcal{V}}(X) = \bigcup_{q=0}^{\dim(X)} \Crit_q^{\mathcal{V}}(X).$$ 
		where $\Crit_q^{\mathcal{V}}(X)$ represents the set of all $q$-dimensional $\mathcal{V}$-critical simplices in $X$.
		
	 Next, we recall that an extended $\mathcal{V}$-trajectory $P$ is a sequence of simplices of the following form.

	 $$\tau_0^{(q)}, \sigma_1^{(q-1)}, \tau_1^{(q)}, \dots \sigma_k^{(q-1)}, \tau_k^{(q)}, \sigma_{k+1}^{(q-1)},$$
	  where  $(\sigma_{i}^{(q-1)}, \tau_{i}^{(q)}) \in \mathcal{V}$ for all $i \in [k]$,  $ \sigma_{i}^{(q-1)} \subseteq \tau_{i-1}^{(q)}$ and $(\sigma_i^{(q-1)}, \tau_{i-1}^{(q)}) \notin \mathcal{V}$ for all $i\in [k+1]$.
	 
	  We denote the terminal simplex $\sigma_{k+1}$ of $P$ as $t(P)$.

	Now, let $P: \alpha_0, \alpha_1, \dots ,\alpha_k$ be a (Forman) trajectory. Let $\{i_1, \dots ,i_r\} \subseteq \{0, \dots ,k\}$, such that $\{i_1, \dots ,i_r\}= \{i_1, i_1 +1, i_1+2, \dots , i_1 +(r-2), i_1 +(r-1)\}$ and $i_j=i_1 + (j-1)$, for each $j \in [r]$. Then we denote the subsequence $\alpha_{i_1}, \dots, \alpha_{i_r}$, of $P$ as $\alpha_{i_1} P \alpha_{i_r}$. For example, let, $P: \tau_0, \sigma_1, \tau_1, \dots , \sigma_k, \tau_k, \sigma_{k+1}$ be a trajectory. Then we denote a subsequence $\sigma_2, \tau_2, \sigma_3, \tau_3, \sigma_4$  of $P$ as $\sigma_1 P \sigma_4$. Similarly, for an $\MV$-trajectory $P:  \alpha_0, \alpha_1, \dots ,\alpha_k $, we denote the subsequence $\alpha_{i_1}, \dots, \alpha_{i_r}$, of $P$ as $\alpha_{i_1} P \alpha_{i_r}$.

	\end{section}
	
	\begin{section}{Proof of the main theorem}\label{first}

	 Let $X$ be a simplicial complex with subcomplexes $A$ and $B$ such that $A \cup B =X$. In this section, we first construct a new simplicial complex $\widetilde{X}$ from $X$ and show that the simplicial homology of $X$ and $\widetilde{X}$ are isomorphic. Next, we use the gradient vector fields $\mathcal{W}_{\bar{A}}$ on $\bar{A}$, $\mathcal{W}_{\bar{B}}$ on $\bar{B}$ and $\mathcal{W}_{\overline{A \cap B}}$ on $\overline{A \cap B}$ (where $\bar{A}$, $\bar{B}$ and $\overline{A \cap B}$ are disjoint copies of $A$, $B$ and $A \cap B$ respectively) to show that the simplicial homology of $\widetilde{X}$ is isomorphic to the homology group of $\mathcal{D}_q(X)$. This proves our main result.
	
		\subsection{Construction of a new simplicial complex $\widetilde{X}$}
			
		Let $X$ be a simplicial complex with subcomplexes $A$ and $B$ such that $A \cup B=X$. Let $V(A)=\{a_0, \dots ,a_n\}$, $V(B)=\{b_0, \dots ,b_p\}$ and $V(A \cap B)=\{c_0, \dots ,c_m\}$, where, without loss of generality, we assume, $c_i=a_i=b_i$ for each $i \in \{0, \dots ,m\}$. Consider the copies $\bar{A}$ and $\bar{B}$ of $A$ and $B$ respectively as we have defined in Section~\ref{intro}. Let $(A \cap B)_{\bar{A}}$ and $(A \cap B)_{\bar{B}}$ be the copies of $A \cap B$ in $\bar{A}$ and $\bar{ B}$ respectively. Now we construct $\mathbb{P}(\overline{A \cap B})$ with vertex set $V((A \cap B)_{\bar{A}}) \cup V((A \cap B)_{\bar{B}})$ and the set of simplices as defined in Section~\ref{prelim}, i.e., a maximal simplex of $\mathbb{P}(\overline{A \cap B})$ is of the form $[\bar{a}_{i_0}, \dots ,\bar{a}_{i_r}, \bar{b}_{i_r}, \dots ,\bar{b}_{i_q}]$, $r= 0 , \dots , k$, $i_0 < \dots < i_q$, where $[\bar{c}_{i_0}, \dots ,\bar{c}_{i_q}]$ is a maximal simplex of $X$.
	
		 Now we define a new simplicial complex $\widetilde{X}$ as $\widetilde{X}= \bar{A} \cup \mathbb{P}(\overline{A \cap B)} \cup \bar{B}$. We note here that $\widetilde{X}$, being union of simplicial complexes, is a simplicial complex.

		 An important observation here is that, for any simplex $\sigma \in \mathbb{P}(\overline{A \cap B)}$, if $a_i,b_j \in \sigma$, then $i \leq j$.

		Next, we recall from Section~\ref{prelim}, that for each $\alpha=[\bar{c}_{i_0}, \dots ,\bar{c}_{i_q}] \in \overline{A \cap B}$, the simplices of $\mathbb{P}(\overline{A \cap B})$ can be categorized into $A_{\alpha}= \{ [\bar{a}_{i_0}, \dots ,\bar{a}_{i_r},\bar{b}_{i_r}, \dots ,\bar{b}_{i_q}] \mid 0 \leq r \leq q\}$, $B_{\alpha} = \{ [\bar{a}_{i_0}, \dots , \bar{a}_{i_{r-1}}, \bar{b}_{i_r}, \dots \bar{b}_{i_q}] \mid  1 \leq r \leq q\} \cup \{[\bar{a}_{i_0}, \dots ,\bar{a}_{i_q}]\}  \cup \{[\bar{b}_{i_0}, \dots ,\bar{b}_{i_q}]\}$. Further we have defined $S_{\alpha}= A_{\alpha} \cup B_{\alpha}$. 
		
		 Thus, $\bigcup_{\alpha \in \overline{A \cap B}}S_{\alpha}= \mathbb{P}(\overline{A \cap B})$.
		
		We recall that for any simplex $\sigma \in S_{\alpha}$, $\alpha$ is said to be the \emph{ground simplex} of $\sigma$, which we denote as $\GS(\sigma)$.
		
	\subsection{Constructing a gradient vector field on $\widetilde{X}$}
		Here we construct a gradient vector field on $\widetilde{X}$ so that the Thom-Smale chain complex of $\widetilde{X}$ becomes isomorphic to the simplicial chain complex of $X$.\\
		
		  We know that $\mathbb{P}(\overline{A \cap B})=\bigcup_{\alpha \in \overline{A \cap B}}S_{\alpha}$. We define a collection of pairs on $S_{\alpha}$ for each $\alpha \in \overline{A \cap B}$. This gives us a discrete vector field $\mathcal{V}$ on $\widetilde{X}$.
		 
		 The pairing scheme goes as follows.  
		
		 Let $\alpha=[\bar{c}_{i_0}, \dots ,\bar{c}_{i_q}] \in \overline{A \cap B}$.
		\begin{enumerate}[label=(\alph*)]
			\item  Let $\tau=[\bar{a}_{i_0}, \bar{b}_{i_0}, \dots , \bar{b}_{i_q}] \in A_{\alpha}$. Then $([\bar{b}_{i_0}, \dots , \bar{b}_{i_q}], [\bar{a}_{i_0}, \bar{b}_{i_0}, \dots , \bar{b}_{i_q}]) \in \mathcal{V}$.
			
			\item Let $\tau=[\bar{a}_{i_0}, \dots ,\bar{a}_{i_{r-1}}, \bar{a}_{i_r},\bar{b}_{i_r}, \dots ,\bar{b}_{i_q}]  \in A_{\alpha}$, 0 $\leq r \leq q$. \\ Then, $([\bar{a}_{i_0}, \dots ,\bar{a}_{i_{r-1}},\bar{b}_{i_r}, \dots ,\bar{b}_{i_q}], [\bar{a}_{i_0}, \dots ,\bar{a}_{i_{r-1}}, \bar{a}_{i_r},\bar{b}_{i_r}, \dots ,\bar{b}_{i_q}] ) \in \mathcal{V}$.
		\end{enumerate}
		 
		Thus, each $\tau \in A_{\alpha}$ gets paired with a unique simplex from $B_{\alpha}$. We note that $|A_{\alpha}|=q+1$, $|B_{\alpha}|=q+2$. So, the only unpaired simplex left in $S_{\alpha}$ is $[\bar{a}_{i_0}, \dots ,\bar{a}_{i_q}]$.
		
		 Therefore, the only unpaired simplices left in $\mathbb{P}(\overline{A \cap B})$ are, $$\{[\bar{a}_{i_0}, \dots ,\bar{a}_{i_q}] \mid [\bar{c}_{i_0}, \dots ,\bar{c}_{i_q}] \in \overline{A \cap B}\}= (A \cap B)_{\bar{A}},$$
		
		 and hence the unpaired simplices in $\widetilde{X}$ are given by $(\widetilde{X} \setminus \mathbb{P}(\overline{A \cap B})) \cup (A \cap B)_{\bar{A}}$.
	\begin{remark}
		For each $(\sigma^{(q-1)}, \tau^{(q)}) \in \mathcal{V}$, $\sigma \in B_{\alpha}, \tau \in A_{\alpha}$. Conversely, for any $\alpha \in \overline{A \cap B}$, every element of $A_{\alpha}$ is paired with a lower dimensional simplex, while every simplex of $B_{\alpha}$ which has been paired, is paired with a higher dimensional simplex.
	\end{remark}
		
	 Now we need to show that $\mathcal{V}$ is a gradient vector field.
		
		\begin{thm}
			The discrete vector field $\mathcal{V}$ is a gradient vector field.
		\end{thm}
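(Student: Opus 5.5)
The plan is to verify first that $\mathcal{V}$ is a genuine discrete vector field, and then to prove acyclicity by a monotonicity argument inside a single ground-simplex block $S_\alpha$.

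\textbf{Step 1: $\mathcal{V}$ is a discrete vector field.} Fix $\alpha=[\bar{c}_{i_0},\dots,\bar{c}_{i_q}]\in\overline{A\cap B}$ and let $\tau_r:=[\bar{a}_{i_0},\dots,\bar{a}_{i_r},\bar{b}_{i_r},\dots,\bar{b}_{i_q}]\in A_\alpha$ for $0\le r\le q$. Rules (a) and (b) can be read uniformly: $\tau_r$ is paired with its facet $\tau_r\setminus\{\bar{a}_{i_r}\}$, and case (a) is just $r=0$. Each such facet lies in $B_\alpha$, and distinct $r$ give distinct facets. Since the sets $S_\alpha$ are mutually disjoint, no simplex of $\mathbb{P}(\overline{A\cap B})$ occurs in two pairs. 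Every pair is a (facet, coface) pair by construction, so $\mathcal{V}$ is a discrete vector field. By the Remark, the upper element of every pair lies in some $A_\alpha$ and the lower element in some $B_\alpha$.

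\textbf{Step 2: set-up for acyclicity.} Suppose $\tau_0,\sigma_1,\tau_1,\dots,\sigma_k,\tau_k$ is a nontrivial closed $\mathcal{V}$-trajectory. For $i\ge 1$ each $\tau_i$ is the upper element of a pair, and $\tau_0=\tau_k$. Hence every $\tau_i$ is some $\tau_r$ in some $A_\alpha$, and every $\sigma_i$ is the lower element of a pair, so it lies in some $B_{\alpha'}$.

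\textbf{Step 3: facet analysis.} I would show that the only admissible successor of $\tau_r\in A_\alpha$ is $\tau_{r+1}\in A_\alpha$. The facets of $\tau_r$ fall into four types.
\begin{enumerate}
\item Removing $\bar{a}_{i_r}$ gives the partner of $\tau_r$, which is excluded by the definition of a trajectory.
\item Removing $\bar{b}_{i_r}$, for $r<q$, gives $[\bar{a}_{i_0},\dots,\bar{a}_{i_r},\bar{b}_{i_{r+1}},\dots,\bar{b}_{i_q}]\in B_\alpha$. This is paired upward exactly with $\tau_{r+1}$. If $r=q$, removing $\bar{b}_{i_q}$ gives $[\bar{a}_{i_0},\dots,\bar{a}_{i_q}]$, which is critical, so the trajectory cannot continue.
\item Removing $\bar{a}_{i_j}$ with $j<r$ gives a simplex of the form $[\bar{a}\dots\bar{a}_{i_r},\bar{b}_{i_r}\dots]$ with ground simplex $\alpha\setminus\{\bar{c}_{i_j}\}$. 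It therefore lies in $A_{\alpha'}$, so it is paired downward, not upward. Such a simplex cannot serve as $\sigma_{i+1}$, since $\sigma_{i+1}$ must be the lower element of a pair.
\item Removing $\bar{b}_{i_j}$ with $j>r$ likewise gives an element of $A_{\alpha'}$ with $\alpha'=\alpha\setminus\{\bar{c}_{i_j}\}$, again paired downward. This case is excluded for the same reason.
\end{enumerate}

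\textbf{Step 4: conclusion.} Along any trajectory the ground simplex stays fixed and the index $r$ strictly increases. A nontrivial trajectory therefore can never return to $\tau_0$. So there is no nontrivial closed trajectory, and $\mathcal{V}$ is a gradient vector field.

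The main obstacle is the careful bookkeeping in Step 3. One must check that each facet, with its correct ground simplex, is classified correctly as a member of some $A_{\alpha'}$ or $B_{\alpha'}$, using the ordering $i_0<\dots<i_q$ and the convention that $\bar{a}_{i_r}$ and $\bar{b}_{i_r}$ share an index. One must also check the boundary cases $r=0$ and $r=q$.
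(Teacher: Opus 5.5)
Your proof is correct and follows essentially the same route as the paper. In both arguments, from $\tau_r\in A_\alpha$ the only admissible next step is through the facet $\tau_r\setminus\{\bar{b}_{i_r}\}$ to $\tau_{r+1}$, so the ground simplex stays fixed and the index $r$ strictly increases, which rules out a closed trajectory. Your explicit facet classification, showing that the remaining facets lie in some $A_{\alpha'}$ and are already paired downward, together with your check that $\mathcal{V}$ is a well-defined discrete vector field, fills in details the paper's proof leaves implicit (the former reappears later as Observation~1(c)).
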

		
		\begin{proof}
			
			Let, if possible, $\tau_{0}^{(q)}, \sigma_{1}^{(q-1)}, \tau_{1}^{(q)}, \dots ,\sigma_{k}^{(q-1)}, \tau_{k}^{(q)} =\tau_0$ be a closed $\mathcal{V}$-trajectory. 
			
			We note that $\tau_0~(=\tau_k)$, being paired with a lower dimensional simplex must be in $A_{\alpha}$ for some $\alpha \in \overline{A \cap B}$. So, let $\tau_0=[\bar{a}_{i_0}, \dots ,\bar{a}_{i_r}, \bar{b}_{i_r}, \dots ,\bar{b}_{i_q}]$.
			
			Now, $\sigma_1$ being paired with a higher dimensional simplex is in $B_{\alpha}$, for some $\alpha \in \overline{A \cap B}$. So, $\sigma_1$ can be either $[\bar{a}_{i_0}, \dots ,\bar{a}_{i_{r-1}}, \bar{b}_{i_r}, \dots ,\bar{b}_{i_q}]$ or $\sigma_1=[\bar{a}_{i_0}, \dots ,\bar{a}_{i_r}, \bar{b}_{i_{r+1}}, \dots ,\bar{b}_{i_q}]$. We note that the former simplex is paired with $\tau_0$, which is not possible.
			
			Therefore $\sigma_1= [\bar{a}_{i_0}, \dots ,\bar{a}_{i_r}, \bar{b}_{i_{r+1}}, \dots ,\bar{b}_{i_q}]$. According to the construction of $\mathcal{V}$, \\ $\tau_1=  [\bar{a}_{i_0}, \dots ,\bar{a}_{i_r}, \bar{a}_{i_{r+1}}, \bar{b}_{i_{r+1}}, \dots ,\bar{b}_{i_q}]$. Again, $\sigma_2$ being paired with a higher dimensional simplex is in $B_{\alpha}$ for some $\alpha \in \overline{A \cap B}$. Since, $\sigma_2 \ne \sigma_1$, therefore $\sigma_2=  [\bar{a}_{i_0}, \dots ,\bar{a}_{i_{r+1}}, \bar{b}_{i_{r+2}}, \dots ,\bar{b}_{i_q}]$, which makes $\tau_2= [\bar{a}_{i_0}, \dots ,\bar{a}_{i_{r+1}}, \bar{a}_{i_{r+2}}, \bar{b}_{i_{r+2}}, \dots ,\bar{b}_{i_q}] $. Thus, at each lower dimensional simplex $\sigma_j$, we remove a vertex $\bar{b}_i$ from $\tau_{j-1}$ and add a new vertex $\bar{a}_{i+1}$, when we pair it with the next higher dimensional simplex $\tau_j$. We observe that the vertex removed at each lower dimensional simplex does not return in the following simplices in the trajectory. Consequently, a simplex which has occurred once in the trajectory cannot appear again. So, $\tau_k \neq \tau_0$. 
			
			Thus, such a closed $\mathcal{V}$-trajectory is not possible.
			
		\end{proof}
		
		 So, the $\mathcal{V}$-critical simplices are precisely $(\widetilde{X} \setminus \mathbb{P}(\overline{A \cap B})) \cup (A \cap B)_{\bar{A}} = \bar{A} \cup (\bar{B} \setminus (A \cap B)_{\bar{B}})$.
		
		In the next subsection, we prove that the simplicial chain complex of $X$ and the Thom-Smale chain complex of $\widetilde{X}$ with respect to $\mathcal{V}$ are isomorphic.
		
		\begin{remark}
			It is clear that $\widetilde{X}$ and $X$ are homotopy equivalent and hence $H_q(\widetilde{X}) \cong H_q(X)$ for each $q \geq 0$. However, here we choose to provide a combinatorial argument of the same using \autoref{hom}, as a warm-up for the second step of the proof, which is a bit more involved but uses a somewhat similar idea.
		\end{remark}
		
		\subsection{Isomorphism between simplicial chain complex of $X$ and Thom-Smale complex of $\widetilde{X}$ with respect to $\mathcal{V}$}
		
			\begin{thm}\label{iso}
			The following isomorphism holds.
			$$(C_{\#}(X),\partial_{\#}) \cong (C_{\#}^\mathcal{V}(\widetilde{X}, \mathbb{Z}), \partial^\mathcal{V}_{\#}).$$ 
			Hence, $H_{\#}(X) \cong H_{\#}^{\mathcal{V}}(\widetilde{X}, \mathbb{Z})$.
		\end{thm}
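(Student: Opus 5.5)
The plan is to write down an explicit bijection between the simplices of $X$ and the $\mathcal{V}$-critical simplices of $\widetilde{X}$. We then check, generator by generator, that this bijection intertwines $\partial_{\#}$ with $\partial^{\mathcal{V}}_{\#}$ by enumerating all $\mathcal{V}$-trajectories. The homology statement then follows from \autoref{hom} applied to $(\widetilde{X},\mathcal{V})$, together with the fact that isomorphic chain complexes have isomorphic homology.

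\textbf{Step 1: the bijection.} By the preceding theorem, $\Crit^{\mathcal{V}}(\widetilde{X}) = \bar{A} \cup (\bar{B}\setminus (A\cap B)_{\bar{B}})$. Since $X = A \sqcup (B\setminus A)$ as sets of simplices, I define $\phi_q : C_q(X)\to C^{\mathcal{V}}_q(\widetilde{X})$ on generators by $\phi(\sigma)=\sigma_{\bar{A}}$ if $\sigma\in A$, and $\phi(\sigma)=\sigma_{\bar{B}}$ if $\sigma\in B\setminus A$. Orientations are fixed once and for all by the vertex order $\bar a_0<\dots<\bar a_n<\bar b_0<\dots<\bar b_p$ and the induced order on $X$. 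With this choice the prism simplices $[\bar a_{i_0},\dots,\bar a_{i_r},\bar b_{i_r},\dots,\bar b_{i_q}]$ are written in increasing order, and $\langle\tau_{\bar A},\sigma_{\bar A}\rangle=\langle\tau_{\bar B},\sigma_{\bar B}\rangle=\langle \tau,\sigma\rangle$. The map $\phi$ is clearly a bijection on bases, so it remains to show that it is a chain map.

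\textbf{Step 2: trajectories starting in $\bar{A}$.} For $\tau\in A$, every facet of $\tau_{\bar A}$ lies in $\bar A$ and is critical. Indeed, no simplex of $\bar A$ is paired: the only paired simplices of $\mathbb{P}(\overline{A\cap B})$ are those of $A_\alpha$, which contain both $\bar a$- and $\bar b$-vertices, and those of $B_\alpha$ other than $[\bar a_{i_0},\dots,\bar a_{i_q}]$. Hence only trivial trajectories occur, and $\partial^{\mathcal{V}}(\tau_{\bar A})=\sum\langle\tau,\sigma\rangle\sigma_{\bar A}=\phi(\partial\tau)$.

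\textbf{Step 3: trajectories starting in $\bar{B}$.} Let $\tau\in B\setminus A$. A facet $\sigma$ with $\sigma\in B\setminus A$ again gives only the trivial trajectory with weight $\langle\tau,\sigma\rangle$. If instead $\sigma=[c_{i_0},\dots,c_{i_{q-1}}]\in A\cap B$, then $\sigma_{\bar B}$ is paired with $[\bar a_{i_0},\bar b_{i_0},\dots,\bar b_{i_{q-1}}]$, and I will show that the continuation is forced, exactly as in the acyclicity proof. From $\tau_j=[\bar a_{i_0},\dots,\bar a_{i_j},\bar b_{i_j},\dots]$ the facets are of three kinds:
\begin{itemize}
\item dropping $\bar a_{i_j}$ returns to the previous step, which is forbidden;
\item dropping $\bar b_{i_j}$ gives the next $B_\alpha$ simplex, which is paired upward, or at the last step gives the critical simplex $\sigma_{\bar A}$;
\item dropping any other vertex gives a simplex in $A_{\alpha'}$ (same pattern, smaller ground simplex), which is paired downward; the trajectory cannot continue through it and it is not critical, so it contributes nothing.
\end{itemize}
So there is exactly one trajectory $\sigma_{\bar B}\to\cdots\to\sigma_{\bar A}$ from each such facet, prefixed by the step from $\tau_{\bar B}$ to $\sigma_{\bar B}$.

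\textbf{Step 4: the sign computation (main obstacle).} It remains to show that the weight of this forced trajectory equals $\langle\tau,\sigma\rangle$, i.e.\ that the zig-zag through the prism contributes $+1$. Each factor $-\langle\tau_j,\sigma_{j+1}\rangle\langle\tau_{j+1},\sigma_{j+1}\rangle$ involves removing $\bar b_{i_j}$ at position $j+1$ and removing $\bar a_{i_{j+1}}$ at position $j+1$ in increasingly ordered simplices. The two incidence numbers are therefore both $(-1)^{j+1}$, so their product is $+1$ and the factor is $-1$. The very first factor, involving $\langle\tau_0,\sigma_{\bar B}\rangle$ with $\bar a_{i_0}$ removed at position $0$, and the final incidence need separate care. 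I expect the $q$ factors of $-1$ to be cancelled exactly by these boundary signs. If the bookkeeping turns out not to give $+1$, I would adjust $\phi$ by a dimension-dependent sign on $\bar{B}$-simplices, which still yields a chain isomorphism.

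With the sign established, $\partial^{\mathcal{V}}\phi=\phi\partial$ holds on every generator, so $\phi$ is a chain isomorphism. Hence $H_{\#}(X)\cong H^{\mathcal{V}}_{\#}(\widetilde X)$.
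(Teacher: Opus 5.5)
Your proposal follows the paper's proof. Your $\phi$ is the paper's $f_q$ (the paper checks the chain-map identity for the inverse $g_q$, which is equivalent), and your case analysis is the paper's: $\bar A$-generators have only trivial trajectories, and each facet of $\tau_{\bar B}$ in $(A\cap B)_{\bar B}$ starts a unique forced zig-zag through the prism ending at $\sigma_{\bar A}$.

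The one step you left open, Step 4, closes with no sign correction. In your indexing, the first factor is $-\langle\tau_{\bar B},\sigma_{\bar B}\rangle\langle\tau_0,\sigma_{\bar B}\rangle=-\langle\tau,\sigma\rangle$, since $\bar a_{i_0}$ sits in position $0$ of $\tau_0$. The $q-1$ middle factors are each $-1$, as you computed. The final incidence $\langle\tau_{q-1},\sigma_{\bar A}\rangle$ deletes $\bar b_{i_{q-1}}$ from position $q$, giving $(-1)^q$. So the weight is $(-1)^{1+(q-1)+q}\langle\tau,\sigma\rangle=\langle\tau,\sigma\rangle$.

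The paper regroups the same product by prism simplex, pairing each $\tau_j$ with the facet it is entered through and the facet it is exited through. Every such factor $-\bigl(\pm 1\bigr)\bigl(\mp 1\bigr)$ equals $+1$, because the vertices deleted on entry and on exit, $\bar a_{i_j}$ and $\bar b_{i_j}$, occupy adjacent positions. The leading $\langle\tau,\sigma\rangle$ is then all that survives.

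You should carry out this check rather than rely on your fallback. The coefficients between $\bar B$-critical generators must also be preserved, so a diagonal sign change on $\bar B$-simplices can in general only absorb a discrepancy that is uniform across dimensions. It could not absorb a dimension-dependent one.
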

		
		\begin{proof}
			
		First, we need to find a chain map, $g_{\#}:C_{\#}^{\mathcal{V}}(\widetilde{X}, \mathbb{Z}) \rightarrow C_{\#}(X)$, such that $g_{\#}$ is an isomorphism and the following diagram commutes.
				
		\[\begin{tikzcd}
			\cdots & {C_{q+1}(X)} && {C_{q}(X)} & \cdots \\
			\\
			\cdots & {C_{q+1}^\mathcal{V}(\widetilde{X}, \mathbb{Z})} && {C_{q}^\mathcal{V}(\widetilde{X}, \mathbb{Z})} & \cdots
			\arrow[from=1-1, to=1-2]
			\arrow["{\partial_{q+1}}", from=1-2, to=1-4]
			\arrow["{g_{q+1}}"', from=3-2, to=1-2]
			\arrow[from=1-4, to=1-5]
			\arrow["{g_{q}}"', from=3-4, to=1-4]
			\arrow[from=3-1, to=3-2]
			\arrow["{\partial^\mathcal{V}_{{q+1}}}"', from=3-2, to=3-4]
			\arrow[from=3-4, to=3-5]
			\arrow[draw=none, from=1-2, to=3-4, "{\text{\Huge$\circlearrowleft$}}" description]
		\end{tikzcd}\]

		 	 For $0 \leq q \leq \dim(X)$, it suffices to define $g_q$ on the generators. Let $\alpha \in \Crit_q^{\mathcal{V}}(\widetilde{X})$. Then, $\alpha= \sigma_{\bar{A}}$, for some $\alpha \in A$, or $\alpha= \sigma_{\bar{B}}$, for some $\sigma \in B \setminus A$. In both cases we define $ g_q(\alpha)= \sigma$. 
		 	
			
		 For $0 \leq q \leq \dim(X)$, we define $f_q: C_{q}(X) \rightarrow C_q^{\mathcal{V}}(\widetilde{X}, \mathbb{Z})$. It suffices to define $f_q$ on the generators. Let $\sigma \in S_q(X)$. Then we define,
		$$ f_q(\sigma)= \begin{cases}
			\sigma_{\bar{A}} & \text{ if } \sigma \in S_q(X) \cap A, \text{ and}\\
			\sigma_{\bar{B}} & \text{ if } \sigma \in S_q(X) \setminus A.\\
			
		\end{cases}$$

		Now, we show that these maps are isomorphisms. It suffices to show that $f_q \circ g_q= \operatorname{id}$ and $g_q \circ f_q = \operatorname{id}$ on the set of generators, where $\operatorname{id}$ represents the identity map. 
		
		Now, let $\sigma_{\bar{A}} \in \bar{A}$ for some $\sigma \in A$. Then $f_q \circ g_q(\sigma_{\bar{A}})=f_q(\sigma) =\sigma_{\bar{A}}$, since $\sigma \in A$. Let $\sigma_{\bar{B}} \in \bar{B} \setminus (A \cap B)_{\bar{B}}$, for some $\sigma \in B \setminus A$. Then,  $f_q \circ g_q(\sigma_{\bar{B}})=f_q(\sigma)=\sigma_{\bar{B}}$ since, $ \sigma \in B \setminus A$. Now, let $\sigma \in S_q(X) \cap A$. Then $g_q \circ f_q(\sigma)=g_q(\sigma_{\bar{A}})=\sigma$. Let $\sigma \in S_q(X) \setminus A$. Then, $g_q \circ f_q (\sigma)=g_q(\sigma_{\bar{B}})= \sigma$.
			
		 Thus, we have established an isomorphism between $C_{\#}(X)$ and $C_{\#}^{\mathcal{V}}(\widetilde{X}, \mathbb{Z})$. It remains to show that this is indeed a chain map, that is, $\partial_{q+1} \circ g_{q+1} = g_{q} \circ \partial_{q+1}^{\mathcal{V}}$. It suffices to show that this holds on the set of generators.\\
		
		 \textbf{Case I: } Let $\tau_A \in \Crit_{q+1}^{\mathcal{V}}(\widetilde{X})\cap \bar{A}$. As usual, here $\tau_{\bar{A}}$ denotes the copy of $\tau \in A$ in $\bar{A}$.
		
		 Then,  $$\partial_{q+1}(g_{q+1}(\tau_{\bar{A}}))=\partial_{q+1}(\tau)= \sum_{\sigma \in S_{q}(X)} \langle \tau, \sigma\rangle \sigma.$$  \\
		 However,  $$\partial_{q+1}^{\mathcal{V}}(\tau_{\bar{A}})= \sum_{\sigma \in \Crit_{q}^{\mathcal{V}}(\widetilde{X})}\left(\sum_{P \in \Gamma(\tau_{\bar{A}}, \sigma)}w(P)\right)\sigma.$$\\
		
		 Clearly, any facet of $\tau_{\bar{A}}$ is in $\bar{A}$ and hence, critical. So, for every $\sigma \in \Crit_{q-1}^{\mathcal{V}}(\widetilde{X})$, the term $\sum_{P \in \Gamma(\tau_{\bar{A}}, \sigma)}w(P)\sigma$ survives, only when $\sigma$ is a facet of $\tau_{\bar{A}}$. We can write $\sigma$ as $\alpha_{\bar{A}}$ for some $\alpha \in A$. Since every facet of $\tau_{\bar{A}}$ is critical, therefore the only trajectory from $\tau_{\bar{A}}$ to $\alpha_{\bar{A}}$ is the trivial trajectory $P: \tau_{\bar{A}}, \alpha_{\bar{A}}$. Thus, $w(P)= \langle \tau_{\bar{A}}, \alpha_{\bar{A}} \rangle= \langle \tau, \alpha \rangle $. So, \\

				\begin{equation*}
					\begin{aligned}
				\partial_{q+1}^{\mathcal{V}}(\tau_{\bar{A}}) & = \sum_{\alpha_{\bar{A}} \in \Crit_{q}^{\mathcal{V}}(\widetilde{X})}\langle \tau_{\bar{A}}, \alpha_{\bar{A}}\rangle \alpha_{\bar{A}}\\
				\implies g_{q}(\partial_{q+1}^{\mathcal{V}}(\tau_{\bar{A}}))&= \sum_{\alpha_{\bar{A}} \in \Crit_{q}^{\mathcal{V}}(\widetilde{X})}\langle \tau_{\bar{A}}, \alpha_{\bar{A}}\rangle g_{q}(\alpha_{\bar{A}})\\
				&= \sum_{\alpha \in S_{q}(X)}\langle \tau, \alpha \rangle \alpha \text{\hspace{2.5em} (since, $g_q$ is an isomorphism. } )\\
				& = \partial_{q+1}(g_{q+1}(\tau_{\bar{A}})). 
			\end{aligned}
		\end{equation*}
		
		 \textbf{Case II: } Let $\tau_{\bar{B}} \in \Crit_{q+1}^{\mathcal{V}}(\widetilde{X})\cap (\bar{B} \setminus (A \cap B)_{\bar{B}})$. Again, $\tau_{\bar{B}} \in \bar{B}$ denotes the copy of $\tau \in B$ in $\bar{B}$.
		 Now,

		\begin{equation*}
			\begin{aligned}
				\partial_{q+1}^{\mathcal{V}}(\tau_{\bar{B}}) &=\sum_{\sigma \in \Crit_{q}^{\mathcal{V}}(\widetilde{X})}\left(\sum_{P \in \Gamma(\tau_{\bar{B}}, \sigma)}w(P)\right)\sigma.
			\end{aligned}
		\end{equation*}

		Now we show that only the following two types of trajectories are possible from $\tau_{\bar{B}}$ that ends at some $\sigma \in \Crit_{q}^{\mathcal{V}}(\widetilde{X})$, depending on the facets of $\tau_{\bar{B}}$. We note that any facet of $\tau_{\bar{B}}$, must always be in $\bar{B}$.\\

		\emph{Type I:} $P: \tau_{\bar{B}}, \sigma_{\bar{B}}, \dots , t(P)$, where $\sigma_{\bar{B}} \in (A \cap B)_{\bar{B}}$ and the terminal simplex $t(P) \in \Crit_{q}^{\mathcal{V}}(\widetilde{X})$.
		Moreover, we show that, for $\sigma_{\bar{B}} \in (A \cap B)_{\bar{B}}$, any trajectory $P$ of the above form, i.e., $P: \tau_{\bar{B}}, \sigma_{\bar{B}}, \dots , t(P)$, where $t(P) \in \Crit_{q}^{\mathcal{V}}(\widetilde{X})$ is unique, and $t(P)= \sigma_{\bar{A}} \in (A \cap B)_{\bar{A}}$. We further show that the weight of this trajectory $P$ is $w(P)= \langle \tau_{\bar{B}}, \sigma_{\bar{B}}\rangle = \langle \tau, \sigma\rangle$. We begin by trying to find all such trajectories. Let $P: (\tau_{\bar{B}}=)~\tau_0^{(q+1)}, (\sigma_{\bar{B}}=)~\sigma_1^{(q)}, \tau_1^{(q+1)} \dots \sigma_k^{(q)}, \tau_k^{(q+1)}, \sigma_{k+1}^{(q)}~(=t(P))$.
		
 		 Let $\sigma_{\bar{B}}=[\bar{b}_{i_0}, \dots ,\bar{b}_{i_{q}}]$. According to our construction of $\mathcal{V}$, $\tau_1= [\bar{a}_{i_0}, \bar{b}_{i_0}, \dots ,\bar{b}_{i_{q}}]$. We observe that, none of the facets of $\tau_1$ can be critical so $\sigma_2$ must be paired with a higher dimensional simplex for the trajectory to continue. Therefore, $\sigma_2 \in B_{\alpha}$ for some $ \alpha \in \overline{A \cap B}$. Since, $\sigma_1 \ne \sigma_2$, therefore, $\sigma_2=[\bar{a}_{i_0}, \bar{b}_{i_1}, \dots ,\bar{b}_{i_{q}}]$, which makes $\tau_2=[\bar{a}_{i_0}, \bar{a}_{i_1}, \bar{b}_{i_1}, \dots ,\bar{b}_{i_{q}}]$. We follow the same argument to continue the trajectory. Thus, we have $\sigma_j=[\bar{a}_{i_0}, \dots,\bar{a}_{i_{j-2}},\bar{b}_{i_{j-1}}, \dots ,\bar{b}_{i_{q}}], \tau_j=[\bar{a}_{i_0}, \dots, \bar{a}_{i_{j-2}}, \bar{a}_{i_{j-1}},\bar{b}_{i_{j-1}}, \dots ,\bar{b}_{i_{q}}], \sigma_{j+1}=[\bar{a}_{i_0}, \dots ,\bar{a}_{i_{j-1}},\bar{b}_{i_j}, \dots,\bar{b}_{i_{q}}]$. We continue to reach $\tau_{q+1}=[\bar{a}_{i_0}, \dots ,\bar{a}_{i_q}, \bar{b}_{i_{q}}]$. We note here that there is no facet of $\tau_{q+1}$, distinct from $\sigma_{q+1}$ which is paired with a higher dimensional simplex. So the trajectory must end at $\sigma_{q+2}=[\bar{a}_{i_0}, \dots ,\bar{a}_{i_q}]=\sigma_{\bar{A}}$, which is critical. While continuing the trajectory, at every step, we have made sure that there is no other possible option for the next simplex, which makes $P$ unique. Moreover, the weight of $P$ is given by,
		  $$w(P)= \langle \tau_{\bar{B}}, \sigma_{\bar{B}}\rangle \prod_{j=1}^{q}\left(-\langle \tau_j, \sigma_j\rangle \langle \tau_j, \sigma_{j+1} \rangle \right) =\langle \tau_{\bar{B}}, \sigma_{\bar{B}}\rangle \prod_{j=1} ^{q}(-(-1)^{j-1}(-1)^j)=\langle \tau_{\bar{B}}, \sigma_{\bar{B}}\rangle = \langle \tau, \sigma \rangle$$.
		 
		  \emph{Type II:} $P: \tau_{\bar{B}}, \sigma_{\bar{B}}$, where $\sigma_{\bar{B}}$ is a facet of $\tau_{\bar{B}}$ such that $\sigma_{\bar{B}} \in \bar{B} \setminus (A \cap B)_{\bar{B}}$ and is hence critical.
		 
		 We note here that if $\sigma_{\bar{B}}$ is a facet of $\tau_{\bar{B}}$ such that $\sigma_{\bar{B}} \in \bar{B} \setminus (A \cap B)_{\bar{B}}$, then the only possible trajectory from $\tau_{\bar{B}}$ to $\sigma_{\bar{B}}$ is of the above form. This is because if there exists any other such trajectory $P'$, then $P'$ must traverse through a regular facet of $\tau_{\bar{B}}$ (we recall from Section~\ref{prelim} that a simplex which has appeared in $\mathcal{V}$ is termed as regular), which must be in $(A \cap B)_{\bar{B}}$. Then $P'$ must be a trajectory of Type I and hence as we have proved in the case of Type I, the terminal simplex cannot be in $\bar{B}$. Therefore, $w(P)= \langle \tau_{\bar{B}}, \sigma_{\bar{B}}\rangle = \langle \tau, \sigma\rangle$.
		 
		  We can now write,
		 
		  \begin{equation*}
		  	\begin{aligned}
		  	\partial_{q+1}^{\mathcal{V}}(\tau_{\bar{B}})&= \sum _{\sigma_{\bar{A}}^{(q)} \in (A \cap B)_{\bar{A}}}\langle \tau, \sigma \rangle\sigma_{\bar{A}} + \sum_{\sigma_{\bar{B}}^{(q)} \in  (\overline{B} \setminus (A \cap B)_{\bar{B}})}\langle \tau, \sigma\rangle\sigma_{\bar{B}}\\
			 \implies g_{q}(\partial_{q+1}^{\mathcal{V}}(\tau_{\bar{B}}))&= \sum _{\sigma_{\bar{A}}^{(q)} \in (A \cap B)_{\bar{A}}}\langle \tau, \sigma \rangle g_{q}(\sigma_{\bar{A}}) + \sum_{\sigma_{\bar{B}}^{(q)} \in  (\overline{B} \setminus (A \cap B)_{\bar{B}})}\langle \tau, \sigma\rangle g_{q}(\sigma_{\bar{B}}) \\
			&= \sum_{\sigma \in S_{q}(X) \cap (A \cap B)} \langle \tau, \sigma\rangle \sigma + \sum_{\sigma \in S_{q}(X) \setminus A}\langle \tau, \sigma\rangle \sigma, \hspace{0.5em} (\text{ since, $g_q$ is an isomorphism} )\\
		   &= \sum_{\sigma \in S_{q}(B)}\langle \tau, \sigma \rangle \sigma \\ &= \partial_{q+1}(g_{q+1}(\tau_{\bar{B}})).
			\end{aligned}
		\end{equation*}
	
	\noindent This completes the proof of Theorem ~\ref{iso}.
				\end{proof}
				
				\noindent So, we have successfully arrived at the following result.
				
			\begin{thm}\label{homeq}
				The simplicial homology of $X$ is isomorphic to the simplicial homology of $\widetilde{X}$, i.e., $H_{\#}(X) \cong H_{\#}(\widetilde{X}, \mathbb{Z})$.
			\end{thm}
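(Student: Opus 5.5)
This is a direct chaining of two results already in hand, \autoref{iso} and Forman's \autoref{hom}.

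First, \autoref{iso} gives an isomorphism of chain complexes
$$(C_{\#}(X),\partial_{\#}) \cong (C^{\mathcal{V}}_{\#}(\widetilde{X}),\partial^{\mathcal{V}}_{\#}).$$
An isomorphism of chain complexes induces isomorphisms on homology in every degree. Hence $H_q(X)\cong H^{\mathcal{V}}_q(\widetilde{X})$ for each $q\ge 0$.

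Second, we have shown that $\mathcal{V}$ is a gradient vector field on the simplicial complex $\widetilde{X}$. So \autoref{hom} applies to the pair $(\widetilde{X},\mathcal{V})$. It says the Thom-Smale complex $(C^{\mathcal{V}}_{\#}(\widetilde{X}),\partial^{\mathcal{V}}_{\#})$ is chain homotopy equivalent to the simplicial chain complex of $\widetilde{X}$. Therefore $H^{\mathcal{V}}_q(\widetilde{X})\cong H_q(\widetilde{X})$ for every $q$.

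Composing the two isomorphisms gives $H_q(X)\cong H_q(\widetilde{X})$ for all $q$, which is the claim.

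The only point needing care is the hypothesis check for \autoref{hom}. Namely, $\widetilde{X}=\bar{A}\cup\mathbb{P}(\overline{A\cap B})\cup\bar{B}$ must be a genuine simplicial complex, and $\mathcal{V}$ must be an acyclic matching on all of $\widetilde{X}$. The first holds because a union of simplicial complexes is a simplicial complex, as noted in the construction. The second is the content of the preceding theorem. I therefore expect no real obstacle.

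As an independent sanity check, not needed for the proof, one could also verify the statement topologically. $\widetilde{X}$ is obtained by gluing $\bar{A}$ and $\bar{B}$ along a cylinder on $A\cap B$, and collapsing that cylinder onto one end gives a space homotopy equivalent to $X$. The combinatorial route above is the one consistent with the paper's approach.
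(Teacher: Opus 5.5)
Your proposal is correct and matches the paper's proof, which is the one line ``the result follows from Theorem~\ref{iso} and Theorem~\ref{hom}.'' You compose the chain isomorphism $C_{\#}(X)\cong C^{\mathcal{V}}_{\#}(\widetilde{X})$ with Forman's theorem, applied to $\widetilde{X}$ with the acyclic field $\mathcal{V}$ established just before, and your explicit hypothesis check is exactly what that preceding theorem supplies.
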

			
					
				\begin{proof}
					
					The result follows from Theorem~\ref{iso} and Theorem~\ref{hom}.
				\end{proof}

			\noindent In the following subsections, we use the gradient vector fields $\mathcal{W}_{\bar{A}}$, $\mathcal{W}_{\bar{B}}$ and $\mathcal{W}_{\overline{A \cap B}}$ on $\bar{A}$, $\bar{B}$, and $\overline{A \cap B}$ respectively, to construct a new gradient vector field $\mathcal{W}$ on $\widetilde{X}$. We show that the Thom-Smale complex of $\widetilde{X}$ with respect to $\mathcal{W}$ is isomorphic to $\mathcal{D}_{\#}(X)$, which we constructed in Section~\ref{intro}. This result, together with, Theorem ~\ref{homeq}, proves our main theorem.
			

				\subsection{Construction of the gradient vector field $\mathcal{W}$ on $\widetilde{X}$ using $\mathcal{W}_{\bar{A}}$, $\mathcal{W}_{\bar{B}}$ and $\mathcal{W}_{\overline{A \cap B}}$} \label{proof}
				\noindent We now construct a new discrete vector field $\mathcal{W}$ on $\widetilde{X}$ using $\mathcal{W}_{\bar{A}}$, $\mathcal{W}_{\bar{B}}$ and $\mathcal{W}_{\overline{A \cap B}}$. 
				
				Before we move into the construction, let us briefly recall $S_{\alpha}$ which we introduced in Section~\ref{prelim}. For any $\alpha=[\bar{c}_{i_0}, \dots ,\bar{c}_{i_q}] \in \overline{A \cap B}$, we have defined $A_{\alpha}=\{[\bar{a}_{i_0}, \dots , \bar{a}_{i_r}, \bar{b}_{i_r}, \dots ,b_{i_q}] \mid 0 \leq r \leq q\}$, $B_{\alpha}= \{[\bar{a}_{i_0}, \dots , \bar{a}_{i_{r-1}}, \bar{b}_{i_r}, \dots ,b_{i_q}] \mid 1 \leq r \leq q\}$ and $S_{\alpha}=A_{\alpha} \cup B_{\alpha}$. Hence we observe that $\mathbb{P}(\overline{A \cap B})= \bigcup_{\alpha \in \overline{ A \cap B}}S_{\alpha}$, where $S_{\alpha}$-s are mutually disjoint.
				
				Let us denote the collection of the \emph{interior simplices} of $\mathbb{P}(\overline{A \cap B})$, i.e., $\mathbb{P}(\overline{A \cap B}) \setminus ((A \cap B)_{\bar{A}} \cup (A \cap B)_{\bar{B}})$ as $\mathbb{P}_{int}(\overline{A \cap B})$.

			 \begin{obs}
			 	Next, we list some simple observations which follow directly from the construction of $\widetilde{X}$.
				\begin{enumerate}[label=(\alph*)]
					\item Let $\sigma^{(q-1)} \subseteq \tau^{(q)}$ such that $\tau \in \mathbb{P}_{int}(\overline{A \cap B})$, $\sigma \in \bar{A}$ (or $\sigma \in \bar{B}$), then $\sigma \in (A \cap B)_{\bar{A}}$ (or $ \sigma \in (A \cap B)_{\bar{B}}$) and $\GS(\sigma)=\GS(\tau)$. Conversely, if $\sigma^{(q-1)} \subseteq \tau^{(q)}$, such that $\sigma \in (A \cap B)_{\bar{A}}$ (or $\sigma \in (A \cap B)_{\bar{B}}$), $\tau \in \mathbb{P}(\overline{A \cap B})$  and $\GS(\sigma)=\GS(\tau)$, then $\tau \in \mathbb{P}_{int}(\overline{A \cap B})$.\label{obs2a}
					\item Let $\sigma \in (A \cap B)_{\bar{A}}$ ( or $(A \cap B)_{\bar{B}}$) so that $\sigma= \alpha_{\bar{A}}$ ( or $\alpha_{\bar{B}}$) for some $\alpha \in A$ (or $B$). Then $\GS(\sigma)= \alpha_{\overline{A \cap B}}$. \label{obs2b}
				\item	Let $\sigma^{(q-1)} \subseteq \tau^{(q)}$, where $\tau \in A_{\beta}$ for some $\beta \in \overline{A \cap B}$. Then, $\GS(\tau) = \GS(\sigma)$, iff $\sigma \in B_{\beta}$. Also,  $\GS(\sigma)\ne \GS(\tau)$ iff $\sigma \in A_{\alpha}$ for some $\alpha \in \overline{A \cap B}$. 	\label{obsc}
					
				\item  Let $\sigma^{(q-1)} \subseteq\tau^{(q)}$ such that $\GS(\sigma) \neq \GS(\tau)= \beta$ and $\tau \in A_{\beta}$. Then  $(\GS(\sigma))^{(q-2)} \subseteq (\GS(\tau))^{(q-1)}$. \label{obsb}
				\end{enumerate}
			\end{obs}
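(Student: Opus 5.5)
The plan is to prove all four items from a single normal form for simplices of $\mathbb{P}(\overline{A \cap B})$. Every simplex of $\mathbb{P}(\overline{A \cap B})$ is a face of some maximal simplex $[\bar{a}_{i_0}, \dots ,\bar{a}_{i_r}, \bar{b}_{i_r}, \dots ,\bar{b}_{i_q}]$. Hence it has the form $\sigma=\{\bar{a}_i : i \in I\} \cup \{\bar{b}_j : j \in J\}$, where $I \cup J$ indexes a simplex of $\overline{A \cap B}$ and $\max I \leq \min J$ whenever both sets are nonempty. In particular $|I \cap J| \leq 1$.

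I will first check the following reading of the definitions. $\GS(\sigma)=\{\bar{c}_k : k \in I \cup J\}$. The case $\sigma \in A_{\GS(\sigma)}$ is exactly the case $|I\cap J|=1$, i.e. there is a ``shared'' index. The case $\sigma \in B_{\GS(\sigma)}$ is exactly the case $I \cap J=\emptyset$, which includes the pure simplices with $J=\emptyset$ or $I=\emptyset$. Disjointness of the $S_\alpha$ follows because the pair $(I,J)$ is recovered from $\sigma$. Item (b) is then immediate: $\alpha_{\bar{A}}$ has $J=\emptyset$ and $I$ equal to the index set of $\alpha$, so it is the pure $\bar{a}$-simplex in $B_{\alpha_{\overline{A\cap B}}}$; the $\bar{B}$ case is symmetric.

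For item (c), let $\tau \in A_\beta$ with shared index $s$, and let $\sigma$ be a facet obtained by deleting one vertex.
\begin{itemize}
\item If the deleted vertex is $\bar{a}_s$ or $\bar{b}_s$, the index set $I\cup J$ is unchanged but now $I \cap J=\emptyset$. So $\GS(\sigma)=\beta$ and $\sigma \in B_\beta$.
\item If the deleted vertex is $\bar{a}_i$ or $\bar{b}_j$ with index $\neq s$, that index disappears from $\GS$ while $s$ remains shared. So $\GS(\sigma)\neq\beta$ and $\sigma \in A_{\GS(\sigma)}$.
\end{itemize}
These two cases are exhaustive and mutually exclusive, which gives both ``iff'' statements. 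Item (d) follows from the second case: $\GS(\sigma)$ is $\GS(\tau)$ with exactly one index removed, hence a facet. The dimension of $\GS(\tau)$ is $q-1$ because $\tau$ has $q+1$ vertices with exactly one repeated index.

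For item (a), a facet $\sigma$ of an interior $\tau$ that lies in $\bar{A}$ has $J=\emptyset$ and $I \neq \emptyset$. Therefore $\tau$ has exactly one $\bar{b}$-vertex, and $\sigma$ is a pure $\bar{a}$-simplex of $\mathbb{P}(\overline{A \cap B})$, i.e. lies in $(A\cap B)_{\bar{A}}$.

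The main obstacle is the ground-simplex equality in (a). It holds when $\tau \in A_\beta$, since the single $\bar{b}$-vertex then has the shared index and deleting it keeps $I\cup J$. It fails for $\tau \in B_\beta$: for example $\tau=[\bar{a}_0,\bar{b}_1]$ has facet $[\bar{a}_0]$ with a different ground simplex. So I would prove (a) under the hypothesis $\tau \in A_\beta$, and note that this is the only situation in which it is used: a $\mathcal{W}$-trajectory passes from a $q$-simplex down to a facet only from simplices paired downward, which by the Remark lie in some $A_\beta$.

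For the converse in (a), suppose $\sigma$ is pure with $\GS(\sigma)=\GS(\tau)$ and $\tau$ has one more vertex. Then the extra vertex must carry an index already in $I$, hence it creates a shared index. So $\tau$ is mixed, i.e. interior, and in fact $\tau \in A_{\GS(\tau)}$. The $\bar{B}$ versions of all items follow by the $\bar{a}\leftrightarrow\bar{b}$ symmetry of the argument.
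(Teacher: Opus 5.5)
Your proposal is correct. The paper gives no argument for this observation beyond asserting that it follows directly from the construction. Your normal form $\sigma=\{\bar a_i : i\in I\}\cup\{\bar b_j : j\in J\}$ with $\max I\le\min J$ is what actually makes these facts direct, read through the dictionary $\GS(\sigma)\leftrightarrow I\cup J$, $A$-sets $\leftrightarrow$ $|I\cap J|=1$, $B$-sets $\leftrightarrow$ $I\cap J=\emptyset$. It reduces (b), (c), (d) and the converse in (a) to one-line checks.

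Your counterexample is also valid, so part (a) is false as written. If $\{c_0,c_1\}$ is an edge of $A\cap B$, the interior simplex $[\bar a_0,\bar b_1]\in B_{[\bar c_0,\bar c_1]}$ has the facet $[\bar a_0]\in(A\cap B)_{\bar A}$, and symmetrically the facet $[\bar b_1]\in(A\cap B)_{\bar B}$. Each of these has a single vertex as ground simplex. Only the membership $\sigma\in(A\cap B)_{\bar A}$ holds for every interior $\tau$. The equality $\GS(\sigma)=\GS(\tau)$ needs $\tau\in A_\beta$, and under that hypothesis it is a special case of (c), since a pure simplex never lies in an $A$-set.

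You are right that the restricted version is all the paper needs. The converse, used in the proof of Proposition~\ref{prop2}, holds unconditionally. The forward direction is applied only in the proof of Lemma~\ref{bij}, to simplices such as $\tau_{r_1-1}$ and $\tau_p$ of a $\mathcal{W}$-trajectory that starts at a critical simplex.

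Do not justify this by the Remark, though. It concerns $\mathcal{V}$ rather than $\mathcal{W}$, and it says nothing about the initial simplex, which is critical and unpaired. Cite instead Observation 2(a), which places the upper member of every $\mathcal{W}'$-pair in some $A_\beta$. Add the explicit form $[\bar a_{i_0},\bar b_{i_0},\dots,\bar b_{i_{q-1}}]\in A_\gamma$ of the interior critical simplices in $C^{int}_q$.
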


				For each $\alpha \in \overline{A \cap B}$, we denote the collection of interior simplices of $\mathbb{P}(A \cap B)$ in $S_{\alpha}$, i.e., $S_{\alpha} \cap \mathbb{P}_{int}(\overline{A \cap B})$ as $S_{\alpha}'$, and the collection of interior simplices of $\mathbb{P}(A \cap B)$ in $B_{\alpha}$, i.e., $B_{\alpha} \cap \mathbb{P}_{int}(\overline{A \cap B})$ as $B_{\alpha}'$.
				
				The construction of $\mathcal{W}$ goes as follows. The simplices in $\bar{A}$ and $\bar{B}$ are paired using $\mathcal{W}_{\bar{A}}$ and $\mathcal{W}_{\bar{B}}$ respectively. Next, for each pair in $\mathcal{W}_{\overline{A\cap B}}$, we define a collection of pairs on $\mathbb{P}_{int}(\overline{A \cap B})$, which we denote as $\mathcal{W}'$. The discrete vector field $\mathcal{W}$ on $\widetilde{X}$ is given by $\mathcal{W}_{\bar{A}} \cup \mathcal{W}_{\bar{B}} \cup \mathcal{W'}$. 
				
				\noindent Now we define $\mathcal{W'}$.

	 Suppose, $(\alpha^{(q-1)}, \beta^{(q)}) \in \mathcal{W}_{\overline{A \cap B}}$. Let $\beta=[\bar{c}_{i_0}, \dots ,\bar{c}_{i_q}]$ and $\alpha= [\bar{c}_{i_0}, \dots, \widehat{\bar{c}_{i_j}}, \dots \bar{c}_{i_q}], j \in \{0, \dots ,q\}$. \\
	
	\noindent \textbf{Case I:} Let $j=0$.
	\begin{enumerate}
	\item $([\bar{a}_{i_1}, \bar{b}_{i_1}, \dots , \bar{b}_{i_q}], [\bar{a}_{i_0}, \bar{a}_{i_1}, \bar{b}_{i_1}, \dots , \bar{b}_{i_q}]) \in \mathcal{W'}$, where $[\bar{a}_{i_0}, \bar{a}_{i_1}, \bar{b}_{i_1}, \dots , \bar{b}_{i_q}] \in S_{\beta}'$, $[\bar{a}_{i_1}, \bar{b}_{i_1}, \dots , \bar{b}_{i_q}] \in S_{\alpha}'$.
	
	\item  (Matching the simplices in $S_{\beta}'$) 
	\begin{enumerate}
		\item $([\bar{a}_{i_0}, \bar{b}_{i_1}, \dots , \bar{b}_{i_q}], [\bar{a}_{i_0},\bar{b}_{i_0}, \bar{b}_{i_1},\dots , \bar{b}_{i_q}]) \in \mathcal{W}'$.
		\item $([\bar{a}_{i_0}, \dots ,\bar{a}_{i_r}, \bar{b}_{i_{r+1}}, \dots \bar{b}_{i_q}], [\bar{a}_{i_0}, \dots ,\bar{a}_{i_r}, \bar{a}_{i_{r+1}}, \bar{b}_{i_{r+1}}, \dots \bar{b}_{i_q}]) \in \mathcal{W}'$ for $1 \leq r \leq (q-1)$.
	\end{enumerate}
	
	\item (Matching the simplices in $S_{\alpha}'$)  
	 $( [\bar{a}_{i_1}, \dots ,\bar{a}_{i_r}, \bar{b}_{i_{r+1}}, \dots ,\bar{b}_{i_q}],[\bar{a}_{i_1}, \dots ,\bar{a}_{i_r},\bar{a}_{i_{r+1}}, \bar{b}_{i_{r+1}}, \dots ,\bar{b}_{i_q}]) \in \mathcal{W}'$, for $1 \leq r \leq (q-1)$.
	
	\end{enumerate}

\noindent \textbf{Case II:} Let $j >0$.

\begin{enumerate}
	\item $( [\bar{a}_{i_0}, \bar{b}_{i_0}, \dots \widehat{\bar{b}_{i_j}}, \dots \bar{b}_{i_q}], [\bar{a}_{i_0},\bar{b}_{i_0}, \dots , \bar{b}_{i_q}]) \in \mathcal{W'}$, where  $[\bar{a}_{i_0}, \bar{b}_{i_0}, \dots \widehat{\bar{b}_{i_j}}, \dots \bar{b}_{i_q}] \in S_{\alpha}'$, $[\bar{a}_{i_0},\bar{b}_{i_0}, \dots , \bar{b}_{i_q}] \in S_{\beta}'$.
	
	\item  (Matching the simplices in $S_{\beta}'$)
	
	$( [\bar{a}_{i_0}, \dots ,\bar{a}_{i_r}, \bar{b}_{i_{r+1}}, \dots \bar{b}_{i_q}], [\bar{a}_{i_0}, \dots ,\bar{a}_{i_r},\bar{a}_{i_{r+1}}, \bar{b}_{i_{r+1}}, \dots \bar{b}_{i_q}]) \in \mathcal{W'}$, for $0 \leq r \leq (q-1)$.
	
	\item (Matching the simplices in $S_{\alpha}'$)
	
	We relabel $\alpha$ as $[\bar{c}_{l_0}, \bar{c}_{l_1}, \dots ,\bar{c}_{l_{q-1}}]$, where, $\{l_0, \dots ,l_{q-1}\} = \{i_0, \dots ,i_q\} \setminus \{i_j\}$, $l_0 < l_1 \dots < l_{q-1}$.
	
	 $( [\bar{a}_{l_0}, \dots ,\bar{a}_{l_r}, \bar{b}_{l_{r+1}}, \dots \bar{b}_{l_{q-1}}], [\bar{a}_{l_0}, \dots ,\bar{a}_{l_r},\bar{a}_{l_{r+1}}, \bar{b}_{l_{r+1}}, \dots \bar{b}_{l_{q-1}}]) \in \mathcal{W'}$ for $0 \leq r \leq (q-2)$.

\end{enumerate}
	
	So, we see that, if $(\alpha^{(q-1)}, \beta^{(q)}) \in \mathcal{W}_{\overline{A \cap B}}$, then all the simplices in $S_{\alpha}' \cup S_{\beta}'$ are paired.  Next, we pair the simplices of  $S_{\gamma}'$ for each $\gamma \in$ $\Crit^{\mathcal{W}_{\overline{A \cap B}}}(\overline{A \cap B})$.

	 Let $\gamma=[\bar{c}_{i_0}, \bar{c}_{i_1}, \dots ,\bar{c}_{i_q}] \in \Crit_q^{\mathcal{W}_{\overline{A \cap B}}}(\overline{A \cap B})$. Then, $$([\bar{a}_{i_0}, \dots ,\bar{a}_{i_{r-1}}, \bar{b}_{i_r}, \dots \bar{b}_{i_q}], [\bar{a}_{i_0}, \dots ,\bar{a}_{i_{r-1}}, \bar{a}_{i_r}, \bar{b}_{i_r}, \dots \bar{b}_{i_q}]) \in \mathcal{W'} \text{, for } 1 \leq r \leq q.$$
	
	 Therefore, the only simplices that remain unpaired with respect to $\mathcal{W'}$ are $\{\sigma =[\bar{a}_{i_0}, \bar{b}_{i_0}, \dots \bar{b}_{i_q}]\in A_{\gamma} \mid \gamma \in \Crit^{\mathcal{W}_{\overline{A \cap B}}}(\overline{A \cap B}) \}$. This completes the construction of $\mathcal{W}'$. The well-definedness of $\mathcal{W}'$ follows from the fact that $S_{\alpha}'$-s are pairwise disjoint.

	 \begin{obs}
	 	Now we list some important observations which we will use subsequently. These observations follow directly from the construction of $\mathcal{W}$.
	 	
	 	\begin{enumerate}[label=(\alph*)]
	 		\item 	Let $(\sigma^{(q-1)}, \tau^{(q)}) \in \mathcal{W'}$, then, \label{obsa}
	 		\begin{enumerate}[label=(\roman*)]
	 			\item If $\GS(\sigma) = \GS(\tau) = \alpha$, then $\sigma \in B_{\alpha}$, $\tau \in A_{\alpha}$. Conversely, if $\sigma \in B_{\alpha}$, for some $\alpha \in \overline{A \cap B}$, then $\GS(\sigma) = \GS(\tau)$ and $\tau \in A_{\alpha}$. In this case, $(\sigma, \tau)$ is either of the form $([\bar{a}_{i_0}, \bar{b}_{i_1}, \dots ,\bar{b}_{i_q}], [\bar{a}_{i_0}, \bar{b}_{i_0}, \bar{b}_{i_1}, \dots ,\bar{b}_{i_q}])$ or of the form \\  $([\bar{a}_{i_0}, \dots, \bar{a}_{i_r}, \bar{b}_{i_{r+1}}, \dots ,\bar{b}_{i_q}], [\bar{a}_{i_0}, \dots, \bar{a}_{i_r}, \bar{a}_{i_{r+1}}, \bar{b}_{i_{r+1}}, \dots ,\bar{b}_{i_q}])$, $0 \leq r \leq (q-1)$.

	 			\item If $\alpha= \GS(\sigma) \ne \GS(\tau) = \beta$, then $\sigma \in A_{\alpha}$, $\tau \in A_{\beta}$. Moreover, $\sigma$ is of the form $[\bar{a}_{i_0},\bar{b}_{i_0}, \dots ,\bar{b}_{i_{q-2}}]$. Conversely, if $\sigma \in A_{\alpha}$ for some $\alpha \in \overline{A \cap B}$, then $\GS(\sigma) \ne \GS(\tau)$, and $\tau \in A_{\beta}$ where $\beta= \GS(\tau)$. In this case, $\tau$ is either of the form $[\bar{a}_{i_0}, \bar{b}_{i_0}, \dots ,\bar{b}_{i_{q-1}}]$ or of the form $[\bar{a}_{i_0}, \bar{a}_{i_1}, \bar{b}_{i_1}, \dots ,\bar{b}_{i_{q-1}}]$.
	 		\end{enumerate}

	  	\item Let $(\sigma^{(q-1)}, \tau^{(q)}) \in \mathcal{W}'$ such that, $\GS(\sigma) \neq \GS(\tau)$. Then $(\GS(\sigma), \GS(\tau)) \in \mathcal{W}_{\overline{A \cap B}}$. Conversely, if $(\alpha, \beta) \in \mathcal{W}_{\overline{A \cap B}}$, then, there exists a unique pair $ (\sigma^{(q-1)}, \tau^{(q)}) \in \mathcal{W'}$ such that $\GS(\sigma)= \alpha$, $\GS(\tau)=\beta$. \label{obse}

	  \item	For each $\gamma \in \Crit^{\mathcal{W}_{\overline{A \cap B}}}(\overline{A \cap B})$, there exists a unique simplex $\sigma$ in $S'_{\gamma}$, which is unpaired with respect  to $\mathcal{W}$. Furthermore, $\sigma \in A_{\gamma}$. Conversely, if $\sigma \in \mathbb{P}_{int}(\overline{A \cap B})$ and is unpaired with respect to $\mathcal{W}$, then $\GS(\sigma) \in \Crit^{\mathcal{W}_{\overline{A \cap B}}}(\overline{A \cap B})$.\label{obsd}

	 		\end{enumerate}
	 \end{obs}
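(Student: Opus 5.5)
The plan is to verify all three items by a direct inspection of the finite list of pairs that make up $\mathcal{W}'$. These are: Case I items 1--3 for $(\alpha,\beta)\in\mathcal{W}_{\overline{A\cap B}}$ with $j=0$, Case II items 1--3 for $j>0$, and the pairs attached to each critical $\gamma$. For each pair I will record two things: the ground simplices of both entries, using that $S_\alpha$'s are pairwise disjoint so $\GS$ is well defined, and whether each entry lies in $A_{\GS}$ or in $B_{\GS}$, read off from its shape $[\bar a_{i_0},\dots,\bar a_{i_r},\bar b_{i_r},\dots]$ (shared index, so in $A$) versus $[\bar a_{i_0},\dots,\bar a_{i_{r-1}},\bar b_{i_r},\dots]$ (no shared index, so in $B$). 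Since every pair of $\mathcal{W}'$ is a facet pair inside $\mathbb{P}_{int}(\overline{A\cap B})$, Observation (c) and (d) of the previous list can be used freely.

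For \ref{obsa}, I would split the list into the \emph{cross} pairs and the remaining pairs. The cross pairs are Case I item 1 and Case II item 1, whose entries lie in $S'_\alpha$ and $S'_\beta$ with $\alpha\neq\beta$. The remaining pairs are Case I items 2--3, Case II items 2--3 and the critical-$\gamma$ pairs, and each of these lies inside a single $S'_\delta$.

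In every non-cross pair the lower simplex has no repeated index, so it lies in $B_\delta$, and the upper simplex has one, so it lies in $A_\delta$. Its form is one of the two displayed in \ref{obsa}(i). This gives the forward direction of (i).

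In a cross pair both entries visibly have a repeated index, so both lie in an $A$-set. The lower one is $[\bar a_{i_1},\bar b_{i_1},\dots]$ or $[\bar a_{i_0},\bar b_{i_0},\dots,\widehat{\bar b_{i_j}},\dots]$, both of the shape $[\bar a,\bar b,\dots,\bar b]$. The upper one is $[\bar a_{i_0},\bar a_{i_1},\bar b_{i_1},\dots]$ or $[\bar a_{i_0},\bar b_{i_0},\dots]$, as claimed in (ii).

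The converses follow from this exhaustive classification. If $\sigma\in B_\alpha$ is a lower entry, the pair cannot be a cross pair, so it is of type (i). If $\sigma\in A_\alpha$, it can only be a cross pair, by Observation (c) of the earlier list.

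For \ref{obse}, the exhaustive classification shows that $\GS(\sigma)\ne\GS(\tau)$ only for cross pairs. Such pairs were created precisely from a pair $(\alpha,\beta)\in\mathcal{W}_{\overline{A\cap B}}$, with $\GS(\sigma)=\alpha$ and $\GS(\tau)=\beta$. For uniqueness in the converse, each $(\alpha,\beta)$ produces exactly one cross pair. No other $(\alpha',\beta')$ produces a pair with these ground simplices, because $\mathcal{W}_{\overline{A\cap B}}$ is a matching and so determines $\beta$ from $\alpha$.

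For \ref{obsd}, I would count. For a critical $\gamma$ of dimension $q$, $S'_\gamma$ consists of the $q+1$ simplices of $A_\gamma$ together with the $q$ interior members of $B_\gamma$. The listed pairs match all of $B'_\gamma$ with the elements of $A_\gamma$ having $r\ge1$, leaving exactly $[\bar a_{i_0},\bar b_{i_0},\dots,\bar b_{i_q}]\in A_\gamma$. This simplex is not matched by any cross pair: cross pairs only involve $S'_\alpha,S'_\beta$ with $\alpha,\beta$ regular, and $\mathcal{W}_{\bar A}$, $\mathcal{W}_{\bar B}$ do not touch interior simplices. For the converse, if $\GS(\sigma)$ is regular, then the construction matches all of $S'_{\alpha}\cup S'_{\beta}$, as noted right after Case II.

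The main obstacle is the bookkeeping in Case II item 3, where $\alpha$ is relabelled by $l_0<\dots<l_{q-1}$. There I must check that the pairs listed, together with the cross pair, exhaust $S'_\alpha$ without double use. Concretely, the cross pair uses $[\bar a_{l_0},\bar b_{l_0},\dots,\bar b_{l_{q-1}}]$, since $l_0=i_0$ when $j>0$, and this is exactly the $A_\alpha$-element with $r=0$ that item 3 (which starts at the $A$-element with $r=1$) omits. A similar check is needed for Case I, where the cross pair uses $[\bar a_{i_1},\bar b_{i_1},\dots]$.
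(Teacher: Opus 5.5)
Your proposal is correct and follows the paper's approach. The paper offers no argument beyond ``these observations follow directly from the construction of $\mathcal{W}$,'' and your exhaustive inspection of the pairs in $\mathcal{W}'$ is exactly that verification. It sorts the pairs into cross pairs and pairs lying inside a single $S'_\delta$, reads off $A$- versus $B$-membership from whether an index is repeated, and counts to show that $S'_\alpha\cup S'_\beta$ is exhausted while each $S'_\gamma$ leaves only $[\bar a_{i_0},\bar b_{i_0},\dots]$ unpaired.
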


	 \begin{prop}{(Types of $\mathcal{W}$-trajectories)} \label{prop}
	 	Let $P: \tau_0^{(q)}, \sigma_1^{(q-1)}, \tau_1^{(q)}, \dots , \sigma_k^{(q-1)}, \tau_k^{(q)}, \sigma_{k+1}^{(q-1)}$ be a  $\mathcal{W}$-trajectory. Then $P$ must be of either of the following four forms.
	 	
	 	\begin{enumerate}
	 	\item $\tau_0 \in \bar{A}$. In this case, $(\sigma_i, \tau_i) \in \mathcal{W}_{\bar{A}}$ for each $i \in [k]$ and $\sigma_{k+1} \in \bar{A}$.
	 	
	 	\item   $\tau_0 \in \bar{B}$. In this case, $(\sigma_i, \tau_i) \in \mathcal{W}_{\bar{B}}$ for each $i \in [k]$ and $\sigma_{k+1} \in \bar{B}$.
	 	
	 	\item $\tau_0 \in \mathbb{P}_{int}(\overline{A \cap B})$, and $(\sigma_i, \tau_i) \in \mathcal{W}'$ for each $i \in [k]$, $\sigma_{k+1} \in \mathbb{P}_{int}(\overline{A \cap B})$.
	 		
	 	\item $\tau_0 \in \mathbb{P}_{int}(\overline{A \cap B})$ such that, there exists $1 \leq r \leq (k+1)$ for which, $\sigma_i \in \bar{A}$ for all $r \leq i \leq (k+1)$ and $\tau_i \in \mathbb{P}_{int}(\overline{A \cap B})$ for all $0  \leq i \leq r-1$.
	 		
	 	\item $\tau_0 \in \mathbb{P}_{int}(\overline{A \cap B})$ such that, there exists $1 \leq r \leq (k+1)$ for which, $\sigma_i \in \bar{B}$ for all $r \leq i \leq (k+1)$ and $\tau_i \in \mathbb{P}_{int}(\overline{A \cap B})$ for all $0 \leq i \leq r-1$.
	 
	 	\end{enumerate}
	 \end{prop}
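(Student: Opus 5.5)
We will classify a $\mathcal{W}$-trajectory $P: \tau_0, \sigma_1, \tau_1, \dots, \sigma_k, \tau_k, \sigma_{k+1}$ according to which of the three disjoint pieces $\bar{A}$, $\bar{B}$, $\mathbb{P}_{int}(\overline{A \cap B})$ of $\widetilde{X}$ contains $\tau_0$. Everything rests on two local facts.

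\emph{(i) Facets.} A facet of a simplex in $\bar{A}$ lies in $\bar{A}$, and a facet of a simplex in $\bar{B}$ lies in $\bar{B}$. A facet of a simplex in $\mathbb{P}_{int}(\overline{A \cap B})$ lies either in $\mathbb{P}_{int}(\overline{A \cap B})$ or, by Observation~\ref{obs2a}, in $(A \cap B)_{\bar{A}} \cup (A \cap B)_{\bar{B}}$.

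\emph{(ii) Partners.} By construction, $\mathcal{W} = \mathcal{W}_{\bar{A}} \cup \mathcal{W}_{\bar{B}} \cup \mathcal{W}'$, and these three sets pair simplices only within $\bar{A}$, within $\bar{B}$, and within $\mathbb{P}_{int}(\overline{A \cap B})$ respectively. So if $(\sigma,\tau) \in \mathcal{W}$, then $\sigma$ and $\tau$ lie in the same piece, and the pair comes from the vector field of that piece.

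\textbf{Case $\tau_0 \in \bar{A}$.} Induct along $P$. If $\tau_{i-1} \in \bar{A}$, then $\sigma_i \in \bar{A}$ by (i). Since $\sigma_i$ is paired upward with $\tau_i$, fact (ii) gives $(\sigma_i,\tau_i) \in \mathcal{W}_{\bar{A}}$ and $\tau_i \in \bar{A}$. The terminal facet $\sigma_{k+1}$ of $\tau_k \in \bar{A}$ is in $\bar{A}$. This is form 1. The case $\tau_0 \in \bar{B}$ is identical and gives form 2.

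\textbf{Case $\tau_0 \in \mathbb{P}_{int}(\overline{A \cap B})$.} Let $r$ be the least index with $\sigma_r \notin \mathbb{P}_{int}(\overline{A \cap B})$, if one exists. For $i<r$, we have $\sigma_i \in \mathbb{P}_{int}$, so (ii) gives $(\sigma_i,\tau_i) \in \mathcal{W}'$ and $\tau_i \in \mathbb{P}_{int}$. Hence $\tau_0,\dots,\tau_{r-1} \in \mathbb{P}_{int}$.

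If no such $r$ exists, every $\sigma_i$ including $\sigma_{k+1}$ lies in $\mathbb{P}_{int}$. This is form 3.

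Otherwise, (i) puts $\sigma_r$, a facet of $\tau_{r-1} \in \mathbb{P}_{int}$, in $(A \cap B)_{\bar{A}}$ or in $(A \cap B)_{\bar{B}}$. Say it is $(A \cap B)_{\bar{A}} \subseteq \bar{A}$. If $r \le k$, then $(\sigma_r, \tau_r) \in \mathcal{W}$, and (ii) forces this pair into $\mathcal{W}_{\bar{A}}$, so $\tau_r \in \bar{A}$. The argument of the first case then applies to the tail $\tau_r P \sigma_{k+1}$ and keeps it inside $\bar{A}$. So $\sigma_i \in \bar{A}$ for all $r \le i \le k+1$, which is form 4. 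The $\bar{B}$ alternative gives form 5.

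\textbf{Main obstacle.} The only delicate point is fact (ii) at the junction step: showing that a simplex of $(A \cap B)_{\bar{A}}$ can never be the lower element of a $\mathcal{W}'$-pair. Here we will check explicitly that every simplex appearing in $\mathcal{W}'$ belongs to some $S'_\gamma \subseteq \mathbb{P}_{int}(\overline{A \cap B})$, which holds by the definition of $\mathcal{W}'$.

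Once this is settled, the trajectory cannot return from $\bar{A}$ to the interior. Indeed, no simplex of $\mathbb{P}_{int}(\overline{A\cap B})$ is a facet of a simplex of $\bar{A}$, since $\bar{A}$ is closed under taking faces. Combined with the partner property, this confirms the monotone structure of forms 4 and 5.
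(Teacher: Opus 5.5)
Your proof is correct and follows the same route as the paper. You split on which of $\bar{A}$, $\bar{B}$, $\mathbb{P}_{int}(\overline{A \cap B})$ contains $\tau_0$, propagate along the trajectory, and in the interior case take the first index $r$ at which $\sigma_r$ leaves the interior, then rerun the $\bar{A}$ (or $\bar{B}$) argument on the tail. The only difference is that you spell out two facts the paper uses silently: facets of interior simplices lie in the interior or in $(A\cap B)_{\bar{A}} \cup (A\cap B)_{\bar{B}}$, and $\mathcal{W}'$ pairs only interior simplices. You also let $r$ range over $[k+1]$, where the paper's text writes $j_0 \in [k]$.
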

	 
	 \begin{proof}
	 	Let $\tau_0 \in \bar{A}$. In this case, every facet of $\tau_0$ is in $\bar{A}$. Therefore, $(\sigma_1, \tau_1) \in \mathcal{W}_{\bar{A}}$. Following the same argument, it can be observed that $(\sigma_i, \tau_i) \in \mathcal{W}_{\bar{A}}$ for each $i \in [k]$, and $\sigma_{k+1} \in \bar{A}$. A similar argument works if $\tau_0 \in \bar{B}$. 
	 	
	 	Let $\tau_0 \in \mathbb{P}_{int}(\overline{A \cap B})$. If $(\sigma_i, \tau_i) \in \mathcal{W}'$ for each $i \in [k]$, and $\sigma_{k+1} \in \mathbb{P}_{int}(\overline{A \cap B})$, then $P$ is of the third type of trajectory. Otherwise, there exists $j_0 \in [k]$, such that $\sigma_{j_0} \in \bar{A}$ or $\bar{B}$ and $\tau_i \in \mathbb{P}_{int}(\overline{A \cap B})$ for all $i < j_0$. If, $j_0=k+1$, then we are done. If $j_0 < k+1$, and $\sigma_{j_0} \in \bar{A}$, then, following the same argument as for the first type of trajectory, we can observe that $(\sigma_j, \tau_j) \in \mathcal{W}_{\bar{A}}$, for all $j_0 \leq j \leq k $, and $\sigma_{k+1} \in \bar{A}$. Similarly, if $j_0 < k+1$, and $\sigma_{j_0} \in \bar{B}$, then $(\sigma_j, \tau_j) \in \mathcal{W}_{\bar{B}}$, for all $j_0 \leq j \leq k $, and $\sigma_{k+1} \in \bar{B}$. These are precisely the fourth and fifth type of trajectories respectively.
	 \end{proof}
	 
	 Henceforth, we refer to these five types trajectories as trajectories of the first, second, third, fourth and fifth type respectively.
	 
	\begin{prop}
		The discrete vector field $\mathcal{W}$ on $\widetilde{X}$ is acyclic. In other words, $\mathcal{W}$ is a gradient vector field.
	\end{prop}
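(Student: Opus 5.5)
Suppose, for contradiction, that $P: \tau_0^{(q)}, \sigma_1^{(q-1)}, \tau_1^{(q)}, \dots, \sigma_k^{(q-1)}, \tau_k^{(q)}=\tau_0$ is a non-trivial closed $\mathcal{W}$-trajectory. The argument splits according to the classification in Proposition~\ref{prop}.

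\textbf{Trajectories of the first, second and fourth/fifth types.} If $\tau_0 \in \bar{A}$, then $P$ is of the first type, so every pair $(\sigma_i,\tau_i)$ lies in $\mathcal{W}_{\bar{A}}$. Hence $P$ is a non-trivial closed $\mathcal{W}_{\bar{A}}$-trajectory, contradicting the acyclicity of $\mathcal{W}_{\bar{A}}$. The case $\tau_0 \in \bar{B}$ is identical, using $\mathcal{W}_{\bar{B}}$. If $\tau_0 \in \mathbb{P}_{int}(\overline{A \cap B})$ and $P$ were of the fourth or fifth type, then some $\tau_i$ with $i \ge r$ would lie in $\bar{A}$ or $\bar{B}$. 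These simplices are paired only through $\mathcal{W}_{\bar{A}}$ or $\mathcal{W}_{\bar{B}}$, so the trajectory can never return to $\tau_k=\tau_0 \in \mathbb{P}_{int}(\overline{A \cap B})$. I would check this directly: every $\tau_i$ with $i \ge r$ is the $\mathcal{W}_{\bar{A}}$- or $\mathcal{W}_{\bar{B}}$-partner of $\sigma_i \in \bar{A}$ or $\bar{B}$, and is therefore not interior. So $P$ must be of the third type, with every pair in $\mathcal{W}'$.

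\textbf{Third type: projecting to ground simplices.} For a closed trajectory of the third type, I would consider the sequence of ground simplices $\GS(\tau_0), \GS(\sigma_1), \GS(\tau_1), \dots$. Each step $\tau_{i-1} \to \sigma_i \to \tau_i$ does one of two things.
\begin{enumerate}[label=(\roman*)]
\item It stays within a single $S_\beta'$, when $\GS(\sigma_i)=\GS(\tau_{i-1})$. By Observation~(c) and Observation~\ref{obsa}(i), $\sigma_i \in B_\beta$ and $\tau_i \in A_\beta$, so the ground simplex does not change.
\item It leaves $S_\beta'$, when $\GS(\sigma_i) \neq \GS(\tau_{i-1})$. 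By Observation~\ref{obsb}, $\GS(\sigma_i)$ is a facet of $\GS(\tau_{i-1})$, and by Observation~\ref{obse}, $(\GS(\sigma_i),\GS(\tau_i)) \in \mathcal{W}_{\overline{A \cap B}}$. Moreover, $(\GS(\sigma_i),\GS(\tau_{i-1}))$ is not a $\mathcal{W}_{\overline{A \cap B}}$-pair: the unique $\mathcal{W}'$-pair lifting it is $(\sigma_i,\tau_i)$, and $\tau_{i-1}\neq\tau_i$. I would verify this last point carefully from the explicit form of the pairs in Observation~\ref{obsa}(ii).
\end{enumerate}
Consequently, after deleting repetitions, the ground-simplex sequence forms a $\mathcal{W}_{\overline{A \cap B}}$-trajectory in $\overline{A \cap B}$ of dimension $q-1$ and codimension $q-2$. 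It is closed because $\tau_k=\tau_0$.

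\textbf{Concluding, and the main obstacle.} If at least one step of kind (ii) occurs, the projected sequence is a non-trivial closed $\mathcal{W}_{\overline{A \cap B}}$-trajectory, contradicting the acyclicity of $\mathcal{W}_{\overline{A \cap B}}$. If no such step occurs, the whole of $P$ lies in a single $S_\beta'$ and uses only pairs of the form in Observation~\ref{obsa}(i). I would then repeat the monotonicity argument from the proof that $\mathcal{V}$ is a gradient vector field: each step replaces some $\bar{b}_{i_r}$ by $\bar{a}_{i_{r+1}}$, so the number of $\bar{a}$-vertices strictly increases and no simplex can recur. The one exception is the Case I pair $([\bar{a}_{i_0}, \bar{b}_{i_1},\dots],[\bar{a}_{i_0},\bar{b}_{i_0},\bar{b}_{i_1},\dots])$, which adds a $\bar{b}$-vertex. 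I would handle it by checking that it can occur only as the first move, since its $\sigma$ contains $\bar{a}_{i_0}$ and no $\bar{a}_{i_1}$.

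I expect the main obstacle to be the bookkeeping in step (ii). There I must show that a move leaving $S_\beta'$ genuinely corresponds to a valid Forman step in $\overline{A \cap B}$, meaning $\GS(\sigma_i)$ is not paired with $\GS(\tau_{i-1})$. I must also rule out a trajectory that alternates between moves within a single $S'$ and cross moves in a way whose projection degenerates, for example with consecutive $\tau$'s in the same $S_\beta'$ collapsing after projection.
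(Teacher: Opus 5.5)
Your outline is the paper's own proof. You reduce to closed trajectories of the third type, handle a constant ground simplex by the monotonicity argument used for $\mathcal{V}$, and otherwise project the cross moves to a closed $\mathcal{W}_{\overline{A\cap B}}$-trajectory. Your explicit disposal of the first, second, fourth and fifth types is more careful than the paper, which simply asserts that all pairs lie in $\mathcal{W}'$.

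The one step that does not yet go through is the non-pairing claim in (ii), and the reason you give for it is not sufficient. Suppose $(\GS(\sigma_i),\GS(\tau_{i-1}))\in\mathcal{W}_{\overline{A\cap B}}$. Since $\GS(\sigma_i)$ is also matched with $\GS(\tau_i)$, you get $\GS(\tau_{i-1})=\GS(\tau_i)=:\beta$. But ``the unique lift is $(\sigma_i,\tau_i)$ and $\tau_{i-1}\neq\tau_i$'' is not yet a contradiction, because a priori $\tau_{i-1}$ could be a different coface of $\sigma_i$ lying in $S_\beta$.

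The missing fact is that $\sigma_i$ has exactly one coface in $S_\beta$. Since $\tau_{i-1}$ is paired downward in $\mathcal{W}'$, it lies in $A_{\GS(\tau_{i-1})}$. Observation~1(c) then gives $\sigma_i\in A_{\GS(\sigma_i)}$, i.e.\ $\sigma_i=[\bar a_{l_0},\dots,\bar a_{l_r},\bar b_{l_r},\dots,\bar b_{l_p}]$. Let $x$ be the index of the vertex of $\beta$ missing from $\GS(\sigma_i)$. Every simplex of $\mathbb{P}(\overline{A\cap B})$ has all its $\bar a$-indices at most all its $\bar b$-indices, so $\bar a_x$ can be added only when $x<l_r$, and $\bar b_x$ only when $x>l_r$. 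Hence $\tau_{i-1}=\tau_i$, contradicting $(\sigma_i,\tau_{i-1})\notin\mathcal{W}$.

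Concretely, in the cross pairs of Cases I and II of the construction of $\mathcal{W}'$, the lower simplex $[\bar a_{i_1},\bar b_{i_1},\dots,\bar b_{i_q}]$ admits only $\bar a_{i_0}$, and $[\bar a_{i_0},\bar b_{i_0},\dots,\widehat{\bar b_{i_j}},\dots,\bar b_{i_q}]$ admits only $\bar b_{i_j}$. The same membership $\sigma_i\in A_{\GS(\sigma_i)}$, via Observation~2(a)(ii), also gives $\GS(\sigma_i)\neq\GS(\tau_i)$. You need this before invoking Observation~2(b), and your write-up skips it.

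With this local fact, consecutive ground simplices in the projected sequence never coincide, so the degeneration you were worried about cannot occur. A loop with a single cross move is excluded automatically, so the projected closed trajectory is non-trivial.

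The paper closes this step non-locally instead. If the projected step failed, the matching property would force $\GS(\sigma_{j_i})=\GS(\sigma_{j_{i-1}})$ for the previous cross move. Uniqueness of lifts (Observation~2(b)) would then give $\sigma_{j_i}=\sigma_{j_{i-1}}$, contradicting a without-loss-of-generality assumption that the closed trajectory repeats no simplex. Your version, once the unique-coface observation is supplied, needs no minimality assumption. It also treats the wrap-around step through $\tau_k=\tau_0$ exactly like the others, a case the paper's write-up handles only loosely.
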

	
	\begin{proof}
		Let, if possible, $P:\tau_0^{(q)}, \sigma_1^{(q-1)}, \tau_1^{(q)}, \dots, \sigma_{k}^{(q-1)}, \tau_{k}^{(q)}(=\tau_0^{(q)} $) be a closed $\mathcal{W}$-trajectory in $\widetilde{X}$. Without loss of generality, we may assume that $\sigma_i \ne \sigma_j$ for each $i \ne j$, $i, j \in [k]$ and $\tau_i \ne \tau_j$, for each $i \ne j$, $\{i,j\} \ne \{0,k\}$.\\
		
		 It follows from \autoref{prop} that $(\sigma_i, \tau_i) \in \mathcal{W'}$ for all $i \in [k]$. 
		
		 Suppose, the ground simplex remains same for each simplex in $P$, i.e., $\GS(\sigma_i)=\GS(\tau_i)= \beta$ for all $i \in [k]$.
		
		 It follows from Observation 2\ref{obsa} that $\tau_0 \in A_{\beta}$ for some $\beta \in \overline{A \cap B}$. From Observation 1\ref{obsc}, it follows that $\sigma_1 \in B_{\beta}$, i.e., $\sigma_1$ is of the form $[\bar{a}_{i_0}, \dots , \bar{a}_{i_r}, \bar{b}_{i_{r+1}}, \dots ,\bar{b}_{i_{q-1}}]$, $0 \leq r \leq (q-2)$. From Observation 2\ref{obsa}, it follows that if $r > 0$, then $\tau_1=[\bar{a}_{i_0}, \dots , \bar{a}_{i_r}, \bar{a}_{i_{r+1}}, \bar{b}_{i_{r+1}}, \dots ,\bar{b}_{i_{q-1}}]$. If $r=0$, then $\tau_1$ is either $[\bar{a}_{i_0}, \bar{b}_{i_0}, \dots ,\bar{b}_{i_{q-1}}]$ or $[\bar{a}_{i_0}, \bar{a}_{i_1},\bar{b}_{i_1}, \dots ,\bar{b}_{i_{q-1}}]$. In the first case, we use Observation 1\ref{obsc} to deduce that $\sigma_2= [\bar{a}_{i_0}, \dots , \bar{a}_{i_{r+1}}, \bar{b}_{i_{r+2}}, \dots ,\bar{b}_{i_{q-1}}]$. Here, we observe that $(r+1) \geq 1$. So, from Observation 2\ref{obsa}, $\tau_2=  [\bar{a}_{i_0}, \dots , \bar{a}_{i_{r+1}}, \bar{a}_{i_{r+2}}, \bar{b}_{i_{r+2}}, \dots ,\bar{b}_{i_{q-1}}]$. We continue in this way. Thus, at each step, we delete a vertex $\bar{b}_{i_j}$ from a higher dimensional simplex $\tau_i$ and add a new vertex $\bar{a}_{i_{j+1}}$ (which has not appeared before), to the next lower dimensional simplex $\sigma_{i+1}$. Now, for $r=0$, if $\tau_1= [\bar{a}_{i_0}, \bar{a}_{i_1},\bar{b}_{i_1}, \dots ,\bar{b}_{i_{q-1}}]$, then we follow the same argument as in the previous case and show that this is not possible. So, let $\tau_1= [\bar{a}_{i_0}, \bar{b}_{i_0}, \dots ,\bar{b}_{i_{q-1}}]$. Here we observe that there is no facet of $\tau_1$ in $\mathbb{P}_{int}(\overline{A \cap B})$ distinct from $\sigma_1$, and with the same ground simplex as $\tau_1$. So, the trajectory cannot continue. Therefore, it is not possible for any simplex to recur in $P$. So $P$ cannot be closed.
		
		Therefore, there exists $\tau_i$ or $\sigma_i$, whose ground simplex is different from those of its preceding simplices, i.e.,
		$$ \GS(\tau_0) = \GS(\sigma_1) = \dots = \GS(\tau_{i-1})= \GS(\sigma_i) \ne \GS(\tau_i),$$
		\begin{center}
			or
		\end{center}
		$$ \GS(\tau_0) = \GS(\sigma_1) = \dots = \GS(\tau_{i-1})\neq \GS(\sigma_i).$$
		
		In the first case, it follows from Observation 2\ref{obsa} that $\tau_{i_1 -1} \in A_{\alpha}$ for some $\alpha \in \overline{A \cap B}$ and from Observation 1\ref{obsc}, we know that $\sigma_{i_1} \in B_{\alpha}$. We again use Observation 2\ref{obsa}, to deduce that $\GS(\tau_{i_1})= \GS(\sigma_{i_1})$, which is a contradiction. 
		
		In the second case, the ground simplex differs for the first time at a lower dimensional simplex $\sigma_i$. Let us rename it as $\sigma_{j_1}$. Here, the ground simplex remains same till $\tau_{j_1 -1}$. So, from Observation 2\ref{obsa}, $\tau_{j_1-1} \in A_{\alpha}$ for some $\alpha \in \overline{A \cap B}$. Then, we deduce from Observation 1\ref{obsc} that $\sigma_{j_1} \in A_{\beta}$ for some $\beta \in \overline{ A \cap B}$ and from Observation 2\ref{obsa}, it follows that $\GS(\sigma_{j_1}) \ne \GS(\tau_{j_1})$. If the ground simplex changes again in $P$, then we follow the same argument as before, and show that the ground simplex changes next at a lower dimensional simplex,  $\sigma_{j_2}$ and $\GS(\sigma_{j_2}) \ne \GS(\tau_{j_2})$. We continue in this way. Suppose, $j_m$ is the maximum in $[k]$, for which the ground simplex differs at $\tau_{j_m}$, i.e., $\GS(\sigma_{j_m}) \neq \GS(\tau_{j_m}) = \GS(\sigma_{j_m +1}) = \dots = \GS(\tau_k)=\GS(\tau_0)$. Then, corresponding to $P$, we obtain a sequence of simplices in $\overline{A \cap B}$ as follows.
		
		$$ \GS(\tau_0), \GS(\sigma_{j_1}), \GS(\tau_{j_1}), \dots ,\GS(\sigma_{j_m}), \GS(\tau_{j_m})~(=\GS(\tau_0)).$$
		
		We note that $\GS(\tau_0)= \GS(\tau_{j_1-1}) \ne \GS(\sigma_{j_1})$ and for each $1 \leq i \leq (m-1)$, $\GS(\tau_{j_i})= \GS(\tau_{j_{i+1}-1}) \ne \GS(\sigma_{j_{i+1}})$. Therefore, it follows from Observation 1\ref{obsb} that $\GS(\sigma_{j_1})^{(q-2)} \subseteq \GS(\tau_0)^{(q-1)}$ and $\GS(\sigma_{j_{i+1}})^{(q-2)} \\ \subseteq \GS(\tau_{j_i})^{(q-1)}$ for each $1 \leq i \leq (m-1)$. Also from Observation 2\ref{obse} it follows that, $(\GS(\sigma_{j_i}), \GS(\tau_{j_i})) \in \mathcal{W}_{\overline{A \cap B}}$. Now, if $(\GS(\sigma_{j_i}), \GS(\tau_{j_{i-1}})) \in \mathcal{W}_{\overline{A \cap B}}$ for some  $i \in \{ 2, \dots ,(m-1)\}$, then $\GS(\sigma_{j_{i-1}})= \GS(\sigma_{j_i})$. Then we again use Observation 2\ref{obse} to deduce that $\sigma_{j_i}= \sigma_{j_{i-1}}$ which contradicts our assumption that the simplices in $P$, except the initial and terminal simplex, are distinct. Similarly, if $(\GS(\tau_0),\GS(\sigma_{j_1})) \in \mathcal{W}_{\overline{A \cap B}}$, then $\GS(\tau_0)=\GS(\tau_{j_1})$. Again, from Observation 2\ref{obse}, $\tau_0 = \tau_{j_1}$ which,as before, contradicts our assumption. Therefore, $(\GS(\sigma_{j_i}), \GS(\tau_{j_{i-1}})) \notin \mathcal{W}_{\overline{A \cap B}}$ for all $0 \leq i \leq (m-1)$ and $(\GS(\tau_0),\GS(\sigma_{j_1})) \notin \mathcal{W}_{\overline{A \cap B}}$. 
		
		Hence this is a closed $\mathcal{W}_{\overline{A \cap B}}$-trajectory. This is a contradiction, as $\mathcal{W}_{\overline{A \cap B}}$ is a gradient vector field. Therefore, such a closed $\mathcal{W}$-trajectory is not possible.


	\end{proof}
	 		
	Now that $\mathcal{W}$ is a gradient vector field, the critical simplices of $\mathcal{W}$ of dimension $q$ can be listed as follows.
				
		\begin{enumerate}
			\item $\Crit_q^{\mathcal{W}_{\bar{A}}}(\bar{A})$,
			\item  $\Crit_q^{\mathcal{W}_{\bar{B}}}(\bar{B})$,
			\item  $\{ \alpha=[\bar{a}_{i_0}, \bar{b}_{i_0}, \dots ,\bar{b}_{i_{q-1}}] \in A_{ \gamma} \mid \gamma=[\bar{c}_{i_0}, \dots ,\bar{c}_{i_{q-1}}] \in \Crit_{q-1}^{\mathcal{W}_{\overline{A \cap B}}}(\overline{A \cap B})\}$.
		\end{enumerate}
			
			Let us denote these three sets of critical simplices as $C^{\bar{A}}_q$, $C_q^{\bar{B}}$ and $C_q^{int}$ respectively.
	\begin{prop}\label{prop1}
		Let $\tau_1 \in \mathbb{P}_{int}(\overline{A \cap B})$ such that $\tau_1 \in \Crit_q^{\mathcal{W}}(\widetilde{X})$, or  $(\sigma_1^{(q-1)}, \tau_1^{(q)}) \in \mathcal{W}$ such that $\GS(\sigma_1) \neq \GS(\tau_1)$. Let $ \sigma_2 \in \mathbb{P}_{int}(\overline{A \cap B})$ such that $\sigma_2 \ne \sigma_1$ and $\sigma_2 \in \Crit_{q-1}^{\mathcal{W}}(\widetilde{X})$, or $(\sigma_2^{(q-1)}, \tau_2^{(q)}) \in \mathcal{W}$, where $\GS(\sigma_2) \ne \GS(\tau_2)$. Further, suppose that $\GS(\sigma_2)$ is a facet of $\GS(\tau_1)$. Then there exists a unique $\mathcal{W}$-trajectory $P$ from $\tau_1$ to $\sigma_2$ with the property that the ground simplex remains same for each simplex in $P$ except $\sigma_2$. (There may be more than one trajectories from $\tau_1$ to $\sigma_2$ but if there exists a trajectory $P'$ from $\tau_1$ to $\sigma_2$ such that the ground simplex remains same throughout $P'$ except $\sigma_2$, then $P'=P$.)
	\end{prop}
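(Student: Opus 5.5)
Throughout, let $\beta:=\GS(\tau_1)=[\bar{c}_{i_0},\dots,\bar{c}_{i_{q-1}}]$ and $\alpha:=\GS(\sigma_2)$. By hypothesis $\alpha$ is a facet of $\beta$, say $\alpha=[\bar{c}_{i_0},\dots,\widehat{\bar{c}_{i_j}},\dots,\bar{c}_{i_{q-1}}]$.

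\emph{Step 1: pin down both endpoints.} First I will show that both $\tau_1$ and $\sigma_2$ are forced to specific simplices. In both cases of the hypothesis on $\tau_1$ (critical, by Observation 2\ref{obsd}, or paired with $\GS(\sigma_1)\neq\GS(\tau_1)$, by Observation 2\ref{obsa}(ii)), $\tau_1\in A_\beta$. It is either $[\bar{a}_{i_0},\bar{b}_{i_0},\dots,\bar{b}_{i_{q-1}}]$ or, in the case $j=0$ of the pairing, $[\bar{a}_{i_0},\bar{a}_{i_1},\bar{b}_{i_1},\dots,\bar{b}_{i_{q-1}}]$. The same observations give $\sigma_2\in A_\alpha$, of the form $[\bar{a}_{l_0},\bar{b}_{l_0},\dots,\bar{b}_{l_{q-2}}]$ or $[\bar{a}_{l_0},\bar{a}_{l_1},\bar{b}_{l_1},\dots]$, where the $l$'s index $\alpha$. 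I will then check, using the explicit pairing rules of Cases I and II, which of these shapes can actually occur for $\sigma_2$: a simplex of $A_\alpha$ that is a \emph{lower} element of a $\mathcal{W}'$-pair across ground simplices, or that is critical.

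\emph{Step 2: build the trajectory.} I will follow the forced dynamics inside $S_\beta$, exactly as in the proof of Theorem~\ref{iso} (Type I) and in the acyclicity proof. Since all simplices except $\sigma_2$ must have ground simplex $\beta$, every intermediate $\sigma_i$ lies in $B'_\beta$ by Observation 1\ref{obsc}. Each $\sigma_i$ is then paired, by Observation 2\ref{obsa}(i), with the unique $\tau_i\in A_\beta$ obtained by inserting the next $\bar{a}$. The only freedom at each $\tau_i$ is the choice of facet. A facet with the same ground simplex is the unique next element of $B_\beta$, since going back would pair with the previous simplex. A facet with a different ground simplex deletes an $\bar{a}$ or $\bar{b}$ vertex and lands in $A_{\alpha'}$ for some facet $\alpha'$ of $\beta$.

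\emph{Step 3: uniqueness, the main obstacle.} The hard part is showing that exactly one stage $\tau_i$ has $\sigma_2$ as a facet, and that the walk reaches that stage. Concretely, the sequence is
\[
\tau_i=[\bar{a}_{i_0},\dots,\bar{a}_{i_{s}},\bar{b}_{i_{s}},\dots,\bar{b}_{i_{q-1}}],
\]
with $s$ increasing. Deleting the vertex $\bar{c}_{i_j}$ while staying of the form $[\bar{a}\dots\bar{a}_{l_0}\text{ or }\bar{a}_{l_1},\bar{b}\dots]$ pins $s$. I expect the answer to be $s=j$ or $s=j+1$, depending on the two possible shapes of $\sigma_2$ and with $j=0$ treated separately. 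The strategy is to show that the determined shape of $\sigma_2$ from Step 1 matches exactly one $s$, by comparing vertex sets, since $\sigma_2\subset\tau_i$ determines how many $\bar{a}$'s appear. I also have to check that the walk is not forced to terminate earlier. This holds because the critical simplex in $S_\beta$, if any, is the bottom element $[\bar{a}_{i_0},\bar{b}_{i_0},\dots]$, which is not a facet candidate along the ascending chain. Finally, I must verify that the pairs $(\sigma_i,\tau_{i-1})$ are not themselves in $\mathcal{W}$, so that the sequence is a genuine trajectory. This follows since $\sigma_i$ is paired with $\tau_i\neq\tau_{i-1}$.

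Combining Steps 2 and 3, any trajectory with constant ground simplex until $\sigma_2$ must coincide with this chain up to the stage $s$ and then step to $\sigma_2$. This gives both existence and uniqueness.
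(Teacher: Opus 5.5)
Your plan has a genuine gap: Step 2 describes the pairing inside $S'_\beta$ incorrectly, and it fails exactly in the case you isolate in Step 1.

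\textbf{The wrong pairing model.} You assert that each same-ground-simplex facet $\sigma_i\in B'_\beta$ is completed to $\tau_i$ ``by inserting the next $\bar a$'', so that the index $s$ in Step 3 increases. This holds when $\beta=\GS(\tau_1)$ is critical or is the upper element of a Case II pair. It does not hold when $\beta$ is the upper element of a pair $(\alpha',\beta)\in\mathcal{W}_{\overline{A\cap B}}$ in which $\alpha'$ is obtained by deleting the first vertex (Case I, $j=0$). In that case the construction (Case I, item 2(a); also the first form in Observation 2(a)(i)) pairs $[\bar a_{i_0},\bar b_{i_1},\dots,\bar b_{i_{q-1}}]$ with $[\bar a_{i_0},\bar b_{i_0},\bar b_{i_1},\dots,\bar b_{i_{q-1}}]$. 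That is, it inserts $\bar b_{i_0}$, not an $\bar a$. Meanwhile $\tau_1=[\bar a_{i_0},\bar a_{i_1},\bar b_{i_1},\dots,\bar b_{i_{q-1}}]$ is paired across with $\sigma_1=[\bar a_{i_1},\bar b_{i_1},\dots,\bar b_{i_{q-1}}]$.

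\textbf{Why your argument breaks there.} By Observation 2(a)(ii), $\sigma_2$ is always the bottom element of $A_{\GS(\sigma_2)}$. Combined with $\sigma_2\neq\sigma_1$, this forces $\sigma_2=[\bar a_{i_0},\bar b_{i_0},\dots,\widehat{\bar b_{i_m}},\dots,\bar b_{i_{q-1}}]$ with $m>0$. Its only coface in $A_\beta$ is $[\bar a_{i_0},\bar b_{i_0},\dots,\bar b_{i_{q-1}}]$. The required trajectory is therefore
\[
\tau_1,\ [\bar a_{i_0},\bar b_{i_1},\dots,\bar b_{i_{q-1}}],\ [\bar a_{i_0},\bar b_{i_0},\bar b_{i_1},\dots,\bar b_{i_{q-1}}],\ \sigma_2,
\]
along which $s$ drops from $1$ to $0$. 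In your monotone model the walk from $\tau_1$ only climbs and never meets that coface, so Step 3 would conclude that no such trajectory exists.

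Your rule ``going back would pair with the previous simplex'' also does not apply at the initial simplex. $\tau_1$ has two interior facets with ground simplex $\beta$, and both are usable. Uniqueness therefore requires showing that the ascending branch through $[\bar a_{i_0},\bar a_{i_1},\bar b_{i_2},\dots,\bar b_{i_{q-1}}]$ never has $\sigma_2$ as a facet. This is the content of the paper's Case II, and your plan has nothing corresponding to it.

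\textbf{Smaller points.} You never use the hypothesis $\sigma_2\neq\sigma_1$, but it does real work in two places:
\begin{itemize}
\item In the case above it rules out $m=0$; there, no valid trajectory would exist.
\item When $\tau_1=[\bar a_{i_0},\bar b_{i_0},\dots,\bar b_{i_{q-1}}]$ is itself paired across, it guarantees $(\sigma_2,\tau_1)\notin\mathcal{W}$ for the one-step trajectory $\tau_1,\sigma_2$.
\end{itemize}
Your expected stage $s\in\{j,j+1\}$ is also wrong. Since $\sigma_2$ is the bottom element of $A_{\GS(\sigma_2)}$, its unique coface in $A_\beta$ is the stage $s=0$ when $m>0$ and $s=1$ when $m=0$. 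For the same reason there is no need to keep the second candidate shape $[\bar a_{l_0},\bar a_{l_1},\bar b_{l_1},\dots]$ for $\sigma_2$ in Step 1.
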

	
	\begin{proof}
	  Let, if possible, $P: \tau_1^{(q)}, \widetilde{\sigma_1}^{(q-1)}, \widetilde{\tau_1}^{(q)}, \dots , \widetilde{\sigma_k}^{(q-1)}, \widetilde{\tau_k}^{(q)}, \sigma_2^{(q-1)}$ be a $\mathcal{W}$-trajectory from $\tau_1$ to $\sigma_2$ such that $\GS(\tau_1)=\GS( \widetilde{\sigma_1})= \dots =\GS( \widetilde{\tau_k}) \ne \GS(\sigma_2)$. Since $\sigma_2 \in \mathbb{P}_{int}(\overline{A \cap B})$, therefore, from \autoref{prop}, $P$ is a $\mathcal{W}$-trajectory of the third type, i.e., $(\widetilde{\sigma_i}, \widetilde{\tau_i}) \in \mathcal{W}'$, for each $i \in [k]$. We will show that such a trajectory exists and is unique.
	 
	From Observation 2\ref{obsa}, and from the construction of $\mathcal{W}$, it follows that  $\tau_1$ is either of the two forms, $[\bar{a}_{i_0}, \bar{b}_{i_0}, \dots ,\bar{b}_{i_{q-1}}]$, $[\bar{a}_{i_0}, \bar{a}_{i_1},\bar{b}_{i_1}, \dots ,\bar{b}_{i_{q-1}}]$. Let, $\GS(\tau_1)=[\bar{c}_{i_0}, \dots ,\bar{c}_{i_{q-1}}]$. Then $\GS(\sigma_2)$ must be $[\bar{c}_{i_0}, \dots , \widehat{\bar{c}_{i_m}}, \dots ,\bar{c}_{i_{q-1}}]$, $0 \leq m \leq (q-1)$. It again follows from Observation 2\ref{obsa} and from the construction of $\mathcal{W'}$ that, $\sigma_2=[\bar{a}_{i_1},\bar{b}_{i_1}, \dots ,\bar{b}_{i_{q-1}}]$ if $m=0$, or $\sigma_2= [\bar{a}_{i_0}, \bar{b}_{i_0}, \dots \widehat{\bar{b}_{i_m}}, \dots ,\bar{b}_{i_{q-1}}]$, if $m >0$.
	 
	 \textbf{Case I:} Let $\tau_1=[\bar{a}_{i_0}, \bar{b}_{i_0}, \dots ,\bar{b}_{i_{q-1}}]$.
	 
	  \emph{Subcase I:} Let $m=0$. Then $\sigma_2=[\bar{a}_{i_1},\bar{b}_{i_1}, \dots ,\bar{b}_{i_{q-1}}]$. Clearly, $\widetilde{\sigma_1} \ne \sigma_2$. Therefore it follows from Observation 1\ref{obsc} that $\widetilde{\sigma_1}=[\bar{a}_{i_0}, \bar{b}_{i_1}, \dots ,\bar{b}_{i_{q-1}}]$. From the construction of $\mathcal{W}$, either $\tau_1$ is critical or, from Observation 2\ref{obsa}, $\tau_1$ is paired with $[\bar{a}_{i_0}, \bar{b}_{i_0}, \dots , \widehat{b_{i_j}}, \dots , \bar{b}_{i_{q-1}}]$. Therefore, again from the construction of $\mathcal{W}$, $\widetilde{\tau_1}=[\bar{a}_{i_0}, \bar{a}_{i_1}, \bar{b}_{i_1}, \dots ,\bar{b}_{i_{q-1}}]$. Here, if $\widetilde{\sigma_2} \ne \sigma_2$, then, again from Observation 1\ref{obsc}, $\widetilde{\sigma_2}=[\bar{a}_{i_0}, \bar{a}_{i_1}, \bar{b}_{i_2}, \dots ,\bar{b}_{i_{q-1}}]$ and hence, $\tau_2=[\bar{a}_{i_0}, \bar{a}_{i_1}, \bar{a}_{i_2}, \bar{b}_{i_2}, \dots ,\bar{b}_{i_{q-1}}]$. Henceforth, at any $j$-th step, $\widetilde{\tau_j}=[\bar{a}_{i_0}, \dots, \bar{a}_{i_j}, \bar{b}_{i_j}, \dots , \bar{b_{i_{q-1}}}]$ for $j \geq 2$. So, $\sigma_2$ cannot be a facet of any $\widetilde{\tau_j}$ for $j \geq 2$. Therefore, $\widetilde{\sigma_2}=\sigma_2$. Thus, in this case, $\tau_1, \widetilde{\sigma_1}, \widetilde{\tau_1}, \sigma_2$ is the unique trajectory from $\tau_1$ to $\sigma_2$ with the property that the ground simplex remains same for every simplex except $\sigma_2$.
	 
	 \emph{Subcase II:} Let $m > 0$. Then $\sigma_2= [\bar{a}_{i_0}, \bar{b}_{i_0}, \dots \widehat{\bar{b}_{i_m}}, \dots ,\bar{b}_{i_{q-1}}]$. Here, if $\widetilde{\sigma_1} \ne \sigma_2$, then, following the same argument as in the previous subcase, we can show that $\widetilde{\tau_j}=[\bar{a}_{i_0}, \dots, \bar{a}_{i_j}, \bar{b}_{i_j}, \dots , \bar{b_{i_{q-1}}}]$ for any $j \geq 1$. So $\sigma_2$ cannot be a facet of $\widetilde{\tau_j}$ for $j \geq 1$. Hence, $\widetilde{\sigma_1}=\sigma_2$ and $\tau_1, \sigma_2$ is the unique trajectory from $\tau_1$ to $\sigma_2$  with the property that the ground simplex remains same for every simplex except $\sigma_2$.\\
	 
	 \textbf{Case II:} Let $\tau_1=[\bar{a}_{i_0}, \bar{a}_{i_1},\bar{b}_{i_1}, \dots ,\bar{b}_{i_{q-1}}]$.
	 
	In this case, we deduce from Observation 2\ref{obsa} that $\tau_1$ must be paired with $\sigma_1=[\bar{a}_{i_1}, \bar{b}_{i_1}, \dots , \bar{b}_{i_{q-1}}]$. But $\sigma_1 \ne \sigma_2$. Therefore $m > 0$ and hence, $\sigma_2= [\bar{a}_{i_0}, \bar{b}_{i_0}, \dots \widehat{\bar{b}_{i_m}}, \dots ,\bar{b}_{i_{q-1}}]$.
	 
	 Here, we observe that $\widetilde{\sigma_1} \ne \sigma_2$. Then, from Observation 1\ref{obsc}, we know that $\widetilde{\sigma_1}=[\bar{a}_{i_0},\bar{b}_{i_1}, \dots ,\bar{b}_{i_{q-1}}]$, or $\widetilde{\sigma_1}= [\bar{a}_{i_0}, \bar{a}_{i_1},\bar{b}_{i_2}, \dots ,\bar{b}_{i_{q-1}}]$. If  $\widetilde{\sigma_1}= [\bar{a}_{i_0}, \bar{a}_{i_1},\bar{b}_{i_2}, \dots ,\bar{b}_{i_{q-1}}]$, then we argue as in the previous cases, and show that $\widetilde{\tau_j}= [\bar{a}_{i_0}, \dots, \bar{a}_{i_{j+1}}, \bar{b}_{i_{j+1}}, \dots ,\bar{b}_{i_{q-1}}]$ for $j \geq 1$. So, $\sigma_2$ cannot be the facet of any $\widetilde{\tau_j}$ for $j \geq 1$. Thus, $\widetilde{\sigma_1}= [\bar{a}_{i_0},\bar{b}_{i_1}, \dots ,\bar{b}_{i_{q-1}}]$ and hence $\widetilde{\tau_1}= [\bar{a}_{i_0}, \bar{b}_{i_0},\bar{b}_{i_1}, \dots ,\bar{b}_{i_{q-1}}]$ (follows from the construction of $\mathcal{W}$). If $\widetilde{\sigma_2} \ne \sigma_2$, then $\GS(\widetilde{\tau_1})=\GS(\widetilde{\sigma_2})$, which is not possible as there is no facet of $\widetilde{\tau_1}$ in $\mathbb{P}_{int}(\overline{A \cap B})$, excepting $\widetilde{\sigma_1}$, with the same ground simplex as $\widetilde{\tau_1}$. Hence $\widetilde{\sigma_2}=\sigma_2$. Thus, $\tau_1, \widetilde{\sigma_1}, \widetilde{\tau_1}, \sigma_2$ is the unique trajectory from $\tau_1$ to $\sigma_2$  with the property that the ground simplex remains same for every simplex except $\sigma_2$.
	\end{proof}
	
	\begin{prop}\label{prop2}
		Let $\tau_1 \in \mathbb{P}_{int}(\overline{A \cap B})$ such that $\tau_1\in \Crit_q^{\mathcal{W}}(\widetilde{X})$, or  $(\sigma_1^{(q-1)}, \tau_1^{(q)}) \in \mathcal{W}$ such that $\GS(\sigma_1) \neq \GS(\tau_1)$. Let $\sigma_2^{(q-1)} \in (A \cap B)_{\bar{A}} \cup (A \cap B)_{\bar{B}}$, such that $\GS(\tau_1)=\GS(\sigma_2)$. Then there exists a unique $\mathcal{W}$-trajectory $P$ from $\tau_1$ to $\sigma_2$ with the property that the ground simplex remains same for each simplex in $P$.(There may be more than one trajectories from $\tau_1$ to $\sigma_2$ but if there exists a trajectory $P'$ from $\tau_1$ to $\sigma_2$ with the property that the ground simplex remains unchanged throughout $P'$, then $P'=P$.)
	\end{prop}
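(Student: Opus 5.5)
I will follow the template of the proof of \autoref{prop1}: start from the two possible shapes of $\tau_1$ and follow the forced steps of the trajectory inside $S'_{\beta}$, where $\beta=\GS(\tau_1)=[\bar{c}_{i_0},\dots,\bar{c}_{i_{q-1}}]$.

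By Observation 2\ref{obsa} and the construction of $\mathcal{W}'$, the hypothesis on $\tau_1$ (critical, or paired with a simplex of different ground simplex) forces $\tau_1\in A_{\beta}$ of one of two forms:
\[
[\bar{a}_{i_0},\bar{b}_{i_0},\dots,\bar{b}_{i_{q-1}}]
\quad\text{or}\quad
[\bar{a}_{i_0},\bar{a}_{i_1},\bar{b}_{i_1},\dots,\bar{b}_{i_{q-1}}].
\]
Since $\GS(\sigma_2)=\beta$ and $\sigma_2\in(A\cap B)_{\bar{A}}\cup(A\cap B)_{\bar{B}}$, there are only two candidates: $\sigma_2=[\bar{a}_{i_0},\dots,\bar{a}_{i_{q-1}}]$ or $\sigma_2=[\bar{b}_{i_0},\dots,\bar{b}_{i_{q-1}}]$. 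This gives four cases.

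\textbf{Key step (forced motion).} In every case, any trajectory keeping ground simplex $\beta$ uses only pairs in $\mathcal{W}'$ with both entries in $S'_\beta$, up to its last step. This holds by \autoref{prop} together with Observation 2\ref{obsa}(i), since a step into $\bar{A}$ or $\bar{B}$ must be the terminal one. Each lower simplex $\widetilde{\sigma_j}$ must lie in $B'_\beta$ by Observation 1\ref{obsc}, and these are paired exactly as in the explicit lists of $\mathcal{W}'$. From $\tau=[\bar{a}_{i_0},\dots,\bar{a}_{i_r},\bar{b}_{i_r},\dots,\bar{b}_{i_{q-1}}]$ with $r\ge1$, the facets in $B_\beta$ are obtained by deleting $\bar{a}_{i_r}$ or $\bar{b}_{i_r}$.
\begin{itemize}
\item Deleting $\bar{a}_{i_r}$ gives the partner of $\tau$, or of its predecessor, so it cannot be used.
\item Deleting $\bar{b}_{i_r}$ moves the trajectory to level $r+1$.
\end{itemize}
So the trajectory marches upward to $[\bar{a}_{i_0},\dots,\bar{a}_{i_{q-1}},\bar{b}_{i_{q-1}}]$, whose only remaining facet with ground simplex $\beta$ is $[\bar{a}_{i_0},\dots,\bar{a}_{i_{q-1}}]$. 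Hence the target $(A\cap B)_{\bar{A}}$ is reached by a unique path, exactly as in Type I of the proof of Theorem~\ref{iso}.

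\textbf{Case $\tau_1=[\bar{a}_{i_0},\bar{b}_{i_0},\dots]$ with target in $\bar{B}$.} The $\bar{B}$-copy $[\bar{b}_{i_0},\dots,\bar{b}_{i_{q-1}}]$ is a facet of $\tau_1$ itself. Going through $\widetilde{\sigma_1}=[\bar{a}_{i_0},\bar{b}_{i_1},\dots]$ forces the upward march, and no later simplex contains the $\bar{B}$-copy. So the unique trajectory is the one-step trajectory $\tau_1,\sigma_2$.

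\textbf{Case $\tau_1=[\bar{a}_{i_0},\bar{a}_{i_1},\bar{b}_{i_1},\dots]$.} Its partner is $[\bar{a}_{i_1},\bar{b}_{i_1},\dots]$. The admissible next simplices are $[\bar{a}_{i_0},\bar{b}_{i_1},\dots]$, which leads to $[\bar{a}_{i_0},\bar{b}_{i_0},\dots]$, and $[\bar{a}_{i_0},\bar{a}_{i_1},\bar{b}_{i_2},\dots]$, which starts the upward march.
\begin{itemize}
\item For target $\bar{B}$: the march never contains $\bar{b}_{i_0}$, so only the branch $\tau_1,[\bar{a}_{i_0},\bar{b}_{i_1},\dots],[\bar{a}_{i_0},\bar{b}_{i_0},\dots],\sigma_2$ works.
\item For target $\bar{A}$: the branch through $[\bar{a}_{i_0},\bar{b}_{i_0},\dots]$ is a dead end, since that simplex has no further facet in $B'_\beta$ and no facet equal to the $\bar{A}$-copy (it contains $\bar{b}_{i_0}$ but not $\bar{a}_{i_1}$). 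Only the upward march works.
\end{itemize}

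\textbf{Main obstacle.} The hard part is the bookkeeping showing that at each step exactly one continuation keeps the ground simplex, with the alternatives being either the paired partner or a dead end. The case $q-1=0$ and the two special pairings of Case I (when $\tau_1$ is paired with a simplex of lower ground simplex) must be checked separately so that the "partner" exclusion is correct.
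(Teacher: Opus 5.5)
Your proposal is correct and follows essentially the same route as the paper. Both fix $\beta=\GS(\tau_1)$ and split into four cases by the two possible shapes of $\tau_1$ and the two candidates $[\bar{a}_{i_0},\dots,\bar{a}_{i_{q-1}}]$, $[\bar{b}_{i_0},\dots,\bar{b}_{i_{q-1}}]$ for $\sigma_2$, then show the path is forced: either the upward march ending at the $\bar{A}$-copy, or the short detour through $[\bar{a}_{i_0},\bar{b}_{i_0},\dots,\bar{b}_{i_{q-1}}]$ ending at the $\bar{B}$-copy. Your explicit justification that an exit into $\bar{A}$ or $\bar{B}$ must be the terminal step, and your flag on $q=1$, are small improvements on the written proof: there the paper's assertion $\widetilde{\sigma_1}\neq\sigma_2$ in Case I, Subcase I fails, since $[\bar{a}_{i_0}]$ is a facet of $[\bar{a}_{i_0},\bar{b}_{i_0}]$.
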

	\begin{proof}
		  Let, if possible, $P: \tau_1^{(q)}, \widetilde{\sigma_1}^{(q-1)}, \widetilde{\tau_1}^{(q)}, \dots , \widetilde{\sigma_k}^{(q-1)}, \widetilde{\tau_k}^{(q)}, \sigma_2^{(q-1)}$ be a $\mathcal{W}$-trajectory from $\tau_1$ to $\sigma_2$ such that $\widetilde{\sigma_i}, \widetilde{\tau_i} \in \mathbb{P}(\overline{A \cap B})$, for all $i \in [k]$ and $\GS(\tau_1)=\GS(\widetilde{\sigma_1})= \dots =\GS(\widetilde{\tau_k})= \GS(\sigma_2)$. Since $\GS(\sigma_2)=\GS(\widetilde{\tau_k})$, therefore from Observation 1\ref{obs2a}, $\widetilde{\tau_k} \in \mathbb{P}_{int}(\overline{A \cap B})$. So, $(\widetilde{\sigma_k}, \widetilde{\tau_k}) \in \mathcal{W'}$. Therefore, from \autoref{prop}, we can infer that $P$ is a trajectory of the fourth or fifth type. We will show that such a trajectory is indeed possible and is unique.
		 
		 Let $\GS(\tau_1)=\GS(\sigma_2)=[\bar{c}_{i_0}, \dots , \bar{c}_{i_{q-1}}]$. 	From Observation 2\ref{obsa}, and from the construction of $\mathcal{W}$, it follows that $\tau_1$ can be either $[\bar{a}_{i_0}, \bar{b}_{i_0}, \dots ,\bar{b}_{i_{q-1}}]$, $[\bar{a}_{i_0}, \bar{a}_{i_1},\bar{b}_{i_1}, \dots ,\bar{b}_{i_{q-1}}]$.\\
		 
		  \textbf{Case I:} Let $\sigma_2 \in (A \cap B)_{\bar{A}}$, i.e., $\sigma_2=[\bar{a}_{i_0}, \dots ,\bar{a}_{i_{q-1}}] $.
		
		 \emph{Subcase I:} Let $\tau_1 = [\bar{a}_{i_0}, \bar{b}_{i_0}, \dots ,\bar{b}_{i_{q-1}}] $. Clearly, $\widetilde{\sigma_1} \ne \sigma_2$. So, $(\widetilde{\sigma_1}, \widetilde{\tau_1}) \in \mathcal{W'}$, and hence, from Observation 1\ref{obsc}, $\widetilde{\sigma_1}=[\bar{a}_{i_0}, \bar{b}_{i_1}, \dots ,\bar{b}_{i_{q-1}}]$. From the construction of $\mathcal{W'}$, $\widetilde{\tau_1}=[\bar{a}_{i_0}, \bar{a}_{i_1}, \bar{b}_{i_1}, \dots ,\bar{b}_{i_{q-1}}]$. Again, we use Observation 1\ref{obsc} to deduce that $\widetilde{\sigma_2}=[\bar{a}_{i_0}, \bar{a}_{i_1}, \bar{b}_{i_2}, \dots ,\bar{b}_{i_{q-1}}]$. So, $\widetilde{\tau_2}=[\bar{a}_{i_0}, \bar{a}_{i_1}, \bar{a}_{i_2}, \bar{b}_{i_2}, \dots ,\bar{b}_{i_{q-1}}]$. We continue in this way and observe that $\widetilde{\sigma_i} \ne \sigma_2$ for each $2 \leq i < q-1$. However, $\widetilde{\tau_{q-1}}=[\bar{a}_{i_0}, \dots ,\bar{a}_{i_{q-1}}, \bar{b}_{i_{q-1}}]$. Here, we observe that the only facet of $\widetilde{\tau_{q-1}}$, distinct from $\widetilde{\sigma_{q-1}}$ and with the same ground simplex as $\widetilde{\tau_{q-1}}$ is $\sigma_2$. So, $\widetilde{\sigma_q}=\sigma_2$. Therefore $P: \tau_1, \widetilde{\sigma_1}, \widetilde{\tau_1}, \dots ,\widetilde{\sigma_{q-1}}, \widetilde{\tau_{q-1}}, \sigma_2$ is the unique trajectory from $\tau_1$ to $\sigma_2$  with the property that the ground simplex remains same for every simplex in that trajectory.
		 
		  \emph{Subcase II:} Let, $\tau_1=[\bar{a}_{i_0}, \bar{a}_{i_1},\bar{b}_{i_1}, \dots ,\bar{b}_{i_{q-1}}]$. 
		 
		 Then, we note from Observation 1\ref{obsc}  that $\widetilde{\sigma_1}= [\bar{a}_{i_0},\bar{b}_{i_1}, \dots ,\bar{b}_{i_{q-1}}]$ or $\widetilde{\sigma_1}=[\bar{a}_{i_0}, \bar{a}_{i_1},\bar{b}_{i_2}, \dots ,\bar{b}_{i_{q-1}}]$. In the first case, $\widetilde{\tau_1}=[\bar{a}_{i_0}, \bar{b}_{i_0}, \bar{b}_{i_1}, \dots ,\bar{b}_{i_{q-1}}]$, from the construction of $\mathcal{W'}$ . However, since $\widetilde{\sigma_2} \ne \sigma_2$, therefore $(\widetilde{\sigma_2}, \widetilde{\tau_2}) \in \mathcal{W'}$ but we observe that $\widetilde{\tau_1}$ has no  facet distinct from $\widetilde{\sigma_1}$,  paired in $\mathcal{W'}$ and with the same ground simplex as $\widetilde{\tau_1}$. So the trajectory cannot continue. Therefore, $\widetilde{\sigma_1}=[\bar{a}_{i_0}, \bar{a}_{i_1},\bar{b}_{i_2}, \dots ,\bar{b}_{i_{q-1}}]$. So, $\widetilde{\tau_1}=[\bar{a}_{i_0}, \bar{a}_{i_1},\bar{a}_{i_2}, \bar{b}_{i_2}, \dots ,\bar{b}_{i_{q-1}}]$. We continue in the same way and observe that $\sigma_2 \ne \widetilde{\sigma_j}$ for $2 \leq j < (q-2)$. However, $\widetilde{\tau_{q-2}}=[\bar{a}_{i_0}, \dots ,\bar{a}_{i_{q-1}}, \bar{b}_{i_{q-1}}]$. Following the same argument as in Subcase I, we observe that, $\widetilde{\sigma_{q-1}}$ must be $\sigma_2$ and hence $P: \tau_1, \widetilde{\sigma_1}, \widetilde{\tau_1}, \dots ,\widetilde{\sigma_{q-2}}, \widetilde{\tau_{q-2}}, \sigma_2$ is the unique trajectory from $\tau_1$ to $\sigma_2$ with the property that the ground simplex remains same for every simplex in that trajectory. \\
		 
		  \textbf{Case II:}  Let $\sigma_2 \in (A \cap B)_{\bar{B}}$, i.e., $\sigma_2=[\bar{b}_{i_0}, \dots ,\bar{b}_{i_{q-1}}]$.
		 
		   \emph{Subcase I:} Let $\tau_1 = [\bar{a}_{i_0}, \bar{b}_{i_0}, \dots ,\bar{b}_{i_{q-1}}] $.
		  
		  From Observation 1\ref{obsc}, either $\widetilde{\sigma_1}=\sigma_2$, or $\widetilde{\sigma_1}=[\bar{a}_{i_0}, \bar{b}_{i_1}, \dots ,\bar{b}_{i_{q-1}}]$. In the latter case, $\widetilde{\tau_1}= [\bar{a}_{i_0}, \bar{a}_{i_1}, \bar{b}_{i_1}, \dots ,\bar{b}_{i_{q-1}}]$ (follows from the construction of $\mathcal{W}$). Then, from Observation 1\ref{obsc}, $\widetilde{\sigma_2}=[\bar{a}_{i_0}, \bar{a}_{i_1}, \bar{b}_{i_2}, \dots ,\bar{b}_{i_{q-1}}]$ and hence $\tau_2=[\bar{a}_{i_0}, \bar{a}_{i_1}, \bar{a}_{i_2}, \bar{b}_{i_2}, \dots ,\bar{b}_{i_{q-1}}]$. We continue in the same manner and show that $\widetilde{\tau_j}= [\bar{a}_{i_0},  \dots, \bar{a}_{i_j}, \bar{b}_{i_j}, \dots ,\bar{b}_{i_{q-1}}]$ for $j \geq 1$. Thus $\sigma_2$ cannot be a facet of $\widetilde{\tau_j}$ for $j \geq 1$. Therefore, $\widetilde{\sigma_1}=\sigma_2$ and $\tau_1, \sigma_2$ is the unique trajectory from $\tau_1$ to $\sigma_2$ with the property that the ground simplex remains same for every simplex in that trajectory.
		 
		  \emph{Subcase II:} Let $\tau_1=[\bar{a}_{i_0}, \bar{a}_{i_1},\bar{b}_{i_1}, \dots ,\bar{b}_{i_{q-1}}]$. 
		 
		 From Observation 1\ref{obsc}, $\widetilde{\sigma_1}= [\bar{a}_{i_0}, \bar{a}_{i_1},\bar{b}_{i_2}, \dots ,\bar{b}_{i_{q-1}}] $ or $\widetilde{\sigma_1}=[\bar{a}_{i_0},\bar{b}_{i_1}, \dots ,\bar{b}_{i_{q-1}}]$. In the first case, $\widetilde{\tau_1}= [\bar{a}_{i_0}, \bar{a}_{i_1},\bar{a}_{i_2},\bar{b}_{i_2}, \dots ,\bar{b}_{i_{q-1}}]$, from the construction of $\mathcal{W'}$. We continue in the same way to show that $\widetilde{\tau_{j}}=[\bar{a}_{i_0}, \dots , \bar{a}_{i_{j+1}}, \bar{b}_{j+1}, \dots , \bar{b}_{i_{q-1}}]$ for $j \geq 1$. Therefore, $\sigma_2$ cannot be the facet of any $\widetilde{\tau_j}$ for $j \geq 1$. So $\widetilde{\sigma_1}=[\bar{a}_{i_0},\bar{b}_{i_1}, \dots ,\bar{b}_{i_{q-1}}]$ and hence, $\widetilde{\tau_1}=[\bar{a}_{i_0}, \bar{b}_{i_0},\bar{b}_{i_1}, \dots ,\bar{b}_{i_{q-1}}]$, from the construction of $\mathcal{W'}$. The only facet of $\widetilde{\tau_1}$ distinct from $\widetilde{\sigma_1}$ and with the same ground simplex is $\sigma_2$. Therefore $\widetilde{\sigma_2}= \sigma_2$ and $\tau_1, \widetilde{\sigma_1}, \widetilde{\tau_1}, \sigma_2$ is the unique trajectory from $\tau_1$ to $\sigma_2$ with the property that the ground simplex remains same for every simplex in that trajectory.

	\end{proof}
	
	 Let $\tau_1^{(q)}, \sigma_2^{(q-1)} \in \widetilde{X}$ such that they satisfy the conditions of \autoref{prop1} and \autoref{prop2}. Then the unique trajectory from $\tau_1$ to $\sigma_2$ with the ground simplex preserving property as stated in \autoref{prop1} and \autoref{prop2}, is denoted by $P_{(\tau_1, \sigma_2)}$.
	
	Now, we make the following observations (already implicit in the proofs of the above propositions), for our future reference.
	
	\begin{obs}
		Let $\tau_1 \in \mathbb{P}_{int}(\overline{A \cap B})$ such that $\tau_1 \in \Crit_q^{\mathcal{W}}(\widetilde{X})$, or  $(\sigma_1^{(q-1)}, \tau_1^{(q)}) \in \mathcal{W}$ such that $\GS(\sigma_1) \neq \GS(\tau_1)$. Let $ \sigma_2 \in \mathbb{P}_{int}(\overline{A \cap B})$ such that $\sigma_2 \ne \sigma_1$ and $\sigma_2 \in \Crit_{q-1}^{\mathcal{W}}(\widetilde{X})$, or $(\sigma_2^{(q-1)}, \tau_2^{(q)}) \in \mathcal{W}$ such that $\GS(\sigma_2) \ne \GS(\tau_2)$. Suppose $\GS(\tau_1)=[\bar{c}_{i_0}, \dots ,\bar{c}_{i_{q-1}}]$, $\GS(\sigma_2)= [\bar{c}_{i_0}, \dots, \widehat{\bar{c}_{i_m}} , \dots ,\bar{c}_{i_{q-1}}]$. Then,
		
		\begin{enumerate}[label=(\roman*)]
			\item For $\tau_1=[\bar{a}_{i_0}, \bar{b}_{i_0}, \dots ,\bar{b}_{i_{q-1}}]$,\label{obs3a}
			\begin{enumerate}
				\item if $m=0$, then $\sigma_2=[\bar{a}_{i_1},\bar{b}_{i_1}, \dots ,\bar{b}_{i_{q-1}}]$ and $P_{(\tau_1, \sigma_2)}: \tau_1,\widetilde{\sigma_1}, \widetilde{\tau_1}, \sigma_2$, where, $\widetilde{\sigma_1}=[\bar{a}_{i_0},\bar{b}_{i_1}, \dots ,\bar{b}_{i_{q-1}}]$, $\widetilde{\tau_1}=[\bar{a}_{i_0}, \bar{a}_{i_1}, \bar{b}_{i_1}, \dots ,\bar{b}_{i_{q-1}}]$.
				\item if $m > 0$, then $\sigma_2 = [\bar{a}_{i_0}, \bar{b}_{i_0}, \dots, \widehat{\bar{b}_{i_m}}, \dots ,\bar{b}_{i_{q-1}}]$. In this case, $P_{(\tau_1, \sigma_2)}: \tau_1,\sigma_2$.
			\end{enumerate}
	\item  For $\tau_1=[\bar{a}_{i_0}, \bar{a}_{i_1}, \bar{b}_{i_1}, \dots ,\bar{b}_{i_{q-1}}]$,  $m > 0$ and $\sigma_2= [\bar{a}_{i_0}, \bar{b}_{i_0}, \dots, \widehat{\bar{b}_{i_m}}, \dots ,\bar{b}_{i_{q-1}}]$. In this case, $P_{(\tau_1, \sigma_2)}: \tau_1,\widetilde{\sigma_1}, \widetilde{\tau_1}, \sigma_2$, where, $\widetilde{\sigma_1}=[\bar{a}_{i_0},\bar{b}_{i_1}, \dots ,\bar{b}_{i_{q-1}}]$, $\widetilde{\tau_1}=[\bar{a}_{i_0}, \bar{b}_{i_0}, \bar{b}_{i_1}, \dots ,\bar{b}_{i_{q-1}}]$. \label{obs3b}

		\end{enumerate}
		(These follow from the proof of \autoref{prop1}).
	\end{obs}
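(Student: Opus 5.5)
The observation is a bookkeeping restatement of the case analysis in the proof of \autoref{prop1}. I would prove it by first pinning down the explicit forms of $\tau_1$ and $\sigma_2$, and then reading off the trajectory $P_{(\tau_1,\sigma_2)}$ from that proof, with uniqueness supplied by \autoref{prop1} itself.

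\emph{Step 1: the form of $\tau_1$.} By Observation 2\ref{obsd}, a critical simplex of $\mathbb{P}_{int}(\overline{A \cap B})$ lies in $A_{\GS(\tau_1)}$ and has the form $[\bar{a}_{i_0},\bar{b}_{i_0},\dots,\bar{b}_{i_{q-1}}]$. If instead $\tau_1$ is the upper member of a pair with $\GS(\sigma_1)\neq\GS(\tau_1)$, Observation 2\ref{obsa}(ii) gives the two possible forms $[\bar{a}_{i_0},\bar{b}_{i_0},\dots,\bar{b}_{i_{q-1}}]$ and $[\bar{a}_{i_0},\bar{a}_{i_1},\bar{b}_{i_1},\dots,\bar{b}_{i_{q-1}}]$. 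These are exactly the two cases (i) and (ii) of the observation.

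\emph{Step 2: the form of $\sigma_2$.} The same two observations apply to $\sigma_2$: it is critical in the interior, or it is the lower member of a ground-changing pair. In either case $\sigma_2$ has the form $[\bar{a}_{l_0},\bar{b}_{l_0},\dots,\bar{b}_{l_{q-2}}]$, where $l_0<\dots<l_{q-2}$ enumerate $\{i_0,\dots,i_{q-1}\}\setminus\{i_m\}$. Splitting on $m$ gives:
\begin{itemize}
\item if $m=0$, then $l_0=i_1$ and $\sigma_2=[\bar{a}_{i_1},\bar{b}_{i_1},\dots,\bar{b}_{i_{q-1}}]$;
\item if $m>0$, then $l_0=i_0$ and $\sigma_2=[\bar{a}_{i_0},\bar{b}_{i_0},\dots,\widehat{\bar{b}_{i_m}},\dots,\bar{b}_{i_{q-1}}]$.
\end{itemize}
In case (ii), $\tau_1$ is paired in $\mathcal{W}'$ with $[\bar{a}_{i_1},\bar{b}_{i_1},\dots,\bar{b}_{i_{q-1}}]$; this follows from the Case I, item 1 pairing of the construction of $\mathcal{W}'$. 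That simplex is precisely the $m=0$ candidate, and $\sigma_2\neq\sigma_1$ is assumed, so $m=0$ is excluded in case (ii). This is the only point needing care, and I expect it to be the main obstacle.

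\emph{Step 3: exhibit the trajectories.} In each case I would check directly that the listed sequence is a $\mathcal{W}$-trajectory with constant ground simplex until $\sigma_2$.
\begin{itemize}
\item In (i) with $m=0$: $\widetilde{\sigma_1}$ is a facet of $\tau_1$ lying in $B_{\GS(\tau_1)}$, and it is not $\tau_1$'s partner. It is paired with $\widetilde{\tau_1}$ by item 2(b) of Case I, or by the critical-simplex rule. Finally, $\sigma_2$ is a facet of $\widetilde{\tau_1}$, obtained by deleting $\bar{a}_{i_0}$, and it is not $\widetilde{\tau_1}$'s partner.
\item In (i) with $m>0$: $\sigma_2$ is visibly a facet of $\tau_1$. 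It is not $\tau_1$'s partner, since that partner is excluded by $\sigma_2\neq\sigma_1$.
\item In (ii): $\widetilde{\sigma_1}=[\bar{a}_{i_0},\bar{b}_{i_1},\dots]$ is a facet of $\tau_1$ and is paired with $\widetilde{\tau_1}=[\bar{a}_{i_0},\bar{b}_{i_0},\dots]$. Deleting $\bar{b}_{i_m}$ from $\widetilde{\tau_1}$ yields $\sigma_2$. The incidences are nonzero, and the non-pairing conditions come from Observation 2\ref{obsa}.
\end{itemize}

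\emph{Step 4: conclude.} Since \autoref{prop1} asserts uniqueness of a ground-simplex-preserving trajectory from $\tau_1$ to $\sigma_2$, each sequence exhibited in Step 3 must equal $P_{(\tau_1,\sigma_2)}$. This matches the subcases of that proof.
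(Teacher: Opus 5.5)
Your proposal is correct and follows the paper's route. The paper reads these trajectories off the case analysis in the proof of \autoref{prop1}, and your Steps 1--2 reproduce that proof's opening: the two forms of $\tau_1$, the form of $\sigma_2$, and the exclusion of $m=0$ in (ii) via $\sigma_2\neq\sigma_1$. The only difference is that, instead of re-tracing the trajectory step by step, you check the candidate directly and then invoke the uniqueness part of \autoref{prop1}.

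One justification in (i) with $m=0$ is wrong, although the claim itself is true. The pair $(\widetilde{\sigma_1},\widetilde{\tau_1})$ does not come from item 2(b) of Case I, which only covers $r\geq 1$. Worse, if $\GS(\tau_1)$ were the upper member of a Case I pair, item 2(a) would pair $\widetilde{\sigma_1}$ with $\tau_1$ itself. The correct argument uses the hypothesis on $\tau_1=[\bar{a}_{i_0},\bar{b}_{i_0},\dots,\bar{b}_{i_{q-1}}]$: it is either critical or the upper member of a ground-changing pair. In the first case $\GS(\tau_1)$ is $\mathcal{W}_{\overline{A\cap B}}$-critical, and the critical rule with $r=1$ applies. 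In the second case $\GS(\tau_1)$ is the upper member of a Case II pair, and Case II, item 2 with $r=0$ applies. Either way, $\widetilde{\sigma_1}$ is paired with $\widetilde{\tau_1}$.
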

	
	\begin{obs}
			Let $\tau_1 \in \mathbb{P}_{int}(\overline{A \cap B})$ such that $\tau_1\in \Crit_q^{\mathcal{W}}(\widetilde{X})$, or  $(\sigma_1^{(q-1)}, \tau_1^{(q)}) \in \mathcal{W}$ such that $\GS(\sigma_1) \neq \GS(\tau_1)$. Let $\sigma_2^{(q-1)} \in (A \cap B)_{\bar{A}} \cup (A \cap B)_{\bar{B}}$ and $\GS(\tau_1)=\GS(\sigma_2)= [\bar{c}_{i_0}, \dots , \bar{c}_{i_{q-1}}]$. Then,
			
			\begin{enumerate}[label=(\roman*)]
				\item For $\sigma_2=[\bar{a}_{i_0}, \dots, \bar{a}_{i_{q-1}}]$,\label{obs4a}
				\begin{enumerate}
					\item If $\tau_1=[\bar{a}_{i_0}, \bar{b}_{i_0}, \dots , \bar{b}_{i_{q-1}}]$, then $P_{(\tau_1, \sigma_2)}: \tau_1, \widetilde{\sigma_1}, \widetilde{\tau_1}, \dots, \widetilde{\sigma_{q-1}}, \widetilde{\tau_{q-1}}, \sigma_2$, where, $\widetilde{\sigma_1}= [\bar{a}_{i_0}, \bar{b}_{i_1}, \dots , \bar{b}_{i_{q-1}}]$, $\widetilde{\tau_1}=[\bar{a}_{i_0}, \bar{a}_{i_1}, \bar{b}_{i_1}, \dots , \bar{b}_{i_{q-1}}]$, $\widetilde{\sigma_2}=[\bar{a}_{i_0}, \bar{a}_{i_1} \bar{b}_{i_2}, \dots , \bar{b}_{i_{q-1}}]$, $\dots, \widetilde{\sigma_j}= [\bar{a}_{i_0}, \dots , \bar{a}_{i_{j-1}}, \bar{b}_{i_j}, \dots , \bar{b}_{i_{q-1}}]$, $\widetilde{\tau_j}=[\bar{a}_{i_0}, \dots  \bar{a}_{i_j}, \bar{b}_{i_j}, \dots , \bar{b}_{i_{q-1}}]$, \dots , $\widetilde{\sigma_{q-1}}=[\bar{a}_{i_0}, \dots ,\bar{a}_{i_{q-2}}, \bar{b}_{i_{q-1}}]$, $\widetilde{\tau_{q-1}}=[\bar{a}_{i_0}, \dots ,\bar{a}_{i_{q-1}}, \bar{b}_{i_{q-1}}]$. \\
					
					\item  If $\tau_1=[\bar{a}_{i_0}, \bar{a}_{i_1}, \bar{b}_{i_1}, \dots , \bar{b}_{i_{q-1}}]$, then $P_{(\tau_1, \sigma_2)}: \tau_1, \widetilde{\sigma_1}, \widetilde{\tau_1}, \dots, \widetilde{\sigma_{q-2}}, \widetilde{\tau_{q-2}}, \sigma_2$, where, \\ $\widetilde{\sigma_1}= [\bar{a}_{i_0}, \bar{a}_{i_1}, \bar{b}_{i_2}, \dots , \bar{b}_{i_{q-1}}]$, $\widetilde{\tau_1}=[\bar{a}_{i_0}, \bar{a}_{i_1}, \bar{a}_{i_2}, \bar{b}_{i_2}, \dots , \bar{b}_{i_{q-1}}]$, $\widetilde{\sigma_2}=[\bar{a}_{i_0}, \bar{a}_{i_1} \bar{b}_{i_2}, \dots , \bar{b}_{i_{q-1}}]$, $\dots, \widetilde{\sigma_j}= [\bar{a}_{i_0}, \dots , \bar{a}_{i_{j}}, \bar{b}_{i_{j+1}}, \dots , \bar{b}_{i_{q-1}}]$, $\widetilde{\tau_j}=[\bar{a}_{i_0}, \dots  \bar{a}_{i_{j+1}}, \bar{b}_{i_{j+1}}, \dots , \bar{b}_{i_{q-1}}]$, $\dots , \widetilde{\sigma_{q-2}}=[\bar{a}_{i_0}, \dots ,\bar{a}_{i_{q-2}}, \bar{b}_{i_{q-1}}]$, $\widetilde{\tau_{q-2}}=[\bar{a}_{i_0}, \dots ,\bar{a}_{i_{q-2}},\bar{a}_{i_{q-1}}, \bar{b}_{i_{q-1}}]$. 
				\end{enumerate}
				
				\item  For $\sigma_2=[\bar{b}_{i_0}, \dots , \bar{b}_{i_{q-1}}]$,\label{obs4b}
				
				\begin{enumerate}
					\item If $\tau_1=[\bar{a}_{i_0}, \bar{b}_{i_0}, \dots , \bar{b}_{i_{q-1}}]$, then $P_{(\tau_1, \sigma_2)}: \tau_1, \sigma_2$.
					
					\item  If $\tau_1=[\bar{a}_{i_0}, \bar{a}_{i_1}, \bar{b}_{i_1}, \dots , \bar{b}_{i_{q-1}}]$, then $P_{(\tau_1, \sigma_2)}: \tau_1, \widetilde{\sigma_1}, \widetilde{\tau_1}, \sigma_2$, where, $\widetilde{\sigma_1}=[\bar{a}_{i_0}, \bar{b}_{i_1}, \dots , \bar{b}_{i_{q-1}}]$, $\widetilde{\tau_1}=[\bar{a}_{i_0}, \bar{b}_{i_0}, \bar{b}_{i_1}, \dots , \bar{b}_{i_{q-1}}]$.
					
				\end{enumerate}
				
			\end{enumerate}
			(These follow from the proof of \autoref{prop2}).
	\end{obs}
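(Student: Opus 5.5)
We prove this observation by combining the uniqueness part of \autoref{prop2} with a direct check. By \autoref{prop2}, for $\tau_1$ and $\sigma_2$ as in the hypotheses there is exactly one $\mathcal{W}$-trajectory from $\tau_1$ to $\sigma_2$ along which the ground simplex stays equal to $\gamma:=[\bar{c}_{i_0}, \dots ,\bar{c}_{i_{q-1}}]$. It therefore suffices to show that each of the four listed sequences is a genuine $\mathcal{W}$-trajectory, all of whose simplices lie in $S_\gamma$. Equality with $P_{(\tau_1,\sigma_2)}$ then follows from uniqueness, and no separate exclusion of alternative paths is needed.

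The first step is to pin down the pairs of $\mathcal{W}'$ inside $S_\gamma'$, according to the status of $\gamma$ in $\mathcal{W}_{\overline{A \cap B}}$. There are three possibilities.
\begin{itemize}
\item If $\gamma$ is critical, the pairs are $([\bar{a}_{i_0}, \dots ,\bar{a}_{i_{r-1}}, \bar{b}_{i_r}, \dots ,\bar{b}_{i_{q-1}}], [\bar{a}_{i_0}, \dots ,\bar{a}_{i_r}, \bar{b}_{i_r}, \dots ,\bar{b}_{i_{q-1}}])$ for $1\le r\le q-1$.
\item If $\gamma$ is the lower simplex of a pair of $\mathcal{W}_{\overline{A \cap B}}$, the same pairs occur, by Case I.3 and Case II.3 of the construction, after relabelling the indices as $l_0<\dots<l_{q-1}$.
\item If $\gamma$ is the upper simplex of a pair $(\alpha,\gamma)$, Case II.2 ($j>0$) gives the same chain of pairs, now also including $r=0$. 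Case I.2 ($j=0$) instead gives the pair $([\bar{a}_{i_0}, \bar{b}_{i_1}, \dots ], [\bar{a}_{i_0},\bar{b}_{i_0},\bar{b}_{i_1}, \dots ])$ together with the chain for $r\ge 1$.
\end{itemize}

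Next we use the hypothesis on $\tau_1$: it is critical, or it is paired with a simplex of different ground simplex. By Observation 2\ref{obsa} and \ref{obsd}, this rules out exactly the configurations in which the listed sequences would use a pair that is not in $\mathcal{W}$. For instance, in the $j=0$ upper case, $[\bar{a}_{i_0},\bar{b}_{i_0}, \dots]$ is paired with $[\bar{a}_{i_0},\bar{b}_{i_1},\dots]$, which has the same ground simplex, so it cannot serve as $\tau_1$. In that case the only admissible $\tau_1$ is $[\bar{a}_{i_0},\bar{a}_{i_1},\bar{b}_{i_1},\dots]$, which falls under (i)(b) and (ii)(b), and these do use the pair $([\bar{a}_{i_0},\bar{b}_{i_1},\dots],[\bar{a}_{i_0},\bar{b}_{i_0},\bar{b}_{i_1},\dots])$. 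In each remaining case we then verify three conditions step by step:
\begin{enumerate}[label=(\arabic*)]
\item every $\widetilde{\sigma_j}$ is a facet of the preceding simplex, obtained by deleting one $\bar{b}$-vertex;
\item $(\widetilde{\sigma_j},\widetilde{\tau_j})$ belongs to the chain of pairs described above;
\item $\widetilde{\sigma_j}$ is not paired with the preceding $\tau$.
\end{enumerate}
Condition (3) holds automatically, since the preceding $\tau$ is either $\tau_1$ (critical, or paired outside $S_\gamma$) or is paired with the previous $\widetilde{\sigma}$. We also verify that the terminal simplex $\sigma_2$ is a facet of the last $\tau$ and is not paired with it. This holds because $\sigma_2 \in (A \cap B)_{\bar{A}}$ or $(A \cap B)_{\bar{B}}$ is paired only within $\bar{A}$ or $\bar{B}$, if at all.

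The main obstacle is this case bookkeeping: matching each of (i)(a), (i)(b), (ii)(a) and (ii)(b) with the correct pairing scheme for $\gamma$, and confirming that the excluded configuration never produces one of the listed sequences. Once this is settled, the conclusion is immediate from \autoref{prop2}, exactly as the proof of that proposition already traces each case.
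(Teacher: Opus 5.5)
Your argument is correct, but it is organized differently from the paper's. The paper reads these sequences off the forward tracing in the proof of Proposition~\ref{prop2}. Starting at $\tau_1$, it uses Observations 1(c) and 2(a) to show that at every step the next simplex is forced, so existence, uniqueness and the explicit shape come out together.

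You instead take the uniqueness clause of Proposition~\ref{prop2} as a black box and only check that each listed sequence is a $\mathcal{W}$-trajectory staying inside $S_\gamma$. This makes the argument purely a verification. It also makes explicit something the paper uses only tacitly (``from the construction of $\mathcal{W}'$''): the hypothesis on $\tau_1$ fixes which pairing scheme governs $S_\gamma'$.
\begin{itemize}
\item If $\tau_1=[\bar{a}_{i_0},\bar{b}_{i_0},\dots,\bar{b}_{i_{q-1}}]$, then $\gamma$ is critical or the upper simplex of a $j>0$ pair.
\item If $\tau_1=[\bar{a}_{i_0},\bar{a}_{i_1},\bar{b}_{i_1},\dots,\bar{b}_{i_{q-1}}]$, then $\gamma$ is the upper simplex of a $j=0$ pair.
\end{itemize}
The price is that excluding competing branches is not avoided but delegated to Proposition~\ref{prop2}, whose proof is exactly that tracing. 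An example is the detour $\tau_1,[\bar{a}_{i_0},\bar{b}_{i_1},\dots],[\bar{a}_{i_0},\bar{b}_{i_0},\bar{b}_{i_1},\dots],\dots$ in case (i)(b), which can only end at $[\bar{b}_{i_0},\dots,\bar{b}_{i_{q-1}}]$.

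A few bookkeeping slips need fixing.
\begin{itemize}
\item Your third bullet silently switches to the paper's indexing of Cases I.2(b) and II.2. In the indexing of your first bullet, ``$r=0$'' would be the pair $([\bar{b}_{i_0},\dots,\bar{b}_{i_{q-1}}],[\bar{a}_{i_0},\bar{b}_{i_0},\dots,\bar{b}_{i_{q-1}}])$. That pair is not in $\mathcal{W}$, and if it were, (ii)(a) would fail. Likewise, ``the chain for $r\geq 1$'' would pair $[\bar{a}_{i_0},\bar{b}_{i_1},\dots,\bar{b}_{i_{q-1}}]$ twice.
\item Condition (1) is too strong. In (ii)(b), $\widetilde{\sigma_1}$ arises from $\tau_1$ by deleting $\bar{a}_{i_1}$, not a $\bar{b}$-vertex; only facethood matters.
\item Only (ii)(b), not (i)(b), uses the pair $([\bar{a}_{i_0},\bar{b}_{i_1},\dots],[\bar{a}_{i_0},\bar{b}_{i_0},\bar{b}_{i_1},\dots])$.
\item A literal check of (i)(b) will fail at the displayed $\widetilde{\sigma_2}$, which is a typo repeating $\widetilde{\sigma_1}$. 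Use the general formula $\widetilde{\sigma_j}=[\bar{a}_{i_0},\dots,\bar{a}_{i_j},\bar{b}_{i_{j+1}},\dots,\bar{b}_{i_{q-1}}]$ instead.
\end{itemize}
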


		\subsection{Isomorphism between the simplicial homology of $\widetilde{X}$ and the homology groups of $\mathcal{D}_{\#}(X)$}

 We recall from Section~\ref{intro} that   $D_0(X)=\{\Crit^{\mathcal{W}_{\bar{A}}}_0(\bar{A}) \cup \Crit^{\mathcal{W}_{\bar{B}}}_0(\bar{B})\}$,
$D_q (X)= \{\Crit^{\mathcal{W}_{\bar{A}}}_q(\bar{A}) \cup \Crit^{\mathcal{W}_{\bar{B}}}_q(\bar{B}) \cup \Crit^{\mathcal{W}_{\overline{A \cap B}}}_{q-1}(\overline{A \cap B})\}$, $q \geq 1$. We also recall that the generators of $C_q^{\mathcal{W}}(\widetilde{X}, \mathbb{Z})$ are given by $\Crit_q^{\mathcal{W}}(\widetilde{X})= \{C_q^{\bar{A}} \cup C_q^{\bar{B}} \cup C_q^{int}\}$. Now we construct an isomorphism $f_{\#}$ between $C_{\#}^{\mathcal{W}}(\widetilde{X}, \mathbb{Z})$ and $\mathcal{D}_{\#}(X)$.\\

 Let $f_q: C_{q}^{\mathcal{W}}(\widetilde{X}, \mathbb{Z}) \rightarrow \mathcal{D}_{q}(X)$ in the following manner. It suffices to define the map on the set of generators. Let $\alpha \in \Crit_q^{\mathcal{W}}(\widetilde{X})$. \\
If $\alpha \in C_q^{\bar{A}}$ or $\alpha \in C_q^{\bar{B}}$, then $\alpha = \sigma_{\bar{A}}$, for some $\sigma \in A$, or $\alpha= \sigma_{\bar{B}}$, for some $\sigma \in B$ respectively. So, we note that $\alpha \in \Crit_q^{\mathcal{W}_{\bar{A}}}(\bar{A})$ or $\alpha \in \Crit_q^{\mathcal{W}_{\bar{B}}}(\bar{B})$ respectively. Therefore, in this case, we define $f_q(\alpha)=\alpha$.

If $\alpha \in C_q^{int}$, then we note from Observation 2\ref{obsd} that $\GS(\alpha) \in \Crit^{\mathcal{W}_{\overline{A \cap B}}}_{q-1}(\overline{A \cap B})\}$. So we define $f_q(\alpha)= \GS(\alpha)$.

 Now, let $g_q: \mathcal{D}_{q}(X) \rightarrow C_{q}^{\mathcal{W}}(\widetilde{X}, \mathbb{Z})$. As before, it suffices to define $g_q$ on the set of generators. Let $\alpha \in D_{q}(X) $. Then we define,
\begin{equation*}
	g_q(\alpha) = \begin{cases}
		\sigma_{\bar{A}} & \alpha\in \Crit_q^{\mathcal{W}_{\bar{A}}}(\bar{A}) \text{, and }\alpha=\sigma_{\bar{A}},\\
		\sigma_{\bar{B}} & \alpha \in \Crit_q^{\mathcal{W}_{\bar{B}}}(\bar{B}) \text{, and }\alpha=\sigma_{\bar{B}},\\
		[\bar{a}_{i_0}, \bar{b}_{i_0}, \dots \bar{b}_{i_{q-1}}] & \alpha \in \Crit_{q-1}^{\mathcal{W}_{\overline{A \cap B}}}(\overline{A \cap B}) \text{, where } \alpha =[\bar{c}_{i_0}, \dots \bar{c}_{i_{q-1}}]. \\
	\end{cases}
\end{equation*}

 It suffices to prove that $f_q \circ g_q = \operatorname{id}$ and $g_q \circ f_q= \operatorname{id}$ on the set of generators.

 If $\sigma_{\bar{A}} \in \Crit_q^{\mathcal{W}_{\bar{A}}}(\bar{A})$ or $\sigma_{\bar{B}} \in \Crit_q^{\mathcal{W}_{\bar{B}}}(\bar{B})$, then $f_q$ and $g_q$ are simply identity maps, which are isomorphisms. However, if $\alpha=[\bar{c}_{i_0}, \dots \bar{c}_{i_{q-1}}] \in \Crit_{q-1}^{\mathcal{W}_{\overline{A \cap B}}}(\overline{A \cap B})$, then $g_q(\alpha)=[\bar{a}_{i_0},\bar{b}_{i_0}, \dots ,\bar{b}_{i_{q-1}}]$. We denote this as $\sigma$. So, $f_q \circ g_q(\alpha)=\GS(\sigma)=[\bar{c}_{i_0}, \dots ,\bar{c}_{i_{q-1}}]$.

 Let $\alpha \in C_q^{int}$. Now, $g_q \circ f_q (\alpha)=g_q(\GS(\alpha))$. We know that $\GS(\alpha) \in \Crit_{q-1}^{\mathcal{W}_{\overline{A \cap B}}}(\overline{A \cap B})$ from Observation 2\ref{obsd}. So, if $\GS(\alpha)=[\bar{c}_{i_0}, \dots ,\bar{c}_{i_{q-1}}]$, then $g_q(\GS(\alpha))= [\bar{a}_{i_0},\bar{b}_{i_0}, \dots,\bar{b}_{i_{q-1}}] \in C^{int}_q$. Since both $\alpha$ and $[\bar{a}_{i_0},\bar{b}_{i_0} \dots ,\bar{b}_{i_{q-1}}]$ are in $S_{\GS(\alpha)}'$, therefore, from Observation 2\ref{obsd}, $\alpha=[\bar{a}_{i_0},\bar{b}_{i_0} \dots ,\bar{b}_{i_{q-1}}]$. Thus, $g_q(\GS(\alpha))=\alpha$.

 Hence, we have established an isomorphism between  $C_q(\widetilde{X}, \mathbb{Z})$ and $\mathcal{D}_q(X)$.\\

\begin{lem}\label{bij}
	Let $\tau \in \Crit_q^{\mathcal{W}}(\widetilde{X})$ and $\sigma \in \Crit_{q-1}^{\mathcal{W}}(\widetilde{X})$. Then, there exists a bijection between $\Gamma(\tau, \sigma)$ and  $\MV(f_q(\tau), f_{q-1}(\sigma))$.
\end{lem}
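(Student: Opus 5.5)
The bijection will be built case by case, according to which of the sets $C_q^{\bar{A}}$, $C_q^{\bar{B}}$, $C_q^{int}$ contain $\tau$ and $\sigma$. The types are controlled by \autoref{prop}. We then read off the corresponding MV trajectory by applying $\GS$ to the portion of the $\mathcal{W}$-trajectory lying in $\mathbb{P}_{int}(\overline{A \cap B})$ and keeping the portion lying in $\bar{A}$ or $\bar{B}$ unchanged.

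\textbf{Critical cells in $\bar{A}$ or $\bar{B}$.} If $\tau \in C_q^{\bar{A}}$, then by \autoref{prop} every $\mathcal{W}$-trajectory starting at $\tau$ is of the first type. It therefore uses only pairs of $\mathcal{W}_{\bar{A}}$ and ends in $\bar{A}$, so $\sigma \in C_{q-1}^{\bar{A}}$. Such a trajectory is literally an MV trajectory of case 1, and the map is the identity. The case $\tau \in C_q^{\bar{B}}$ is identical and gives case 2. If $\tau \in C_q^{\bar{A}}\cup C_q^{\bar{B}}$ and $\sigma \in C_{q-1}^{int}$, both sets of trajectories are empty: there are no $\mathcal{W}$-trajectories (by \autoref{prop}), and no MV trajectories by definition.

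\textbf{$\tau \in C_q^{int}$ and $\sigma \in C_{q-1}^{int}$.} The trajectory is of the third type, so every pair used lies in $\mathcal{W}'$. Following the proof of acyclicity, the ground simplex changes only at lower simplices $\sigma_{j_i}$ with $\GS(\sigma_{j_i})\neq \GS(\tau_{j_i})$. By Observation 2\ref{obse}, each such jump pair projects to a pair of $\mathcal{W}_{\overline{A\cap B}}$, and by Observation 1\ref{obsb} the successive ground simplices are facets. Hence $\GS(\tau_0),\GS(\sigma_{j_1}),\GS(\tau_{j_1}),\dots,\GS(\sigma)$ is a $\mathcal{W}_{\overline{A\cap B}}$-trajectory from $f_q(\tau)$ to $f_{q-1}(\sigma)$, which is case 3. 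The non-pairing condition $(\GS(\sigma_{j_i}),\GS(\tau_{j_{i-1}}))\notin\mathcal{W}_{\overline{A\cap B}}$ is checked exactly as in the acyclicity proof.

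For the inverse, take an MV trajectory of case 3. Each pair $(\alpha_i,\beta_i)$ is lifted to the unique $\mathcal{W}'$-pair of Observation 2\ref{obse}. Consecutive lifted cells are then joined by the unique ground-simplex-preserving segments $P_{(\cdot,\cdot)}$ given by \autoref{prop1}. Uniqueness in \autoref{prop1} shows that the two maps are mutually inverse, since any $\mathcal{W}$-trajectory decomposes uniquely into such segments between consecutive ground-simplex jumps.

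\textbf{$\tau \in C_q^{int}$ and $\sigma \in C_{q-1}^{\bar{A}}$ (or $C_{q-1}^{\bar{B}}$).} The trajectory is of the fourth (or fifth) type, with first entry into $\bar{A}$ at some $\sigma_r$. By Observation 1\ref{obs2a}, $\sigma_r\in (A\cap B)_{\bar{A}}$ and $\GS(\sigma_r)=\GS(\tau_{r-1})$. The part before $\sigma_r$ projects, as in the previous case, to a $\mathcal{W}_{\overline{A\cap B}}$-trajectory ending at $\GS(\tau_{r-1})$, with the final segment supplied uniquely by \autoref{prop2}. The part from $\sigma_r$ onward is a $\mathcal{W}_{\bar{A}}$-path from $(\tau_p)_{\bar{A}}$ to $\sigma$, consisting of $(q-1)$-cells alternating with $q$-cells that are paired upward. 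This is exactly the shape of MV case 4 (respectively case 5). The inverse map lifts the $\overline{A\cap B}$ portion using \autoref{prop1} and \autoref{prop2} and then appends the $\bar{A}$ portion unchanged.

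\textbf{Main obstacle.} The main obstacle is the middle case: showing that the decomposition into ground-simplex-preserving segments is forced, and that the projected sequence satisfies all the non-pairing conditions required of a $\mathcal{W}_{\overline{A\cap B}}$-trajectory. For the forcing, I would rely on Observations 3 and 4 to pin down every segment explicitly. For the non-pairing conditions, I would reuse the distinctness argument from the acyclicity proof.
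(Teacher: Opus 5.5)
Your proposal is correct and follows essentially the same route as the paper. The paper's map $\mu$ likewise keeps first/second-type trajectories unchanged, projects third-type trajectories onto the ground simplices at the jumps, and for fourth/fifth-type trajectories projects the $\mathbb{P}_{int}(\overline{A\cap B})$ portion while keeping the $\bar A$ (or $\bar B$) tail. Its inverse $\eta$ lifts each $\mathcal{W}_{\overline{A\cap B}}$-pair via Observation 2(b) and joins the lifts with the segments of Propositions \ref{prop1} and \ref{prop2}; the two maps are mutually inverse by the uniqueness in those propositions. Your explicit remark that the mixed cases (e.g.\ $\tau\in C_q^{\bar A}$, $\sigma\in C_{q-1}^{int}$) give empty sets on both sides is a small point the paper leaves implicit.
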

\begin{proof}
	
 First we construct a map $\mu : \Gamma(\tau, \sigma) \rightarrow \MV(f_q(\tau),f_{q-1}(\sigma))$.

 Let $P \in \Gamma(\tau, \sigma)$.

 \textbf{Case I:} $P$ is a $\mathcal{W}$-trajectory of the first or second type, i.e., $\tau \in C^{\bar{A}}_q$   (or $\tau \in C^{\bar{B}}_q$).

Therefore, from Proposition~\ref{prop}, $P$ must be either a $\mathcal{W}_{\bar{A}}$-trajectory (or a $\mathcal{W}_{\bar{B}}$-trajectory). Hence $\sigma \in C^{\bar{A}}_q$ (or $\sigma \in C^{\bar{B}}_q$). So, we define $\mu(P)$ as $P$, which we observe is also an $\MV$-trajectory from $f_q(\tau)=\tau$ to $f_{q-1}(\sigma)= \sigma$.\\

 \textbf{Case II:} $P$ is a $\mathcal{W}$-trajectory of the third type.

 Therefore $\tau \in C^{int}_q$ and $P$ is of the form,
$$(\tau=)~\tau_0^{(q)}, \sigma_1^{(q-1)}, \tau_1^{(q)} \dots  ,\sigma_k^{(q-1)}, \tau_k^{(q)} ,\sigma_{k+1}^{(q-1)}~(= \sigma),$$

 where $\sigma \in C_{q-1}^{int}$.

If the ground simplex remains same for each simplex in $P$, then $\tau, \sigma \in S_{\gamma}' $ for some $\gamma \in \Crit^{\mathcal{W}_{\overline{A \cap B}}}(\overline{A \cap B})$. From Observation 2\ref{obsd}, we note that this is not possible. Therefore, there exists a simplex in $P$ whose ground simplex differs from its preceding simplices, i.e., either of the following occurs for some $i$.
$$ \GS(\tau)= \GS(\sigma_1) = \dots =\GS(\sigma_i) \neq \GS(\tau_i),$$
\begin{center}
	or
\end{center}
$$ \GS(\tau)= \GS(\sigma_1) = \dots =\GS(\tau_{i-1}) \neq \GS(\sigma_i).$$
In the first case, it follows from Observation 2\ref{obsa} that $\tau_{i-1} \in A_{\alpha}$ for some $\alpha \in \overline{A \cap B}$ and hence from Observation 1\ref{obsc}, we note that $\sigma_i \in B_{\alpha}$. We again use Observation 2\ref{obsa} to deduce that $\GS(\sigma_i) = \GS(\tau_i)$ and hence arrive at a contradiction. In the second case, the ground simplex differs for the first time at a lower dimensional simplex $\sigma_i$. Let us rename it as $\sigma_{r_1}$. 

If $\sigma_{r_1}= \sigma$, then we obtain a sequence of simplices, $P': \GS(\tau), \GS(\sigma)$. Otherwise we note from Observation 1\ref{obsc} that $\sigma_{r_1} \in A_{\alpha}$ for some $\alpha \in \overline{A \cap B}$ and from Observation 2\ref{obsa}, we note that $\GS(\sigma_{r_1}) \neq \GS(\tau_{r_1})$. Now, it is not possible that $\GS(\tau_{r_1}) = \GS(\sigma_{r_1+1})= \GS(\tau_{r_1+1})= \dots = \GS(\sigma)$. This is because, from Observation 2\ref{obse}$, (\GS(\sigma_{r_1}), \GS(\tau_{r_1})) \in \mathcal{W}_{\overline{A \cap B}}$ but $\GS(\sigma) \in \Crit^{\mathcal{W}_{\overline{A \cap B}}}(\overline{A \cap B})$, from Observation 2\ref{obsd}. So, again we follow the same argument as before and note that ground simplex changes next at a lower dimensional simplex $\sigma_{r_2}$, i.e., $\GS(\tau_i)= \GS(\sigma_{r_1+1})= \GS(\tau_{r_1+1})= \dots = \GS(\tau
_{r_2-1}) \ne \GS(\sigma_{r_2})$. We note if $\sigma_{r_2}= \sigma$ and accordingly continue the same procedure until $\sigma_{r_m}= \sigma$ for some $m$ and thus obtain the following sequence of simplices in $\overline{A \cap B}$.

$$ P': f_q(\tau_0)= \GS(\tau_0), \GS(\sigma_{r_1}), \GS(\tau_{r_1}), \dots , \GS(\sigma_{r_{m-1}}), \GS(\tau_{r_{m-1}}), \GS(\sigma)=f_{q-1}(\sigma).$$

We observe that $\GS(\tau_0)= \GS(\tau_{r_1-1}) \ne \GS(\sigma_{r_1})$. Also, for each $0 \leq i \leq (m-2)$, $\GS(\tau_{r_i})= \GS(\tau_{r_{i+1}-1}) \ne \GS(\sigma_{r_{i+1}})$. Thus, from Observation 1\ref{obsb}, we note that $\GS(\sigma_{r_1})^{(q-2)} \subseteq \GS(\tau_0)^{(q-1)}$ and $\GS(\sigma_{r_{i+1}})^{(q-2)} \subseteq \GS(\tau_{r_i})^{(q-1)}$, $1 \leq i \leq (m-2)$. Now we use Observation 2\ref{obse} to deduce that, $(\GS(\sigma_{r_i}), \GS(\tau_{r_i}))) \in \mathcal{W}_{\overline{A \cap B}}$ for $i \in [m-1]$. Since, $\GS(\tau_0), \GS(\sigma)$ are $\mathcal{W}_{\overline{A \cap B}}$-critical, therefore it follows that $(\GS(\tau_0), \GS(\sigma_{r_1})) \notin \mathcal{W}_{\overline{A \cap B}}$, $(\GS(\sigma),\GS(\tau_{r_{m-1}})) \notin \mathcal{W}_{\overline{A \cap B}}$. The fact that $(\GS(\sigma_{r_i}),\GS(\tau_{r_{i-1}})) \notin \mathcal{W}_{\overline{A \cap B}}$ for each $2 \leq i \leq (m-1)$, follows from Observation 2\ref{obse} and the acyclicity of $\mathcal{W}_{\overline{A \cap B}}$. Therefore $P' \in \MV(f_q(\tau), f_{q-1}(\sigma))$.

We define $\mu(P)$ as $P'$. It follows from the construction of $P'$, that for each $P \in \Gamma(\tau, \sigma)$, $P'$ is unique.\\

 \textbf{Case III:} $P$ is a $\mathcal{W}$-trajectory of the fourth or fifth type.

 Therefore, $\tau \in C_q^{int}$ and $P$ is either of the following forms.
$$(\tau=)~\tau_0^{(q)}, \sigma_1^{(q-1)},\tau_1^{(q)}, \dots  ,\sigma_{p}^{(q-1)}, \tau_{p}^{(q)} ,(\sigma_{p+1})_{\bar{A}}^{(q-1)}, (\tau_{p+1})_{\bar{A}}^{(q)}, \dots ,(\sigma_{p+l-1})_{\bar{A}}^{(q-1)}, (\tau_{p+l-1})_{\bar{A}}^{(q)}, (\sigma_{p+l})_{\bar{A}}^{(q-1)}~(= \sigma).$$ 
\begin{center}
or
\end{center}
$$(\tau=)~\tau_0^{(q)}, \sigma_1^{(q-1)},\tau_1^{(q)} \dots  ,\sigma_{p}^{(q-1)}, \tau_{p}^{(q)} ,(\sigma_{p+1})_{\bar{B}}^{(q-1)}, (\tau_{p+1})_{\bar{B}}^{(q)}, \dots ,(\sigma_{p+l-1})_{\bar{B}}^{(q-1)}, (\tau_{p+l-1})_{\bar{B}}^{(q)}, (\sigma_{p+l})_{\bar{B}}^{(q-1)}~(= \sigma).$$ 

 Without loss of generality, let us assume $P$ is of the first form.

We recall from the end of Section~\ref{prelim} that for a trajectory $P$, we denote a subsequence consisting of consecutive simplices $\alpha_{i_1},\dots , \alpha_{i_r}$ in $P$ as $\alpha_{i_1} P \alpha_{i_r}$.

Let $Q= \tau P (\sigma_{p+1})_{\bar{A}}$ and $R=(\sigma_{p+1})_{\bar{A}} P \sigma$. We now construct a sequence $Q'$ depending on $Q$, as follows. 

If the ground simplex remains same for each simplex in $Q$, then $Q': \GS(\tau)$. Otherwise, we follow the same argument as in Case II and show that the ground simplex changes for the first time at a lower dimensional simplex, $\sigma_{r_1}$, i.e., $ \GS(\tau)= \GS(\sigma_1) = \dots =\GS(\tau_{r_1-1}) \neq \GS(\sigma_{r_1})$. Since $(\sigma_{p+1})_{\bar{A}} \in \bar{A} $ and $\tau_{{r_1}-1} \in \mathbb{P}_{int}(\overline{A \cap B})$, therefore it follows from Observation 1\ref{obs2a} that $(\sigma_{p+1})_{\bar{A}} \ne \sigma_{r_1}$. So, we follow the same argument as in Case II to deduce that $\GS(\tau_{r_1}) \ne \GS(\sigma_{r_1})$. 


If the ground simplex changes again, then we continue the same procedure till we reach $\tau_{r_m}$ for some $m$, such that the ground simplex remains same for all the simplices in $Q$ following $\tau_{r_m}$, i.e., $\GS(\tau_{r_m}) = \GS(\sigma_{r_m+1})= \GS(\tau_{r_m+1})= \dots = \GS((\sigma_{p+1})_{\bar{A}})$. This is inevitable because, from Observation 2\ref{obs2a}, we know that $\GS(\tau_p)= \GS((\sigma_{p+1})_{\bar{A}})$. Thus we obtain a sequence of simplices,
$$Q': f_q(\tau)= \GS(\tau), \GS(\sigma_{r_1}), \GS(\tau_{r_1}), \dots ,\GS(\sigma_{r_m}), \GS(\tau_{r_m}).$$

 where $\GS(\tau_{r_m})=\GS((\sigma_{p+1})_{\bar{A}})= (\sigma_{p+1})_{\overline{A \cap B}}$ (follows from Observation 2\ref{obs2a} and Observation 2\ref{obs2b}).
 
 We define $\mu(P)$ as the sequence of simplices $Q', R$. Following analogous arguments as in Case II, and from the fact that $R$ itself is a $\mathcal{W}_{\bar{A}}$-trajectory it can be observed that $\mu(P) \in \MV(f_q(\tau), f_{q-1}(\sigma))$. In this case also, it follows from our construction that for each $P \in \Gamma(\tau, \sigma)$, $\mu(P)$ is unique.\\
 
  Next, let $\beta \in D_q(X)$, $\alpha \in D_{q-1}(X)$. We define a map $\eta: \MV(\beta, \alpha) \rightarrow \Gamma(g_q(\beta), g_{q-1}(\alpha))$.
 
 Let $P \in \MV(\beta, \alpha)$.

 \textbf{Case I:} Let $\beta \in \Crit_q^{\mathcal{W}_{\bar{A}}}(\bar{A})$ and $\alpha \in \Crit_{q-1}^{\mathcal{W}_{\bar{A}}}(\bar{A})$, or $\beta \in \Crit_q^{\mathcal{W}_{\bar{B}}}(\bar{B})$ and $\alpha \in \Crit_{q-1}^{\mathcal{W}_{\bar{B}}}(\bar{B})$.

 Then $P$ is either of the following.

$$(\beta=)~(\beta_0)_{\bar{A}}^{(q)}, (\alpha_1)_{\bar{A}}^{(q-1)}, (\beta_1)_{\bar{A}}^{(q)} \dots  ,(\alpha_k)_{\bar{A}}^{(q-1)}, (\beta_k)_{\bar{A}}^{(q)},(\alpha_{k+1})_{\bar{A}}^{(q-1)}~(=\alpha),$$
\begin{center}
	or
\end{center}
$$(\beta=) (\beta_0)_{\bar{B}}^{(q)}, (\alpha_1)_{\bar{B}}^{(q-1)}, (\beta_1)_{\bar{B}}^{(q)} \dots  ,(\alpha_k)_{\bar{B}}^{(q-1)}, (\beta_k)_{\bar{B}}^{(q)},(\alpha_{k+1})_{\bar{B}}^{(q-1)}(=\alpha).$$

 In this case, we observe that $P$ is itself a $\mathcal{W}$-trajectory of the first or second type. Hence we define $\eta(P)$ as $P$.

\textbf{Case II:} Let $\beta \in \Crit_{q-1}^{\mathcal{W}_{\overline{A \cap B}}}(\overline{A \cap B})$ and $\alpha \in \Crit_{q-2}^{\mathcal{W}_{\overline{A \cap B}}}(\overline{A \cap B})$.\\
 Then let $P$ be the following sequence of simplices.
 $$ (\beta=)~(\beta_0)_{\overline{A \cap B}}^{(q-1)}, (\alpha_1)_{\overline{A \cap B}}^{(q-2)}, (\beta_1)_{\overline{A \cap B}}^{(q-1)}, \dots , (\alpha_k)_{\overline{A \cap B}}^{(q-2)}, (\beta_k)_{\overline{A \cap B}}^{(q-1)}, (\alpha_{k+1})_{\overline{A \cap B}}^{(q-2)}~(=\alpha).$$

We know from Observation 2\ref{obsd} that there exists a unique simplex $\tau_0 \in S_{\beta}'$ such that $\tau_0$ is $\mathcal{W}$-critical and there exists a unique simplex $\sigma_{k+1} \in S_{\alpha}'$ such that $\sigma_{k+1}$ is $\mathcal{W}$-critical. So, $g_q(\beta)= \tau_0$,  $g_q(\alpha)= \sigma_{k+1}$. From Observation 2\ref{obse}, we know that for each $((\alpha_i)_{\overline{A \cap B}}, (\beta_i)_{ \overline{A \cap B}})$, there exists a unique pair $(\sigma_i, \tau_i) \in \mathcal{W'}$ such that $\GS(\sigma_i) = (\alpha_i)_{\overline{A \cap B}}$, $\GS(\tau_i)= (\beta_i)_{\overline{A \cap B}}$. Now, we use Proposition~\ref{prop1} to deduce that there exists a unique $\mathcal{W}$-trajectory from $\tau_i$ to $\sigma_{i+1}$ with the property that the ground simplex remains same for every simplex in the trajectory except $\sigma_{i+1}$ for each $0 \leq i \leq k$. We recall that such a trajectory is denoted as $P_{(\tau_i, \sigma_{i+1})}$.

We define $\eta(P)$ as $P_{(\tau_0, \sigma_1)}, P_{(\tau_1, \sigma_2)}, \dots , P_{(\tau_k, \sigma_{k+1})}$.

Since, $\tau_0, \sigma_{k+1} \in \mathbb{P}_{int}(\overline{A \cap B})$, therefore $\eta(P)$ is a $\mathcal{W}$-trajectory of the third type and hence $\eta(P) \in \Gamma(\tau, \sigma)$. We also observe that such a trajectory is unique for each $P \in \MV(\beta, \alpha)$.\\

\textbf{Case III:} Let $\beta \in \Crit_{q-1}^{\mathcal{W}_{\overline{A \cap B}}}(\overline{A \cap B})$ and $ \alpha \in \Crit_{q-1}^{\mathcal{W}_{\bar{A}}}(\bar{A})$, or $\beta \in \Crit_{q-1}^{\mathcal{W}_{\overline{A \cap B}}}(\overline{A \cap B})$ and $\alpha \in \Crit_{q-1}^{\mathcal{W}_{\bar{B}}}(\bar{B})$.

Then $P$ can be either of the following.
$$ 
\begin{aligned}
	 &(\beta=)~(\beta_0)_{\overline{A \cap B}}^{(q-1)}, (\alpha_1)_{\overline{A \cap B}}^{(q-2)}, (\beta_1)_{\overline{A \cap B}}^{(q-1)} \dots  (\alpha_p)_{\overline{A \cap B}}^{(q-2)}, (\beta_p)_{\overline{A \cap B}}^{(q-1)}, (\beta_p)_{\bar{A}}^{(q-1)}, (\gamma_p)_{\bar{A}}^{(q)}, \dots (\beta_{p+l-1})_{\bar{A}}^{(q-1)},\\ & (\gamma_{p+l-1})_{\bar{A}}^{(q)}, (\beta_{p+l})_{\bar{A}}^{(q-1)}~(=\alpha)
\end{aligned}
$$

\begin{center}
	or
\end{center}
$$  
\begin{aligned}
	&(\beta=)~(\beta_0)_{\overline{A \cap B}}^{(q-1)}, (\alpha_1)_{\overline{A \cap B}}^{(q-2)}, (\beta_1)_{\overline{A \cap B}}^{(q-1)} \dots  (\alpha_p)_{\overline{A \cap B}}^{(q-2)}, (\beta_p)_{\overline{A \cap B}}^{(q-1)}, (\beta_p)_{\bar{B}}^{(q-1)}, (\gamma_p)_{\bar{B}}^{(q)}, \dots (\beta_{p+l-1})_{\bar{B}}^{(q-1)}, \\ &(\gamma_{p+l-1})_{\bar{B}}^{(q)}, (\beta_{p+l})_{\bar{B}}^{(q-1)}~(=\alpha)
\end{aligned}
$$

 Without loss of generality, let us assume that $P$ is the first sequence of simplices.

We recall from the end of Section~\ref{prelim} that for an $\MV$-trajectory $P$, we denote a subsequence of consecutive simplices $\alpha_{i_1}, \dots , \alpha_{i_r}$ in $P$ as $\alpha_{i_1} P \alpha_{i_r}$.

Let $Q=\beta P (\beta_p)_{\overline{A \cap B}}$ and $R= (\gamma_p)_{\bar{A}} P \alpha$. Let as before, $\tau_0$ be the unique $\mathcal{W}$-critical simplex such that $\tau_0 \in S_{\beta}'$. So, $g_q(\beta)= \tau_0$. As in Case II, for each $(\alpha_i, \beta_i) \in \mathcal{W}_{\overline{A \cap B}}$, let $(\sigma_i, \tau_i) \in \mathcal{W}$ be the unique pair such that $\GS(\sigma_i)=(\alpha_i)_{\overline{A \cap B}}$, $\GS(\tau_i)=(\beta_i)_{\overline{A \cap B}}$ for each $i \in [p]$. So, we use \autoref{prop1} to obtain the trajectories $P_{(\tau, \sigma_1)}, P_{(\tau_1, \sigma_2)}, \dots P_{(\tau_{p-1}, \sigma_p)}$. 


Now, let $\sigma_{p+1}= (\beta_p)_{\bar{A}}$. We note that $\sigma_{p+1} \in (A \cap B)_{\bar{A}}$ and $(\sigma_p, \tau_p) \in \mathcal{W'}$. Also, from Observation 1\ref{obs2b}, $\GS(\tau_p)=(\beta_p)_{\overline{A \cap B}} =\GS(\sigma_{p+1})$. So we use Proposition~\ref{prop2} to obtain the trajectory $P_{(\tau_p, \sigma_{p+1})}$.
Now, we define $\eta(P)$ as $P_{(\tau, \sigma_1)}, P_{(\tau_1, \sigma_2)}, \dots , P_{(\tau_p, \sigma_{p+1})}, R$.

In each case we observe that $\eta(P)$ is a $\mathcal{W}$-trajectory of the fourth type and is unique for each $P \in \MV(\beta, \alpha)$. \\

Next we show that these maps are indeed bijections, i.e., $\eta \circ \mu (P) =P$ for each $P \in \Gamma(\tau, \sigma)$, where $\tau \in \Crit_q^{\mathcal{W}}(\widetilde{X}), \sigma \in \Crit_{q-1}^{\mathcal{W}}(\widetilde{X})$ and $\mu \circ \eta (P)$ for each $P \in \MV(\beta, \alpha)$, where $\beta \in D_q(X)$, $\alpha \in D_{q-1}(\widetilde{X})$. 

Let $P \in \Gamma(\tau, \sigma)$, where $\tau \in \Crit_q^{\mathcal{W}}(\widetilde{X}), \sigma \in \Crit_{q-1}^{\mathcal{W}}(\widetilde{X})$. From our construction of the maps, it suffices to show that $\eta \circ \mu(P)=P$, when $P$ is a $W$-trajectory of either the third, fourth or fifth type.\\

 \textbf{Case I:} $P$ is a $\mathcal{W}$-trajectory of the third type. 
Then let, 
$$P: (\tau=)~\tau_0^{(q)}, \sigma_1^{(q-1)}, \tau_1^{(q)} \dots  ,\sigma_k^{(q-1)}, \tau_k^{(q)} ,\sigma_{k+1}^{(q-1)}~(= \sigma).$$
where $\tau \in C^{int}_q$, $\sigma \in C^{int}_{q-1}$.

 Suppose, the ground simplex changes at $\sigma_{r_i}, \tau_{r_i}$, $i \in [m]$, and at $\sigma_{r_m}= \sigma$,  i.e.
\[
\begin{aligned}
	& \GS(\tau_0) = \GS(\sigma_1)= \GS(\tau_1) = \dots \GS(\tau_{r_1-1}) \ne \GS(\sigma_{r_1}) \ne \GS(\tau_{r_1}) = \dots =\GS(\tau_{r_2-1}) \\&\ne \GS(\sigma_{r_2}) \ne \GS(\tau_{r_2})= \dots = \GS(\tau_{r_{m-1}-1}) \ne \GS(\sigma_{r_{m-1}}) \ne \GS(\tau_{r_{m-1}}) = \dots =\GS(\tau_{r_m-1}) \neq \GS(\sigma_{r_m}).
\end{aligned}
\] 

Therefore, as per our construction, $\mu(P): \GS(\tau_0), \GS(\sigma_{r_1}), \GS(\tau_{r_1}), \dots, \GS(\sigma_{r_{m-1}}), \GS(\tau_{r_{m-1}}), \GS(\sigma_{r_m})$, where $(\GS(\sigma_{r_i}), \GS(\tau_{r_i})) \in \mathcal{W}_{\overline{A \cap B}}$. Now, from Observation 2\ref{obsd}, $\tau_0$, $\sigma_{r_m}$ are the unique critical simplices in $S_{\GS(\tau_0)}'$ and $S_{\GS(\sigma)}'$ respectively and for each $i \in [m]$. From Observation 2\ref{obse}, $(\sigma_{r_i}, \tau_{r_i}) \in \mathcal{W}'$ is the unique pair such that $\GS(\sigma_{r_i}, \GS(\tau_{r_i})) \in \mathcal{W}_{\overline{A \cap B}}$ for each $i \in [m]$. So we use \autoref{prop1} to deduce that $\tau_0 P \sigma_{r_1}= P_{(\tau_0, \sigma_{r_1})}$ and $\tau_{r_i} P \sigma_{r_{i+1}}=P_{(\tau_{r_i}, \sigma_{r_{i+1}})}$ for $1 \leq i \leq (m-1)$. Therefore it follows from the construction of $\eta$ that $\eta(P)=P$.\\

 \textbf{Case II:} $P$ is a trajectory of the fourth or fifth type.

 Without loss of generality let,
$$
\begin{aligned}
	& P: (\tau=)~\tau_0^{(q)}, \sigma_1^{(q-1)},\tau_1^{(q)} \dots  ,\sigma_{p}^{(q-1)}, \tau_{p}^{(q)} ,(\sigma_{p+1})_{\bar{A}}^{(q-1)}, (\tau_{p+1})_{\bar{A}}^{(q)}, \dots ,(\sigma_{p+l-1})_{\bar{A}}^{(q-1)},\\ & (\tau_{p+l-1})_{\bar{A}}^{(q)}, (\sigma_{p+l})_{\bar{A}}^{(q-1)}~(= \sigma),
\end{aligned}
$$
 where $\tau \in C^{int}_q$, $\sigma \in C^{\bar{A}}_{q-1}$.
 
Let $Q= \tau P (\sigma_{p+1})_{\bar{A}}$, $R = (\sigma_{p+1})_{\bar{A}} P \sigma$. Suppose, the ground simplex changes in $Q$ at $\sigma_{r_i}, \tau_{r_i}$, $i \in [m]$, i.e.,
\[
\begin{aligned}
	& \GS(\tau_0) = \GS(\sigma_1)= \GS(\tau_1) = \dots \GS(\tau_{r_1-1}) \ne \GS(\sigma_{r_1}) \ne \GS(\tau_{r_1}) = \dots =\GS(\tau_{r_2-1}) \\&\ne \GS(\sigma_{r_2}) \ne \GS(\tau_{r_2})= \dots = \GS(\tau_{r_{m}-1}) \ne \GS(\sigma_{r_{m}}) \ne \GS(\tau_{r_{m}}) = \dots = \GS((\sigma_{p+1})_{\bar{A}}).
\end{aligned}
\] 
Therefore, $\mu(P): \GS(\tau_0), \GS(\sigma_{r_1}), \GS(\tau_{r_1}), \dots, \GS(\sigma_{r_{m}}), \GS(\tau_{r_{m}}), R$. 

From Observation 2\ref{obse}, and Proposition~\ref{prop1}, we deduce that  $\tau_{r_i} P \sigma_{r_{i+1}}=P_{(\tau_{r_i}, \sigma_{r_{i+1}})}$ for $0 \leq i \leq (m-1)$. From Proposition~\ref{prop2}, we infer that $\tau_{r_m} P (\sigma_{p+1})_{\bar{A}}=P_{(\tau_{r_m}, (\sigma_{p+1})_{\bar{A}})}$. Let $R'= (\tau_{p+1})_{\bar{A}} P \sigma$. Then $\eta(\mu(P)): P_{(\tau_0, \sigma_{r_1})}, P_{(\tau_{r_1}, \sigma_{r_2})},\dots P_{(\tau_{r_{m-1}}, \sigma_{r_m})}, R'$, which we observe is same as $P$.\\

Now, let $P \in \MV(\beta, \alpha)$, where $\beta \in D_q(X)$, $\alpha \in D_{q-1}(X)$. From the construction of the maps, we observe that it suffices to prove that $\mu \circ \eta(P)=P$ for the following two cases.\\

 \textbf{Case I:} Let $\beta \in \Crit_{q-1}^{\mathcal{W}_{\overline{A \cap B}}}(\overline{A \cap B})$ and $\alpha \in \Crit_{q-2}^{\mathcal{W}_{\overline{A \cap B}}}(\overline{A \cap B})$.

 Let,
$$ P: (\beta=)~(\beta_0)_{\overline{A \cap B}}^{(q-1)}, (\alpha_1)_{\overline{A \cap B}}^{(q-2)}, (\beta_1)_{\overline{A \cap B}}^{(q-1)}, \dots , (\alpha_k)_{\overline{A \cap B}}^{(q-2)}, (\beta_k)_{\overline{A \cap B}}^{(q-1)}, (\alpha_{k+1})_{\overline{A \cap B}}^{(q-2)}~(=\alpha).$$

According to the construction of $\eta$, $\eta(P)= P_{(\tau_0, \sigma_1)}, P_{(\tau_1, \sigma_2)}, \dots , P_{(\tau_k, \sigma_{k+1})}$. Here $\tau_0$, $\sigma_{k+1}$ are the unique $\mathcal{W}$-critical simplices in $S_{\beta}'$ and $S_{\alpha}'$ respectively, $(\sigma_i, \tau_i) \in \mathcal{W}$ are the unique pair such that $\GS(\sigma_i)=(\alpha_i)_{\overline{A \cap B}}$, $\GS(\tau_i) = (\beta_i)_{\overline{A \cap B}}$ for all $i \in [k]$, and $P_{(\tau_i, \sigma_{i+1})}$, $0 \leq i \leq k$, are the trajectories obtained from \autoref{prop1} . Therefore, from the ground-simplex-preserving property of $P_{(\tau_i, \sigma_{i+1})}$ for each $i \in \{0, \dots ,k\}$, we note that the ground simplex in changes in $\eta(P)$ at precisely $\tau_0, \sigma_1, \tau_1, \dots , \sigma_k, \tau_k, \sigma_{k+1}$. Therefore $\mu(\eta(P)): \GS(\tau_0), \GS(\sigma_1), \GS(\tau_1), \dots, \GS(\sigma_k), \GS(\tau_k) , \GS(\sigma_{k+1})$, which is precisely $P$. Hence it follows that $\mu(\eta(P))$ is $P$.\\

 \textbf{Case II:} Let $\beta \in \Crit_{q-1}^{\mathcal{W}_{\overline{A \cap B}}}(\overline{A \cap B})$ and $\alpha \in \Crit_{q-1}^{\mathcal{W}_{\bar{A}}}(\bar{A})$, or $\beta \in \Crit_{q-1}^{\mathcal{W}_{\overline{A \cap B}}}(\overline{A \cap B})$ and $\alpha \in \Crit_{q-1}^{\mathcal{W}_{\bar{B}}}(\bar{B})$.

 Without loss of generality, let,
$$
\begin{aligned}
	 & P: (\beta=)~(\beta_0)_{\overline{A \cap B}}^{(q-1)}, (\alpha_1)_{\overline{A \cap B}}^{(q-2)}, (\beta_1)_{\overline{A \cap B}}^{(q-1)} \dots  (\alpha_p)_{\overline{A \cap B}}^{(q-2)}, (\beta_p)_{\overline{A \cap B}}^{(q-1)}, (\beta_p)_{\bar{A}}^{(q-1)}, (\gamma_p)_{\bar{A}}^{(q)}, \dots (\beta_{p+l-1})_{\bar{A}}^{(q-1)},\\ & (\gamma_{p+l-1})_{\bar{A}}^{(q)}, (\beta_{p+l})_{\bar{A}}^{(q-1)}~(=\alpha).
\end{aligned}
 $$

Let $Q= \beta P (\beta_p)_{\overline{A \cap B}}$, $R = (\gamma_p)_{\bar{A}} P \alpha$. Therefore $\eta(P)= P_{(\tau_0, \sigma_1)}, P_{(\tau_1, \sigma_2)}, \dots , P_{(\tau_p, \sigma_{p+1})}, R$, where $\tau$ is the unique $\mathcal{W}$-critical simplex in $S_{\beta}'$, and $(\sigma_i, \tau_i) \in \mathcal{W}$ are the unique pair such that $\GS(\sigma_i)=(\alpha_i)_{\overline{A \cap B}}$, $\GS(\tau_i) = (\beta_i)_{\overline{A \cap B}}$ for $i \in [p]$, and $\sigma_{p+1}=(\beta_p)_{\bar{A}}$. We recall that the ground simplex remains unchanged for $P_{(\tau_p, \sigma_{p+1})}$. So, again from the ground-simplex-preserving property of $P_{(\tau_i, \sigma_{i+1})}$ for each $i \in \{0, \dots ,p\}$, we note that the ground simplex changes at precisely $\tau_0, \sigma_1, \tau_1, \dots, \sigma_p, \tau_p$. Let $R': (\beta_p)_{\bar{A}}, R$. Therefore $\mu(\eta(P)): \GS(\tau_0), \GS(\sigma_1), \GS(\tau_1), \dots, \GS(\sigma_p), \GS(\tau_p) , R'$, which, we observe is precisely $P$. \\
This proves Lemma~\ref{bij}.

\end{proof}

\begin{lem}\label{weight}
  Let $P \in \MV(\beta, \alpha)$, where $\beta \in D_q(X), \alpha \in D_{q-1}(X)$, $q \geq 1$. Then the weight of $P$ is same as the weight of $\eta(P)$. In other words, $w_M(P)= w(\eta(P))$ for each $P \in \MV(\beta, \alpha)$. 
\end{lem}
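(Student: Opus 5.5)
We argue by the three cases used in the construction of $\eta$ in Lemma~\ref{bij}. Since $w(\eta(P))$ is a product of local factors along the trajectory, we split $\eta(P)$ into the blocks $P_{(\tau_i,\sigma_{i+1})}$ (and possibly the tail $R$). We then show that each block contributes exactly the corresponding factor of $w_M(P)$.

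\emph{Case I} ($\beta,\alpha$ both in $\bar{A}$ or both in $\bar{B}$) is immediate. Here $\eta(P)=P$, and $w_M(P)$ and $w(P)$ are given by the same formula; note that the incidence numbers of copies agree with those in $X$.

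\emph{Case II} ($\beta,\alpha$ critical in $\overline{A\cap B}$). Write $\eta(P)=P_{(\tau_0,\sigma_1)},\dots,P_{(\tau_k,\sigma_{k+1})}$. The weight of a concatenation is the product, over the successive pairs, of the factors $-\langle\tau,\sigma\rangle\langle\tau',\sigma\rangle$, times the final incidence. We regroup $w(\eta(P))$ as a product of three kinds of factors:
\begin{itemize}
\item the internal factor of each block $P_{(\tau_i,\sigma_{i+1})}$, i.e.\ its weight up to but excluding the terminal incidence $\langle\cdot,\sigma_{i+1}\rangle$;
\item the terminal incidence of each block;
\item the pairing incidence $\langle\tau_{i+1},\sigma_{i+1}\rangle$, together with the sign $-1$ at each junction.
\end{itemize}
Observation~3 lists every block explicitly: it has length one ($\tau_1,\sigma_2$) or two ($\tau_1,\widetilde{\sigma_1},\widetilde{\tau_1},\sigma_2$), and $\tau_i,\sigma_{i+1}$ have one of the explicit forms $[\bar a_{i_0},\bar b_{i_0},\dots]$, $[\bar a_{i_0},\bar a_{i_1},\bar b_{i_1},\dots]$, $[\bar a_{i_1},\bar b_{i_1},\dots]$. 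Using this list, we compute $\langle\tau_i,\widetilde{\sigma}\rangle$ etc.\ directly. The key identity to establish is that for a ground-simplex-changing step, the contribution equals the incidence in $\overline{A\cap B}$ up to a universal sign. Concretely, if $\GS(\sigma_{i+1})$ is obtained from $\GS(\tau_i)=[\bar c_{i_0},\dots,\bar c_{i_{q-1}}]$ by deleting $\bar c_{i_m}$, then the block $P_{(\tau_i,\sigma_{i+1})}$ has weight $\pm(-1)^m$, and $\langle\tau_{i+1},\sigma_{i+1}\rangle=\pm(-1)^{j}$ where $\bar c_{i_j}$ is deleted in the pair $(\GS(\sigma_{i+1}),\GS(\tau_{i+1}))$. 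The prepended vertex $\bar a_{i_0}$ shifts every position by one. We track this shift in each subcase (a)/(b) of Observation~3 and in both Cases I and II of the definition of $\mathcal{W}'$ ($j=0$ versus $j>0$). We expect the shift to produce the sign $-\langle\GS(\tau),\GS(\sigma)\rangle$ at every step, and these signs pair off at each junction. Hence the product becomes $\prod(-\langle\beta_i,\alpha_{i+1}\rangle\langle\beta_{i+1},\alpha_{i+1}\rangle)$ times a single leftover sign $-\langle\beta_k,\alpha_{k+1}\rangle$. This matches the extra global minus in the definition of $w_M$ in case 3.

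\emph{Case III} ($\alpha$ in $\bar{A}$ or $\bar{B}$). We treat the $\overline{A\cap B}$ part exactly as in Case II. The last block $P_{(\tau_p,\sigma_{p+1})}$ comes from Observation~4. For $\bar A$ it is the long staircase of Observation~4(i); its weight is computed as in the Type I computation in the proof of Theorem~\ref{iso}, where each rung contributes $-(-1)^{j-1}(-1)^{j}=1$, so only a boundary sign survives. For $\bar B$ it has length one or two (Observation~4(ii)). This surviving sign should account for the difference between the leading $-$ in case 4 and its absence in case 5 of the definition of $w_M$. The tail $R$ is a $\mathcal{W}_{\bar A}$-trajectory (respectively $\mathcal{W}_{\bar B}$-trajectory), and its factors coincide verbatim with the second product in $w_M$. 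The junction factor $-\langle\tau_{p+1},\sigma_{p+1}\rangle\langle\cdot\rangle$ is absorbed because $(\sigma_{p+1})_{\bar A}=(\beta_p)_{\bar A}$.

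The main obstacle is the sign bookkeeping in Case II: the relative position of deleted vertices in $[\bar a_{i_0},\bar b_{i_0},\dots,\bar b_{i_{q-1}}]$ versus in $\GS$. This has to be verified uniformly across the $m=0$ and $m>0$ cases and the two shapes of $\tau_i$. We would first tabulate the local weight of each of the finitely many block shapes listed in Observations~3 and~4, and then check that the telescoping works.
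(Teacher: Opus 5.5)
Your overall strategy is the paper's: split $\eta(P)$ into the blocks $P_{(\tau_i,\sigma_{i+1})}$ plus the tail $R$, read each block off Observations~3 and~4, and match factors with $w_M(P)$. Your Case~I is also correct. However, the sign mechanism you predict for Case~II is wrong, and your ``key identity'' with a universal sign fails as stated.

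\emph{Where the prediction fails.} The relative sign depends on the shape of $\tau_i$, i.e.\ on whether $(\sigma_i,\tau_i)\in\mathcal{W}'$ came from the $j=0$ or the $j>0$ case of the construction.
\begin{itemize}
\item For $\tau_i=[\bar a_{j_0},\bar b_{j_0},\dots,\bar b_{j_{q-1}}]$ you get $\langle\tau_i,\sigma_i\rangle=-\langle\beta_i,\alpha_i\rangle$ and $w(P_{(\tau_i,\sigma_{i+1})})=-\langle\beta_i,\alpha_{i+1}\rangle$.
\item For $\tau_i=[\bar a_{j_0},\bar a_{j_1},\bar b_{j_1},\dots,\bar b_{j_{q-1}}]$ you get $\langle\tau_i,\sigma_i\rangle=+1=\langle\beta_i,\alpha_i\rangle$. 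The block of Observation~3(ii) has weight $(-1)(+1)(-1)^{m+1}=+\langle\beta_i,\alpha_{i+1}\rangle$.
\end{itemize}
So it is false that every step contributes $-\langle\GS(\tau),\GS(\sigma)\rangle$. Your grouping also breaks down: if you combine each block with the next pairing incidence $\langle\tau_{i+1},\sigma_{i+1}\rangle$ (your junction at $\sigma_{i+1}$), the factors change sign whenever consecutive $\tau$'s have different shapes. Likewise, the last block carries $+\langle\beta_k,\alpha_{k+1}\rangle$ when $\tau_k$ has the second shape, not $-\langle\beta_k,\alpha_{k+1}\rangle$.

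\emph{What works, and what the paper does.} Group around each $\tau_i$ instead. Then $-\langle\tau_i,\sigma_i\rangle\,w(P_{(\tau_i,\sigma_{i+1})})=-\langle\beta_i,\alpha_i\rangle\langle\beta_i,\alpha_{i+1}\rangle$ for both shapes, because the two deviations flip together. The single extra minus comes from the \emph{first} block: $\tau_0=g_q(\beta)$ always has the shape $[\bar a_{i_0},\bar b_{i_0},\dots]$, so $w(P_{(\tau_0,\sigma_1)})=-\langle\beta_0,\alpha_1\rangle$.

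The same caveat applies to your Case~III. The terminal block's own sign depends on the shape of $\tau_p$: it is $-1$ or $+1$ for the $\bar A$-staircase, and $+1$ or $-1$ for the $\bar B$-path. Only the product $-\langle\tau_p,\sigma_p\rangle\,w(P_{(\tau_p,\sigma_{p+1})})$ is uniform, equal to $-\langle\beta_p,\alpha_p\rangle$ on the $\bar A$ side and $+\langle\beta_p,\alpha_p\rangle$ on the $\bar B$ side. For $p=0$, $\tau_0$ has the first shape and the block alone gives $-1$, respectively $+1$. This uniform sign is what produces the leading minus in case~4 of $w_M$ and its absence in case~5. With this regrouping your tabulation goes through and coincides with the paper's proof.
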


\begin{proof}

 Let $P \in \MV(\beta, \alpha)$, where $\beta \in D_q(X)$, $\alpha \in D_{q-1}(X)$. From the construction of $\eta$,  it is clear that if $\beta \in \Crit_q^{\mathcal{W}_{\bar{A}}}(\bar{A})$, $\alpha \in \Crit_{q-1}^{\mathcal{W}_{\bar{A}}}(\bar{A})$ or $\beta \in \Crit_{q}^{\mathcal{W}_{\bar{B}}}(\bar{B})$, $\alpha \in \Crit_{q-1}^{\mathcal{W}_{\bar{B}}}(\bar{B}) $, then the result holds trivially. So it suffices to prove the result for the following three cases.\\

 \textbf{Case I:} Let, $\beta \in \Crit_{q-1}^{\mathcal{W}_{\overline{A \cap B}}}(\overline{A \cap B})$ and $\alpha \in \Crit_{q-2}^{\mathcal{W}_{\overline{A \cap B}}}(\overline{A \cap B}) $.

 Let $P: (\beta=)~(\beta_0)_{\overline{A \cap B}}^{(q-1)}, (\alpha_1)_{\overline{A \cap B}}^{(q-2)}, (\beta_1)_{\overline{A \cap B}}^{(q-1)}, \dots , (\alpha_k)_{\overline{A \cap B}}^{(q-2)}, (\beta_k)_{\overline{A \cap B}}^{(q-1)}, (\alpha_{k+1})_{\overline{A \cap B}}^{(q-2)}~(=\alpha).$

 We recall from Section~\ref{intro} that the weight of such a trajectory is given by,

		$$w_M(P)= - \left(\prod_{i=0}^{k-1}- \langle \beta_i, \alpha_{i+1} \rangle \langle \beta_{i+1}, \alpha_{i+1} \rangle\right)\langle \beta_k,\alpha_{k+1}\rangle$$\\
		$$ =- \langle \beta_0, \alpha_1 \rangle \prod_{i=1}^{k}(- \langle \beta_i, \alpha_i \rangle \langle \beta_i, \alpha_{i+1} \rangle).$$
		
 We denote $\langle \beta_0, \alpha_1 \rangle$ as $w_0$ and for each $i \in [k]$, we denote $- \langle \beta_i, \alpha_i \rangle \langle \beta_i, \alpha_{i+1} \rangle$ as $w_i$. Thus $w_M(P)= -\prod_{i=0}^{k}w_i$.

Let $g_{q}(\beta) = \tau_0$ and $g_{q-1}(\alpha)= \sigma_{k+1}$. Let $(\sigma_i, \tau_i) \in \mathcal{W'}$ be the unique pair such that $\GS(\sigma_i)=(\alpha_i)_{\overline{A \cap B}}$, $\GS(\tau_i)=(\beta_i)_{\overline{A \cap B}}$, for each $i \in [k]$. Then $\eta(P)$ is defined as 
$$P': P_{(\tau_0, \sigma_1)}, P_{(\tau_1, \sigma_2)}, \dots ,P_{(\tau_k, \sigma_{k+1})}.$$
where $P_{(\tau_{i-1}, \sigma_{i})}$, $i \in [k+1]$, carry the usual meaning as mentioned after \autoref{prop2}. We note here that the subsequences $\tau_0 P' \sigma_1$ of $P'$ and $\sigma_i P' \sigma_{i+1}$ are themselves $\mathcal{W}$-trajectories for each $i \in [k]$.

So, let $w(\tau_0 P' \sigma_1)=w_0'$ and for each $i \in [k]$, let $w(\sigma_i P' \sigma_{i+1})=w_i'$. Then $w(P')= \prod_{i=0}^{k}w_i'$.

Let $(\beta_0)_{\overline{A \cap B}}=[\bar{c}_{i_0}, \dots ,\bar{c}_{i_{q-1}}]$, $(\alpha_1)_{\overline{A \cap B}}= [\bar{c}_{i_0}, \dots , \widehat{\bar{c}_{i_m}}, \dots ,\bar{c}_{i_{q-1}}]$. Therefore $w_0= \langle (\beta_0)_{\overline{A \cap B}}, (\alpha_1)_{\overline{A \cap B}}\rangle= \langle \beta_0, \alpha_1\rangle= (-1)^m$.
Here, we note that $\langle \tau_{\overline{A \cap B}}, \sigma_{\overline{A \cap B}}\rangle= \langle \tau, \sigma \rangle$, $\langle \tau_{\bar{A}}, \sigma_{\bar{A}} \rangle= \langle \tau, \sigma \rangle$, $\langle \tau_{\bar{B}}, \sigma_{\bar{B}}\rangle= \langle \tau, \sigma \rangle$ for $\tau, \sigma \in X$. Henceforth, we use this observation throughout the proof without mentioning it again.

Now we determine $P_{(\tau_0, \sigma_1)}$ for the calculation of $w_0'$. Now, $\tau_0=g_q(\beta)=[\bar{a}_{i_0}, \bar{b}_{i_0}, \dots ,\bar{b}_{i_{q-1}}]$. So, we refer to Observation 3\ref{obs3a} to determine $P_{(\tau_0, \sigma_1)}$ .

If $m>0$, then $P_{(\tau_0, \sigma_1)}: \tau_0, \sigma_1$, where $\sigma_1=[\bar{a}_{i_0}, \bar{b}_{i_0}, \dots, \widehat{\bar{b}_{i_m}}, \dots ,\bar{b}_{i_{q-1}}]$. Then $w_0'= \langle \tau_0, \sigma_1\rangle= (-1)^{m+1} =-w_0$.

If $m=0$, then $P_{(\tau_0, \sigma_1)}: \tau_0, \widetilde{\sigma_1}, \widetilde{\tau_1}, \sigma_1$, where $\widetilde{\sigma_1}=[\bar{a}_{i_0}, \bar{b}_{i_1}, \dots , \bar{b}_{i_{q-1}}]$, $\widetilde{\tau_1}=[\bar{a}_{i_0}, \bar{a}_{i_1}, \bar{b}_{i_1}, \dots , \bar{b}_{i_{q-1}}]$, $\sigma=[\bar{a}_{i_1}, \bar{b}_{i_1}, \dots, \dots \bar{b}_{i_{q-1}}]$. Then, $w_0'= \langle \tau_0, \widetilde{\sigma_1} \rangle (- \langle \widetilde{\tau_1}, \widetilde{\sigma_1} \rangle\langle \widetilde{\tau_1}, \sigma_1 \rangle)= (-1)(-(-1)(-1)^0)=-1= -(-1)^{m}=-w_0$.

Now, let $(\beta_i)_{\overline{A \cap B}}=[\bar{c}_{j_0}, \dots , \bar{c}_{i_{q-1}}]$, $(\alpha_i)_{\overline{A \cap B}}=[\bar{c}_{j_0}, \dots \widehat{\bar{c}_{j_p}}, \dots, \bar{c}_{i_{q-1}}]$, $(\alpha_{i+1})_{\overline{A \cap B}}= [\bar{c}_{j_0}, \dots \widehat{\bar{c}_{j_m}}, \dots, \bar{c}_{i_{q-1}}]$, for each $i \in [k]$. Therefore $w_i= - \langle \beta_i, \alpha_i \rangle \langle \beta_i, \alpha_{i+1} \rangle=- (-1)^{p}(-1)^{m}$. 

 Now we compute $w_i'$ for each $i \in [k]$. We use Observation 3\ref{obs3a} and Observation 3\ref{obs3b} to determine $P_{(\tau_i, \sigma_{i+1})}$ for $i \in [k]$.

 Here, if $p> 0$, then from Observation 2\ref{obsa}, $\sigma_i= [\bar{a}_{j_0}, \bar{b}_{j_0}, \dots, \widehat{\bar{b}_{j_p}}, \dots ,\bar{b}_{j_{q-1}}]$, $\tau_i=[\bar{a}_{j_0}, \bar{b}_{j_0}, \dots ,\bar{b}_{j_{q-1}}]$.

Now, if $m=0$, then from Observation 3\ref{obs3a} $P_{(\tau_i, \sigma_{i+1})}: \tau_i, \widetilde{\sigma_i}, \widetilde{\tau_i}, \sigma_{i+1}$, where $\widetilde{\sigma_i}=[\bar{a}_{j_0}, \bar{b}_{j_1}, \dots , \bar{b}_{j_{q-1}}]$, $\widetilde{\tau_i}=[\bar{a}_{j_0}, \bar{a}_{j_1}, \bar{b}_{j_1}, \dots , \bar{b}_{j_{q-1}}]$, $\sigma_{i+1}=[\bar{a}_{j_1}, \bar{b}_{j_1}, \dots ,\bar{b}_{j_{q-1}}]$. Therefore $w_i'= w(\sigma_i P' \sigma_{i+1})=( - \langle \tau_i, \sigma_i\rangle \langle \tau_i, \widetilde{\sigma_i}\rangle)\\(-\langle \widetilde{\tau_i}, \widetilde{\sigma_i} \rangle \langle \widetilde{\tau_i}, \sigma_{i+1} \rangle)=(-(-1)^{p+1}(-1))(-(-1)(-1)^{0})=-(-1)^p(-1)^0=w_i$.

If $m >0$, then again from Observation 3\ref{obs3a}, $P_{(\tau_i, \sigma_{i+1})}: \tau_i, \sigma_{i+1}$, where $\sigma_{i+1}= [\bar{a}_{j_0}, \bar{b}_{j_0}, \dots, \widehat{\bar{b}_{j_m}}, \dots ,\bar{b}_{j_{q-1}}]$. Therefore $w_i'=w(\sigma_i P' \sigma_{i+1})= - \langle \tau_i, \sigma_i \rangle \langle \tau_i, \sigma_{i+1} \rangle =-(-1)^{p+1}(-1)^{m+1}=w_i$.

Next, if $p=0$, then from Observation 2\ref{obsa}, $\sigma_i= [\bar{a}_{j_1},\bar{b}_{j_1}, \dots ,\bar{b}_{j_{q-1}}]$, $\tau_i= [\bar{a}_{j_0},\bar{a}_{j_1}, \bar{b}_{j_1}, \dots , \bar{b}_{j_{q-1}}]$. Therefore, from Observation 3\ref{obs3b}, $m > 0$, and $P_{(\tau_i, \sigma_{i+1})}: \tau_i, \widetilde{\sigma_i}, \widetilde{\tau_i}, \sigma_{i+1}$, where, $\widetilde{\sigma_{i}}= [\bar{a}_{j_0}, \bar{b}_{j_1}, \dots ,\bar{b}_{j_{q-1}}]$, $\widetilde{\tau_{i}}=[\bar{a}_{j_0}, \bar{b}_{j_0}, \bar{b}_{j_1}, \dots ,\bar{b}_{j_{q-1}}]$, $\sigma_{i+1}=[\bar{a}_{j_0}, \bar{b}_{j_0}, \dots , \widehat{\bar{b}_{j_m}}, \dots ,\bar{b}_{j_{q-1}}]$.  So, $w_i'=( - \langle \tau_i, \sigma_i\rangle \langle \tau_i, \widetilde{\sigma_i}\rangle)(-\langle \widetilde{\tau_i}, \widetilde{\sigma_i} \rangle \langle \widetilde{\tau_i}, \sigma_{i+1} \rangle)= (-(-1)^{0}(-1))(-(-1)(-1)^{m+1})=-(-1)^0(-1)^m=w_i'$.

 Therefore, $w_M(P)= -\prod_{i=0}^k w_i = \prod_{i=0}^k w_i' = w(\eta(P))$.


 \textbf{Case II:} Let, $\beta\in \Crit_{q-1}^{\mathcal{W}_{\overline{A \cap B}}}(\overline{A \cap B})$ and $\alpha \in \Crit_{q-1}^{\mathcal{W}_{\bar{A}}}(\bar{A})$.\\

 Let $P \in \MV(\beta, \alpha)$ be as follows.
$$  
\begin{aligned}
	& (\beta=)~(\beta_0)_{\overline{A \cap B}}^{(q-1)}, (\alpha_1)_{\overline{A \cap B}}^{(q-2)}, (\beta_1)_{\overline{A \cap B}}^{(q-1)} \dots  (\alpha_p)_{\overline{A \cap B}}^{(q-2)}, (\beta_p)_{\overline{A \cap B}}^{(q-1)}, (\beta_p)_{\bar{A}}^{(q-1)}, (\gamma_p)_{\bar{A}}^{(q)}, \dots (\beta_{p+l-1})_{\bar{A}}^{(q-1)}, \\ &(\gamma_{p+l-1})_{\bar{A}}^{(q)}, (\beta_{p+l})_{\bar{A}}^{(q-1)}~(=\alpha).
\end{aligned}
$$

 We recall from Section~\ref{intro} that the weight of $P$ is given by, 
\begin{equation*}
	\begin{aligned}
		w_M(P) &= - \left(\prod_{i=0}^{p-1} -\langle \beta_i,\alpha_{i+1},  \rangle \langle \beta_{i+1}, \alpha_{i+1} \rangle \right) \left(\prod_{i=p}^{p+l-1}-\langle \gamma_i, \beta_i\rangle \langle \gamma_i, \beta_{i+1}\rangle \right)\\ &= - \langle \beta_0, \alpha_1 \rangle \left(\prod_{i=1}^{p-1} -\langle \beta_i,\alpha_i,  \rangle \langle \beta_{i}, \alpha_{i+1} \rangle \right) \langle \beta_p, \alpha_p \rangle \langle \gamma_p, \beta_p \rangle \left(\langle \gamma_p, \beta_{p+1} \rangle \prod_{i=p+1}^{p+l-1}-\langle \gamma_i, \beta_i\rangle \langle \gamma_i, \beta_{i+1}\rangle \right)\\ &= \left(- \prod_{i=0}^{p-1}w_i \right) \langle \beta_p, \alpha_p \rangle \langle \gamma_p, \beta_p \rangle w(R),
	\end{aligned}
\end{equation*}
where $w_0=\langle \beta_0, \alpha_1 \rangle$ and $w_i=-\langle \beta_i,\alpha_i,  \rangle \langle \beta_{i}, \alpha_{i+1} \rangle$, as introduced in Case I, and $R= (\gamma_p)_{\bar{A}} P \alpha$ is a $\mathcal{W}_{\bar{A}}$-trajectory.

Let $g_q(\beta)= \tau$ and let $(\sigma_i, \tau_i) \in \mathcal{W'}$ be the unique pair such that $\GS(\sigma_i)=(\alpha_i)_{\overline{A \cap B}}$, $\GS(\tau_i)= (\beta_i)_{\overline{A \cap B}}$. Let $\sigma_{p+1}= (\beta_p)_{\bar{A}}$, for each $i \in [p]$. Then $\eta(P)$ is given by, 
$$ P': P_{(\tau_0, \sigma_1)}, P_{(\tau_1, \sigma_2)}, \dots ,P_{(\tau_{p-1}, \sigma_p)}, P_{(\tau_p, \sigma_{p+1})}, R ,$$ where, for each $i \in [p]$, $P_{(\tau_i, \sigma_{i+1})}$ have their usual meanings as discussed after \autoref{prop2}. We note here that $\tau_0 P' \sigma_p$ and $\tau_p P' (\gamma_p)_{\bar{A}} = P_{(\tau_p, \sigma_{p+1})} $ are $\mathcal{W}$-trajectories. So, $w(P')= w(\tau_0 P' \sigma_p)(- \langle \tau_p, \sigma_p \rangle) \\ w(\tau_p P' (\gamma_p)_{\bar{A}})w(R)$.

Now, we use analogous arguments and computations as in Case I, to show that $w(\tau_0 P' \sigma_p) = - \prod_{i=0}^{p-1}w_i$. Since the weight of $R$ remains unchanged, it suffices to show that $- \langle \tau_p, \sigma_p \rangle w(\tau_p P' (\gamma_p)_{\bar{A}}) = \langle \beta_p, \alpha_p \rangle \langle \gamma_p, \beta_p \rangle$, . 

Let $(\beta_p)_{\overline{A \cap B}}=[\bar{c}_{i_0}, \dots , \bar{c}_{i_{q-1}}]$, $(\alpha_p)_{\overline{A \cap B}}=[\bar{c}_{i_0}, \dots , \widehat{\bar{c}_{i_r}}, \dots , \bar{c}_{i_{q-1}}]$, and \\
 $(\gamma_p)_{\bar{A}}= (\beta_p)_{\bar{A}} \cup \{\bar{a}_{i_m}\}= [\bar{a}_{i_0}, \dots,\bar{a}_{i_m}, \dots , \bar{a}_{i_{q-1}}]$, $i_0 < i_1 < \dots < i_m < \dots < i_{q-1}$. So, $\langle \beta_p, \alpha_p \rangle \langle \gamma_p, \beta_p \rangle= (-1)^r(-1)^m$.
We use Observation 4\ref{obs4a} and 4\ref{obs4b} to determine $P_{(\tau_p, \sigma_{p+1})}$. 

If $r > 0$, then from Observation 2\ref{obsa}, $\tau_p= [\bar{a}_{i_0}, \bar{b}_{i_0}, \dots , \bar{b}_{i_{q-1}}]$, $\sigma_p= [\bar{a}_{i_0}, \bar{b}_{i_0}, \dots ,\widehat{\bar{b}_{i_r}},\dots  ,\bar{b}_{i_{q-1}}]$. So, from Observation 4\ref{obs4a}, $P_{(\tau_p, \sigma_{p+1})}: \tau_p, \widetilde{\sigma_p}, \widetilde{\tau_{p}}, \dots , \widetilde{\sigma_{p+q-2}}, \widetilde{\tau_{p+q-2}}, \sigma_{p+1}$, where, $\widetilde{\sigma_p}=[\bar{a}_{i_0}, \bar{b}_{i_1}, \dots , \bar{b}_{i_{q-1}}]$, $\widetilde{\tau_p}=[\bar{a}_{i_0}, \bar{a}_{i_1}, \bar{b}_{i_1}, \dots , \bar{b}_{i_{q-1}}], \dots , \widetilde{\sigma_{p+q-2}}=[\bar{a}_{i_0}, \bar{a}_{i_1},  \dots , \bar{a}_{i_{q-2}}, \bar{b}_{i_{q-1}}]$, $\widetilde{\tau_{p+q-2}}= [\bar{a}_{i_0}, \bar{a}_{i_1},  \dots , \bar{a}_{i_{q-2}},\bar{a}_{i_{q-1}}, \bar{b}_{i_{q-1}}]$, $\sigma_{p+1}=[\bar{a}_{i_0}, \bar{a}_{i_1},  \dots , \bar{a}_{i_{q-2}},\bar{a}_{i_{q-1}}]$. Therefore, $\tau_p P' (\gamma_p)_{\bar{A}}= P_{(\tau_p, \sigma_{p+1})}, (\gamma_p)_{\bar{A}}$. Thus,

\begin{equation*}
	\begin{aligned}
		-\langle \tau_p, \sigma_p \rangle w(\tau_p P' (\gamma_p)_{\bar{A}})& = - (-1)^{r+1}\left(\prod_{i=1}^{q-1}-(-1)^i(-1^i)\right)(-1)^q(-(-1)^{m}) \\ &=-(-1)^{r+1} (-1) (-(-1)^{m})\\ & =\langle \beta_p, \alpha_p \rangle \langle \gamma_p, \beta_p \rangle.
	\end{aligned}
\end{equation*}

If $r=0$, then again, from Observation 2\ref{obsa}, $\sigma_p= [\bar{a}_{i_1}, \bar{b}_{i_1}, \dots ,\widehat{\bar{b}_{i_r}},\dots  ,\bar{b}_{i_{q-1}}]$,$\tau_p= [\bar{a}_{i_0}, \bar{a}_{i_1}, \bar{b}_{i_1}, \dots , \bar{b}_{i_{q-1}}]$. Therefore,from Observation 4\ref{obs4a}, $P_{(\tau_p, \sigma_{p+1})}: \tau_p, \widetilde{\sigma_p} \widetilde{\tau_{p}}, \dots , \widetilde{\sigma_{p+q-3}}, \widetilde{\tau_{p+q-3}}, \sigma_{p+1}$, where, $\widetilde{\sigma_p}=[\bar{a}_{i_0}, \bar{a}_{i_1}, \bar{b}_{i_2} \dots , \bar{b}_{i_{q-1}}]$, $\widetilde{\tau_p}=[\bar{a}_{i_0}, \bar{a}_{i_1}, \bar{a}_{i_2}, \bar{b}_{i_2}, \dots , \bar{b}_{i_{q-1}}], \dots , \widetilde{\sigma_{p+q-3}}=[\bar{a}_{i_0}, \bar{a}_{i_1},  \dots , \bar{a}_{i_{q-2}}, \bar{b}_{i_{q-1}}]$, $\widetilde{\tau_{p+q-3}}= [\bar{a}_{i_0}, \bar{a}_{i_1},  \dots , \bar{a}_{i_{q-2}},\bar{a}_{i_{q-1}} \bar{b}_{i_{q-1}}]$, $\sigma_{p+1}=[\bar{a}_{i_0}, \bar{a}_{i_1},  \dots , \bar{a}_{i_{q-2}},\bar{a}_{i_{q-1}}]$.
Now, $\tau_p P' (\gamma_p)_{\bar{A}}= P_{(\tau_p, \sigma_{p+1})}, (\gamma_p)_{\bar{A}}$. 
 Therefore, 

\begin{equation*}
	\begin{aligned}
		-\langle \tau_p, \sigma_p \rangle w(\tau_p P' (\gamma_p)_{\bar{A}})& = - (-1)^{0}\left(\prod_{i=2}^{q-1}(-1)^i(-1^i)\right)(-1)^q(-(-1)^{m}) \\ &=-(-1)^{0} (1) (-(-1)^{m})\\ & =\langle \beta_p, \alpha_p \rangle \langle \gamma_p, \beta_p \rangle.	\end{aligned}
\end{equation*}

 Thus, in this case also, we have proved that $w_M(P)=w(\eta(P))$.\\

 \textbf{Case III:} Let $\beta \in \Crit_{q-1}^{\mathcal{W}_{\overline{A \cap B}}}(\overline{A \cap B})$ and $\alpha \in \Crit_{q-1}^{\mathcal{W}_{\bar{B}}}(\bar{B})$.

 Let $P \in \MV(\beta, \alpha)$, where,
$$  
\begin{aligned}
	& (\beta=)~(\beta_0)_{\overline{A \cap B}}^{(q-1)}, (\alpha_1)_{\overline{A \cap B}}^{(q-2)}, (\beta_1)_{\overline{A \cap B}}^{(q-1)} \dots  (\alpha_p)_{\overline{A \cap B}}^{(q-2)}, (\beta_p)_{\overline{A \cap B}}^{(q-1)}, (\beta_p)_{\bar{B}}^{(q-1)}, (\gamma_p)_{\bar{B}}^{(q)}, \dots (\beta_{p+l-1})_{\bar{B}}^{(q-1)}, \\ &(\gamma_{p+l-1})_{\bar{B}}^{(q)}, (\beta_{p+l})_{\bar{B}}^{(q-1)}~(=\alpha).
\end{aligned}
$$

Now, we recall from Section \ref{intro} that,
\begin{equation*}
	\begin{aligned}
		w_M(P) &= \left(\prod_{i=0}^{p-1} -\langle \beta_i,\alpha_{i+1},  \rangle \langle \beta_{i+1}, \alpha_{i+1} \rangle \right) \left(\prod_{i=p}^{p+l-1}-\langle \gamma_i, \beta_i\rangle \langle \gamma_i, \beta_{i+1}\rangle \right)\\ &= \left( \prod_{i=0}^{p-1}w_i \right) \langle \beta_p, \alpha_p \rangle \langle \gamma_p, \beta_p \rangle w(R), \text{ (by similar computations as in Case I)}
	\end{aligned}
\end{equation*}
 where $w_i$ carry usual meanings as in the previous cases and $R= (\gamma_p)_{\bar{B}} P \alpha$ is a $\mathcal{W}_{\bar{B}}$-trajectory. 

As before, let $g_q(\beta)= \tau_0$ and let $(\sigma_i, \tau_i) \in \mathcal{W'}$ be the unique pair such that $\GS(\sigma_i)=(\alpha_i)_{\overline{A \cap B}}$, $\GS(\tau_i)= (\beta_i)_{\overline{A \cap B}}$. Let $\sigma_{p+1}= (\beta_p)_{\bar{B}}$, for each $i \in [p]$. Then $\eta(P)$ is given by, 
$$ P': P_{(\tau_0, \sigma_1)}, P_{(\tau_1, \sigma_2)}, \dots ,P_{(\tau_{p-1}, \sigma_p)}, P_{(\tau_p, \sigma_{p+1})}, R ,$$ where $R= (\gamma_p)_{\bar{B}} P \alpha$ and for each $i \in [p]$, $P_{(\tau_i, \sigma_{i+1})}$ have their usual meanings as discussed in the construction of $\eta$. We note here that $\tau_0 P' \sigma_p$ and $\tau_p P' (\gamma_p)_{\bar{B}}$ are $\mathcal{W}$-trajectories. Therefore $w(P')= w(\tau_0 P' \sigma_p)(- \langle \tau_p, \sigma_p \rangle) w(\tau_p P' (\gamma_p)_{\bar{B}})w(R)$.

Using similar arguments and computations as in Case I, we can show that $w(\tau_0 P' \sigma_p)= - \prod_{i=0}^{p-1}w_i$. So, it suffices to prove that  $- \langle \tau_p, \sigma_p \rangle w(\tau_p P' (\gamma_p)_{\bar{B}}) = - \langle \beta_p, \alpha_p \rangle \langle \gamma_p, \beta_p \rangle$.

Let $(\beta_p)_{\overline{A \cap B}}=[\bar{c}_{i_0}, \dots , \bar{c}_{i_{q-1}}]$, $(\alpha_p)_{\overline{A \cap B}}=[\bar{c}_{i_0}, \dots , \widehat{\bar{c}_{i_r}},\dots, \bar{c}_{i_{q-1}}]$, $(\gamma_p)_{\bar{B}}= (\beta_p)_{\bar{B}} \cup \{\bar{b}_{i_m}\} \\= [\bar{b}_{i_0}, \dots,\bar{b}_{i_m}, \dots , \bar{b}_{i_{q-1}}]$, $i_0 < i_1 < \dots < i_m < \dots < i_{q-1}$. So, $\langle \beta_p, \alpha_p \rangle \langle \gamma_p, \beta_p \rangle= (-1)^r(-1)^m$.

Now, we use Observation 4\ref{obs4b} to determine $P_{(\tau_p, \sigma_{p+1})}$. 

If $r > 0$, then  from Observation 2\ref{obsa}, $\tau_p= [\bar{a}_{i_0}, \bar{b}_{i_0}, \dots , \bar{b}_{i_{q-1}}]$, $\sigma_p= [\bar{a}_{i_0}, \bar{b}_{i_0}, \dots ,\widehat{\bar{b}_{i_r}},\dots  ,\bar{b}_{i_{q-1}}]$. So, from Observation 4\ref{obs4b}, $P_{(\tau_p, \sigma_{p+1})}: \tau_p, \sigma_{p+1}$.
So,  
\begin{equation*}
	\begin{aligned}
		- \langle \tau_p, \sigma_p \rangle w(\tau_p P' (\gamma_p)_{\bar{B}}) &=- \langle \tau_p, \sigma_p \rangle (-\langle \tau_p, \sigma_{p+1} \rangle \langle (\gamma_p)_{\bar{B}}, \sigma_{p+1}\rangle) \\ &=-(-1)^{r+1}(-(-1)^0(-1)^m)\\ &=  -\langle \beta_p, \alpha_p \rangle \langle \gamma_p, \beta_p \rangle.
	\end{aligned}
\end{equation*}

If $r=0$, then from Observation 2\ref{obsa}, $\tau_p= [\bar{a}_{i_0}, \bar{a}_{i_1},\bar{b}_{i_1}, \dots , \bar{b}_{i_{q-1}}]$, $\sigma_p= [\bar{a}_{i_1}, \bar{b}_{i_1}, \dots ,\widehat{\bar{b}_{i_r}},\dots  ,\bar{b}_{i_{q-1}}]$. From Observation 4\ref{obs4b}, $P_{(\tau_p, \sigma_{p+1})}: \tau_p, \widetilde{\sigma_p}, \widetilde{\tau_p}, \sigma_{p+1}$, where $\widetilde{\sigma_p}=[\bar{a}_{i_0},\bar{b}_{i_1}, \dots , \bar{b}_{i_{q-1}}]$, $\widetilde{\tau_p}=[\bar{a}_{i_0}, \bar{b}_{i_0},\bar{b}_{i_1}, \dots , \bar{b}_{i_{q-1}}]$, $\sigma_{p+1}=[\bar{b}_{i_0}, \dots ,\bar{b}_{i_{q-1}}]$. So,

\begin{equation*}
	\begin{aligned}
		- \langle \tau_p, \sigma_p \rangle w(\tau_p P' (\gamma_p)_{\bar{B}}) &=- \langle \tau_p, \sigma_p \rangle (-\langle \tau_p, \widetilde{\sigma_{p}} \rangle \langle \widetilde{\tau_p}, \widetilde{\sigma_p} \rangle)(-\langle \widetilde{\tau_p}, \sigma_{p+1} \rangle \langle (\gamma_p)_{\bar{B}}, \sigma_{p+1}\rangle) \\ &=-(-1)^0(-(-1)^1(-1)^1)(-(-1)^{0}(-1)^m)\\ &=  -\langle \beta_p, \alpha_p \rangle \langle \gamma_p, \beta_p \rangle.
	\end{aligned}
\end{equation*}

 Hence we have proved that $w_M(P)=w(\eta(P))$ for each $P \in \MV(\beta, \alpha)$.

\end{proof}

 Now we are ready to prove the next result.

\begin{thm}\label{new}
	The chain complex $\mathcal{D}_{\#}(X)$ is isomorphic to the Thom-Smale Complex of $\widetilde{X}$, $C_{\#}^{\mathcal{W}}(\widetilde{X}, \mathbb{Z})$, i.e.,
	
	$$ \mathcal{D}_{\#}(X) \cong C_{\#}^{\mathcal{W}}(\widetilde{X}, \mathbb{Z}).$$
	 Hence, $H_{\#}^{\mathcal{W}}(\widetilde{X}, \mathbb{Z}) \cong H_{\#}^{\mathcal{D}}(X)$.
\end{thm}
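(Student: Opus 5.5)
The plan is to show that the maps $f_{\#}$ and $g_{\#}$ already constructed form an isomorphism of chain complexes. We verified above that $f_q\circ g_q=\operatorname{id}$ and $g_q\circ f_q=\operatorname{id}$ on generators, so each $g_q:\mathcal{D}_q(X)\to C_q^{\mathcal{W}}(\widetilde{X},\mathbb{Z})$ is a group isomorphism. It therefore remains to check that $g_{\#}$ commutes with the boundary operators, that is,
$$\partial_{q}^{\mathcal{W}}\circ g_{q}=g_{q-1}\circ\partial_{q}^{\mathcal{D}}\quad\text{for each } q\geq 1 .$$
Once this holds, $f_{\#}=g_{\#}^{-1}$ is automatically a chain map as well.

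Since both sides are linear, it suffices to test the identity on a generator $\beta\in D_q(X)$. Writing out the definitions, we have
$$g_{q-1}(\partial_q^{\mathcal{D}}\beta)=\sum_{\alpha\in D_{q-1}(X)}\Big(\sum_{P\in\MV(\beta,\alpha)}w_M(P)\Big)g_{q-1}(\alpha),$$
$$\partial_q^{\mathcal{W}}(g_q\beta)=\sum_{\sigma\in\Crit^{\mathcal{W}}_{q-1}(\widetilde{X})}\Big(\sum_{Q\in\Gamma(g_q\beta,\sigma)}w(Q)\Big)\sigma .$$
Because $g_{q-1}$ is a bijection from $D_{q-1}(X)$ onto $\Crit^{\mathcal{W}}_{q-1}(\widetilde{X})$, I will reindex the second sum by $\sigma=g_{q-1}(\alpha)$. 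The identity then reduces to a coefficientwise statement: for every $\beta\in D_q(X)$ and $\alpha\in D_{q-1}(X)$,
$$\sum_{P\in\MV(\beta,\alpha)}w_M(P)=\sum_{Q\in\Gamma(g_q\beta,g_{q-1}\alpha)}w(Q).$$

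To prove this I will invoke Lemma~\ref{bij} with $\tau=g_q(\beta)$ and $\sigma=g_{q-1}(\alpha)$. Since $f_q(\tau)=\beta$ and $f_{q-1}(\sigma)=\alpha$, the lemma says that $\eta:\MV(\beta,\alpha)\to\Gamma(g_q\beta,g_{q-1}\alpha)$ is a bijection with inverse $\mu$. Lemma~\ref{weight} then gives $w(\eta(P))=w_M(P)$ for every $P$. Summing over $P$ and reindexing through the bijection $\eta$ yields exactly the coefficient identity above.

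The one point needing care is that the case split in the definition of $\MV(\beta,\alpha)$ matches the types of $\mathcal{W}$-trajectories in Proposition~\ref{prop}. In particular, for mixed pairs with no MV trajectories (for instance $\beta\in\Crit^{\mathcal{W}_{\bar{A}}}_q(\bar{A})$ and $\alpha$ coming from $\bar{B}$ or $\overline{A\cap B}$), we must check that $\Gamma$ is also empty. This follows from Proposition~\ref{prop}: a trajectory starting in $\bar{A}$ stays in $\bar{A}$, and a third-type trajectory ends in $\mathbb{P}_{int}(\overline{A\cap B})$. So both coefficients are zero in these cases. I expect this bookkeeping, together with the degenerate case $q=1$ (where $D_0$ has no $\overline{A\cap B}$ part), to be the only real obstacle.

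Finally, the isomorphism of chain complexes gives $H^{\mathcal{W}}_{\#}(\widetilde{X},\mathbb{Z})\cong H^{\mathcal{D}}_{\#}(X)$. The complex $C^{\mathcal{W}}_{\#}(\widetilde{X})$ is already known to be a chain complex, since $\mathcal{W}$ is a gradient vector field and Theorem~\ref{hom} applies. Transporting $\partial^2=0$ through the isomorphism therefore shows that $\mathcal{D}_{\#}(X)$ is a chain complex too, which is exactly what Theorem~\ref{main} requires.
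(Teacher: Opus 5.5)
Your proposal is correct and is essentially the paper's proof. Both reduce the chain-map property to the coefficientwise identity $\sum_{P\in\MV(\beta,\alpha)}w_M(P)=\sum_{Q\in\Gamma(g_q(\beta),g_{q-1}(\alpha))}w(Q)$, using the bijection $\eta$ of Lemma~\ref{bij} and the weight equality of Lemma~\ref{weight}. The only difference is cosmetic: the paper verifies $\partial^{\mathcal{D}}_q\circ f_q=f_{q-1}\circ\partial^{\mathcal{W}}_q$ by writing $\alpha=f_{q-1}(g_{q-1}(\alpha))$, while you verify the equivalent identity for $g_{\#}=f_{\#}^{-1}$. Your closing observation that $\partial^{\mathcal{D}}\circ\partial^{\mathcal{D}}=0$ follows by transport through $g_{\#}$ is left implicit in the paper and is worth keeping.
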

\begin{proof}
		
	 We have already proved earlier that an isomorphism $f_{\#}$ exists between $C_{\#}^{\mathcal{W}}(\widetilde{X}, \mathbb{Z})$ and $\mathcal{D}_{\#}(X)$. Now all we need to prove is that $f_{\#}$ is a chain map, i.e., the following diagram commutes.
	
	\[\begin{tikzcd}
		\cdots & {C_{q}^{\mathcal{W}}(\widetilde{X}, \mathbb{Z})} && {C_{q-1}^{\mathcal{W}}(\widetilde{X}, \mathbb{Z})} & \cdots \\
		\\
		\cdots & {\mathcal{D}_{q}(X)} && {\mathcal{D}_{q-1}(X)} & \cdots
		\arrow[from=1-1, to=1-2]
		\arrow["{\partial_q^{\mathcal{W}}}", from=1-2, to=1-4]
		\arrow["{f_q}"', from=1-2, to=3-2]
		\arrow[from=1-4, to=1-5]
		\arrow["{f_{q-1}}"', from=1-4, to=3-4]
		\arrow[from=3-1, to=3-2]
		\arrow["{\partial^{\mathcal{D}}_{q-1}}"', from=3-2, to=3-4]
		\arrow[from=3-4, to=3-5]
		\arrow[draw=none, from=1-2, to=3-4, "{\text{\Huge$\circlearrowleft$}}" description]
	\end{tikzcd}\]

	We will show that $\partial_{q-1}^{\mathcal{D}} \circ f_q = f_{q-1} \circ  \partial_q^{\mathcal{W}} $, for each $q \geq 0$. It suffices to show that this holds for the generators. 
	
	 Let $\tau \in \Crit_q^{\mathcal{W}}(\widetilde{X})$ and let $f_q(\tau)=\beta$. Now,
	\[
		\begin{aligned}
			\partial_q^{\mathcal{D}}(f_q(\tau)) &= \partial_q^{\mathcal{D}}(\beta)\\
			&=\sum_{\alpha \in D_{q-1}(X)}\left(\sum_{P \in \MV(\beta, \alpha)}w_M(P)\right)\alpha\\
			&=\sum_{\alpha \in D_{q-1}(X)}\left(\sum_{P \in \MV(\beta, \alpha)}w_M(P)\right)f_{q-1}(g_{q-1}(\alpha))\\
			&= f_{q-1}\left(\sum_{\alpha \in D_{q-1}(X)}\left(\sum_{P \in \MV(\beta, \alpha)}w_M(P)\right)g_{q-1}(\alpha)\right).
		\end{aligned}
	\]
	
	 Let $g_{q-1}(\alpha)=\sigma$ and let $\eta(P)=P'$, for each $P \in \MV(\beta, \alpha)$. Now, we know from \autoref{bij}  that $\eta$ is a bijection between $\MV(\beta, \alpha)$ and $\Gamma(\tau, \sigma) $. Also, from \autoref{weight}, $w_M(P)=w(\eta(P))$. Thus, we can write,
	\[
		\begin{aligned}
			\partial_q^{\mathcal{D}}(f_q(\beta)) &= f_{q-1}\left(\sum_{\sigma \in \Crit_{q-1}^{\mathcal{W}}(\widetilde{X})}\left(\sum_{\eta(P) \in \Gamma(\tau, \sigma)}w(\eta(P)\right)\sigma \right) \text{, since } g_{q-1} \text{ is an isomorphism} .\\
			&= f_{q-1}\left( \sum_{\sigma \in \Crit_{q-1}^{\mathcal{W}}(\widetilde{X})}\left(\sum_{P' \in \Gamma(\tau, \sigma)}w(P')\right)\sigma\right) \text{, where $\eta(P)=P'$. } \\ &= f_{q-1}(\partial_q^{\mathcal{W}}(\tau)).
		\end{aligned}	
	\]
	
	 Hence the result follows.
	
\end{proof}

\begin{proof}[Proof of \autoref{main}]
	We know from Theorem~\ref{homeq} that, $H_{\#}(X) \cong H_{\#}(\widetilde{X}, \mathbb{Z})$. Therefore, from \autoref{hom} and \autoref{new}, it follows that $H_{\#}(\widetilde{X}, \mathbb{Z}) \cong H_{\#}^{\mathcal{W}}(\widetilde{X}, \mathbb{Z}) \cong H_{\#}^{\mathcal{D}}(X)$. Hence $H_{\#}(X) \cong H^{\mathcal{D}}_{\#}(X)$.
\end{proof}

		
		\end{section}
		
		\bibliographystyle{plain}
		\bibliography{ref.bib}

\end{document}